\documentclass[twoside,12pt]{article}



\usepackage{amssymb, amsmath}
\usepackage{mathtools}
\usepackage[thmmarks,amsmath]{ntheorem}

\usepackage[OT1]{fontenc}

\usepackage[utf8]{inputenc}
\usepackage[T1]{fontenc}
\usepackage{dsfont}
\usepackage{latexsym} 

\usepackage{enumerate}	
\usepackage{footnpag} 
\usepackage{makeidx}  

\usepackage{geometry}	
\geometry{a4paper,left=35mm,right=25mm, top=3cm, bottom=3cm}

\usepackage{parcolumns}

\usepackage{graphicx}
\usepackage{color} 

\usepackage[all]{xy}
\usepackage{url}

\usepackage{mdwlist} 

\newcommand{\Lim}[1]{\raisebox{0.5ex}{\scalebox{0.8}{$\displaystyle \lim_{#1}\;$}}}

\DeclareTextFontCommand{\textcyr}{\cyr}



\def\Span{\mathop{\textnormal{span}\hspace*{-.7mm}}}   
\renewcommand{\geq}{\geqslant}	
\renewcommand{\leq}{\leqslant}
\def\mid|{\hs\middle|\hs}
\def\hs{\hspace*{0.1cm}}



\def\1{\mathbb{1}}
\newcommand{\sub}{\subseteq}	
\renewcommand{\1}{\mathds{1}}					

\newcommand{\B}{\mathcal{B}}

\newcommand{\I}{\mathcal{I}}

\newcommand{\NN}{\mathbb{N}}	
\newcommand{\RR}{\mathbb{R}} 
\newcommand{\ZZ}{\mathbb{Z}} 

\def\t{\textnormal}

\newcommand{\leer}{\varnothing}
\newcommand{\ohne}{\backslash}


\def\c{\cite }
\newcommand{\hide}[1]{} 


\def\eps{\varepsilon}
\def\phi{\varphi}
\def\rho{\varrho}

\DeclareMathAlphabet\mathbfcal{OMS}{cmsy}{b}{n}


\def\dd{^\textnormal{dd}}
\newcommand{\Sup}{\vee}		
\newcommand{\Inf}{\wedge}	%

\let\int\relax 
\DeclareMathOperator{\int}{int}

\renewcommand{\theequation}{\arabic{equation}}


\usepackage{enumitem}
\setenumerate[0]{label=(\roman*)}
\setenumerate[2]{label=(\alph*)}

\newcommand*{\smallmath}[2][4]{\scalebox{#1}{$#2$}}%
%


\newcounter{Zaehler}
\setcounter{Zaehler}{1}

\theoremstyle{plain}
\theoremheaderfont{\normalfont\bfseries}
\theorembodyfont{\itshape}
\theoremseparator{.} 
\theorempreskip{\topsep}
\theorempostskip{\topsep} 
\theoremindent0cm
\theoremnumbering{arabic}
\theoremsymbol{}

\newtheorem{theorem}{Theorem}
\newtheorem{lemma}[theorem]{Lemma}
\newtheorem{corollary}[theorem]{Corollary}
\newtheorem{proposition}[theorem]{Proposition}
\newtheorem{definition}[theorem]{Definition}			
%
%

\theoremstyle{plain}
\theoremheaderfont{\normalfont\bfseries}
\theorembodyfont{\upshape}
\theoremseparator{.} 
\theorempreskip{\topsep}
\theorempostskip{\topsep} 
\theoremindent0cm
\theoremnumbering{arabic}
\theoremsymbol{}

\newtheorem{example}[theorem]{Example}
\newtheorem*{remark}{Remark}

\makeatletter
\newcommand{\eqnum}{\leavevmode\hfill\refstepcounter{equation}\textup{\tagform@{\theequation}}}
\makeatother


\theoremstyle{nonumberplain}
\theoremheaderfont{\itshape}
\theorembodyfont{\normalfont}
\theoremseparator{.} 
\theorempreskip{\topsep}
\theorempostskip{\topsep} 
\theoremindent0cm
\theoremsymbol{\ensuremath\square} 
\qedsymbol{$\Box$}

\newtheorem{proof}{Proof}

\theoremstyle{empty}   
\theorembodyfont{\itshape}
\theoremseparator{.} 
\theorempreskip{\topsep}
\theorempostskip{\topsep} 
\theoremindent0cm

\setlength{\itemsep}{0mm} 
\setlength{\parsep}{1mm} 
\pagenumbering{arabic}
\setcounter{page}{1}
\sloppy
\parindent 0pt				


\begin{document}
\title{Projection bands and atoms in pervasive\\ pre-Riesz spaces}
\author{Anke Kalauch\footnote{FR Mathematik, Institut für Analysis, TU Dresden, 01062 Dresden, Germany,\newline \texttt{anke.kalauch@tu-dresden.de}},
Helena Malinowski\footnote{Unit for BMI, North-West University, Private Bag X6001, Potchefstroom, 2520, South Africa,\newline \texttt{lenamalinowski@gmx.de}}}
\date{\today}
\maketitle
\begin{abstract}
In vector lattices, the concept of a projection band is a basic tool. We deal with projection bands in the more general setting of an Archimedean pre-Riesz space $X$. We relate them to projection bands in a vector lattice cover $Y$ of $X$.  If $X$ is pervasive, then a projection band in $X$ extends to a projection band in $Y$, whereas the restriction of a projection band $B$ in $Y$ is not a projection band in $X$, in general. We give conditions under which the restriction of $B$ is a projection band in $X$.
We introduce atoms and discrete elements in $X$ and show that every atom is discrete. The converse implication is true, provided $X$ is pervasive. 
In this setting,  
we link atoms in $X$ to atoms in $Y$. 
If 
$X$ contains an atom $a>0$, we show that the principal band generated by $a$ is a projection band.
Using atoms in a finite dimensional Archimedean pre-Riesz space $X$, we establish that $X$ is pervasive if and only if it is a vector lattice. 
\end{abstract}

\textbf{Keywords:} order projection, projection band, band projection, principal band, atom, discrete element, extremal vector, pervasive, weakly pervasive, Archimedean directed ordered vector space, pre-Riesz space, vector lattice cover

\textbf{Mathematics Subject Classification (2010):} 46A40, 06F20, 47B65


\section{Introduction}
Projection bands and atoms are both fundamental concepts in the vector lattice theory.
For an Archimedean vector lattice $Y$, a band $B$ in $Y$ is called a \emph{projection band} if $Y=B\oplus B^{\t{d}}$, where $B^{\t{d}}$ is the disjoint complement of $B$. The band $B$ is a projection band if and only if there exists an order projection onto $B$, i.e.\ a positive linear operator $P\colon Y\rightarrow Y$ with $P^2=P$ and $P(Y)= B$. 
 Moreover, for every $y\in Y_+$ we have $0\leq P(y) \leq y$, see, e.g.,  \c[p.~133~ff.]{Zaa1}.
 Projections bands can similarly be introduced in pre-Riesz spaces using the notion of a band in \c{1}. Pre-Riesz spaces are precisely those partially ordered vector spaces that can be order densely embedded into vector lattices, their so-called vector lattice covers; see \cite{vanHaa}. In \cite{14} it is shown that the relation between projection bands and order projections in pre-Riesz spaces is similar to the one in vector lattices. 
 
 Atoms play an important role in the investigation of cones in vector lattices and finite dimensional ordered vector spaces, see, e.g., \c[Section~1.6]{AliTou} and \c[Section~26]{Zaa1}.
We call a strictly positive element $a$ of an ordered vector space $X$ an \emph{atom}, if for every $x\in X$ with $0\leq x<a$ there is $\lambda\in\RR$ with $x=\lambda a$.
In an Archimedean vector lattice $Y$ a positive element $a$ is an atom if and only if $a$ is a discrete element, i.e.\ for every pair of disjoint elements $u,v\in Y$ with $0\leq u\leq a$ and $0\leq v\leq a$ it follows $u=0$ or $v=0$. 
The notion of an atom as well as the notion of a discrete element can be 
generalized to pre-Riesz spaces, since one can define disjointness in pre-Riesz spaces according to \c{1}. 

For the investigation of structures in pre-Riesz spaces,  
the approach to use vector lattice covers and the restriction and extention method described in \cite[Section 2.8]{PRS} turned out to be fruitful. The techniques used in the present paper follow the same spirit.
We deal with the basic question how projection bands in a pre-Riesz space are related to projection bands in a corresponding vector lattice cover. Furthermore, we investigate under which conditions atoms and discrete elements in a pre-Riesz space coincide. We also study the problem how atoms in a pre-Riesz space and atoms in a corresponding vector lattice cover are linked.

In the vector lattice theory, the following statement is well-known, see \c[Theorem~26.4]{Zaa1}.
\begin{theorem}\label{LuxZahn}
Let $Y$ be an Archimedean vector lattice. If $a \in Y$ is an atom, then the principal band $\B_a$ generated by $a$ consists of all real multiples of $a$, and $\B_a$ is a projection band.
\end{theorem}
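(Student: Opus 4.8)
The plan is to prove both assertions simultaneously by establishing the band decomposition $Y = L \oplus L^{\t d}$, where $L := \RR a$ is the one-dimensional subspace spanned by $a$. Throughout I use two elementary consequences of the hypotheses: since $a>0$ is an atom, the map $\mu\mapsto\mu a$ is an order isomorphism of $\RR$ onto $L$, and the atom property scales to
\begin{equation*}
0\leq u\leq \mu a\ (\mu\geq 0)\quad\Longrightarrow\quad u\in L .
\end{equation*}
Moreover $L\cap L^{\t d}=\{0\}$, because $w=\mu a$ with $w\perp a$ forces $|\mu|\,(a\wedge a)=|\mu|\,a=0$, hence $\mu=0$; and note $L^{\t d}=\{a\}^{\t d}$.

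The heart of the argument is a decomposition of an arbitrary $y\in Y_+$. First I would set $u_n:=y\wedge na$ for $n\in\NN$. By the scaled atom property each $u_n$ lies in $L$, say $u_n=\lambda_n a$ with $\lambda_n\geq 0$, and $(\lambda_n)$ is increasing because $(u_n)$ is. Since $\lambda_n a=u_n\leq y$ for all $n$, the Archimedean property rules out $\lambda_n\to\infty$ (otherwise $ka\leq y$ for every $k\in\NN$, forcing $a\leq 0$), so $\lambda:=\sup_n\lambda_n$ is finite. I would then verify that $\sup_n u_n=\lambda a$ holds in $Y$: any upper bound $z$ of $\{u_n\}$ satisfies $0\leq z\wedge\lambda a\leq\lambda a$ and $z\wedge\lambda a\geq u_n$ for all $n$, so the scaled atom property writes $z\wedge\lambda a=\mu a$ with $\lambda_n\leq\mu\leq\lambda$, whence $\mu=\lambda$ and $\lambda a\leq z$. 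Setting $v:=y-\lambda a$, the relation $\lambda a=\sup_n u_n\leq y$ gives $v\geq 0$, and a short computation shows $v\perp a$: if $v\wedge a=\rho a$ with $\rho>0$, then $y\geq(\lambda+\rho)a$ would force $u_n\geq(\lambda+\rho)a$ for large $n$, contradicting $\lambda_n\leq\lambda$. Thus $y=\lambda a+v$ with $\lambda a\in L$ and $v\in L^{\t d}$.

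Since $Y$ is directed and $L,L^{\t d}$ are subspaces, this extends from $Y_+$ to all of $Y$, giving $Y=L+L^{\t d}$, and the sum is direct by $L\cap L^{\t d}=\{0\}$. From $Y=L\oplus L^{\t d}$ it follows that $L=L^{\t{dd}}$: for $w\in L^{\t{dd}}$, writing $w=w_1+w_2$ with $w_1\in L$, $w_2\in L^{\t d}$, the element $w_2=w-w_1$ lies in $L^{\t{dd}}\cap L^{\t d}$, hence is self-disjoint, so $w_2=0$ and $w\in L$. As disjoint complements are always bands, $L=L^{\t{dd}}$ is a band with $Y=L\oplus L^{\t d}$, i.e.\ a projection band. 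Finally, $L$ is a band containing $a$, so the principal band $\B_a$ (the smallest band containing $a$) satisfies $\B_a\subseteq L$; since $\B_a$ is a subspace containing $a$ we also have $L=\RR a\subseteq\B_a$, and therefore $\B_a=L=\RR a$. I expect the main obstacle to be the construction of the $L$-component $\lambda a$, namely the combination of the boundedness of $(\lambda_n)$ (where the Archimedean property is genuinely needed), the identification $\sup_n u_n=\lambda a$ in $Y$, and the disjointness $v\perp a$; everything else is bookkeeping with the atom property.
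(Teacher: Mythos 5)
Your proof is correct. Note first that the paper itself gives no proof of Theorem~\ref{LuxZahn}: it is quoted from \cite[Theorem~26.4]{Zaa1}, and the paper's own contribution is the generalization to Archimedean pervasive pre-Riesz spaces via Lemma~\ref{atomic.7} and Theorems~\ref{atomic.8} and \ref{atomic.9}. Your argument is essentially the classical vector lattice proof, and it differs instructively from the paper's generalized one. You construct the band component directly by truncation, $u_n := y\wedge na$, identify $u_n=\lambda_n a$ by the scaled atom property, bound $(\lambda_n)$ by Archimedeanity, verify $\sup_n u_n=\lambda a$, and extract the disjoint remainder $v=y-\lambda a$ — all steps check out, including the upper-bound argument for $\sup_n u_n$ and the contradiction showing $v\perp a$. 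The paper cannot argue this way in a pre-Riesz space, since $y\wedge na$ need not exist there; instead it imports the existence of the splitting $x=\lambda a+(x-\lambda a)$ from Vulikh (Proposition~\ref{prelim.19.6}, applied in the vector lattice cover), runs a four-case analysis in Lemma~\ref{atomic.7}, and then must establish uniqueness of the decomposition for arbitrary elements through a separate compatibility-with-addition argument (Theorem~\ref{atomic.9}, Case 2), whereas in your setting uniqueness is immediate from $L\cap L^{\t{d}}=\{0\}$ and the directness of the sum. So your route buys a short, self-contained proof that yields both assertions ($\B_a=\RR a$ and the projection property) at once from the single decomposition $Y=L\oplus L^{\t{d}}$ together with $L=L^{\t{dd}}$; the paper's route buys generality at the cost of that machinery. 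Two cosmetic points: in your verification of $L\cap L^{\t{d}}=\{0\}$ the correct computation is $|\mu a|\wedge a=\min\{|\mu|,1\}\,a$, not $|\mu|\,a$, though the conclusion $\mu=0$ is unaffected; and since the vector lattice definition of an atom allows $a<0$, you should add the one-line normalization that $\B_a=\B_{-a}$ and $-a$ is then an atom, so one may assume $a>0$ as you do (consistent with the paper's convention that atoms are positive).
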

Theorem~\ref{LuxZahn} states that $Y$ admits the decomposition $Y=\B_a\oplus\B_a^{\t{d}}$. We investigate under which conditions  a similar statement is valid in pre-Riesz spaces.
It turns out that pervasive pre-Riesz spaces play a crucial role.

The paper is organized as follows.
In Section~\ref{preliminaries} all preliminaries are listed.

In Section~\ref{projectionbands0}
we deal with basic properties of band projections and projection bands in a pre-Riesz space $X$.
We show that every band projection is order continuous. 
If $X$ is a vector lattice and $B,C\sub X$ are ideals with $X=B\oplus C$, then $B$ is a projection band, see \cite[Theorem 1.41]{PosOp}. We give an example that this statement is not true in pre-Riesz spaces, even if $B$ and $C$ are bands. We give two different sets of conditions such that the statement is satisfied.

Section~\ref{projectionbands} is devoted to the restriction and extension of projection bands. 
If $X$ is pervasive, then a projection band in $X$ extends to a projection band in a vector lattice cover $Y$ of $X$; see Theorem \ref{extideal_extband_coincide}. 
In Theorem~\ref{projband_restrictable} we give conditions such that the restriction of a projection band in $Y$ is a projection band in $X$. This implication is not true, in general.

In Section~\ref{atoms} we introduce atoms and discrete elements in ordered vector spaces. We show that in a pre-Riesz space every atom is a discrete element and that the converse is not true, in general. In Theorem~\ref{atomic.4} we first establish that in an Archimedean pervasive pre-Riesz space $X$ atoms and discrete elements coincide. Moreover, atoms in $X$ correspond to atoms in $Y$.

In Section~\ref{fin_dim} we deal with finite-dimensional spaces. Using the theory of atoms, we characterize finite-dimensional Archimedean pervasive pre-Riesz spaces. In Theorem~\ref{atomic.101a} we show that these spaces are precisely the vector lattices.

In Section~\ref{principal_bands} we consider principal bands generated by atoms in an Archimedean pervasive pre-Riesz space $X$. We show that the ideal $\mathcal{I}_a$ generated by an atom $a\in X$ and the band $\B_a$ coincide. In Theorem~\ref{atomic.9} we generalize Theorem~\ref{LuxZahn} and show that every Archimedean pervasive pre-Riesz space admits the decomposition $X=\B_a \oplus \B_a^{\t{d}}$, provided $a\in X$ is an atom. We conclude that -- similar to the vector lattice case -- there exists an order projection onto the band $\B_a$.

\section{Preliminaries}\label{preliminaries}
We list some basic terminology in partially ordered vector spaces.
Let $X$ be a real vector space and let $X_+$ be a \emph{cone} in $X$, that is,  $X_+$ is a wedge ($x,y\in X_+$ and $\lambda,\mu\geq 0$ imply $\lambda x + \mu y \in X_+$) and $X_+ \cap (-X_+) =\left\{0\right\}$. In $X$ a partial order is defined by $x\leq y$ whenever $y-x\in X_+$. 
The space $(X,X_+)$ (or, loosely, $X$) is then called a (\emph{partially}) \emph{ordered vector space}.  For a linear subspace $D$ of $X$ we consider in $D$ the order induced from $X$, i.e.\ we set $D_+:=D\cap X_+$. A non-empty convex subset $F$ of a cone $X_+$ is called a \emph{face} if $\alpha x + (1-\alpha)y \in F$ with $x,y\in X_+$ and $0<\alpha<1$ imply $x,y\in F$. For a subset $A$ of $X$ and $\lambda\in\RR$ we use the notations $A+x:=\left\{a+x\mid| a\in A\right\}$ and $\lambda A :=\left\{\lambda a \mid| a\in A\right\}$. The \emph{positive-linear hull} of $A\sub X$ is given by
\[\textnormal{pos}A:=\left\{ x\in X\mid| \exists n\in\NN, \lambda_i\in\RR_{>0}, x_i\in A, i=1,\ldots, n \t{ with } x=\sum_{i=1}^n \lambda_i x_i\right\}.\]

An ordered vector space $X$ is called \emph{Archimedean} if for every $x,y\in X$ with $nx\leq y$ for every $n\in\NN$ one has $x\leq 0$. Clearly, every subspace of an Archimedean ordered vector space is Archimedean.
A subspace $D\sub X$ is called \emph{directed} if for every $x,y\in D$ there is an element $z\in D$ such that $x\leq z$ and $y\leq z$. An ordered vector space $X$ is directed if and only if  $X_+$ is \emph{generating} in $X$, that is, $X = X_+ - X_+$. The ordered vector space $X$ has the \emph{Riesz decomposition property} (\emph{RDP}) if for every $x_1,x_2,z \in X_+$ with $z \leq x_1+x_2$ there exist $z_1,z_2 \in X_+$ such that $z = z_1+z_2$ with $z_1 \leq x_1$ and $z_2\leq x_2$. The space $X$ has the RDP if and only if for every $x_1,x_2,x_3,x_4\in X$ with $x_1,x_2 \leq x_3,x_4$ there exists $z\in X$ such that $x_1, x_2 \leq z \leq x_3, x_4$. An element $u\in X_+$ is called an \emph{order unit}, if for every $x\in X$ there is $\lambda\in\RR_{>0}$ with $-\lambda u \leq x \leq \lambda u$.
A net $(z_\alpha)_{\alpha\in\mathbb{A}}$ in $X$  is \emph{decreasing}, in symbols $z_\alpha\downarrow$, if for every $\alpha,\beta\in\mathbb{A}$ with $\alpha\leq\beta$ we have $z_\alpha\geq z_\beta$. We write $z_\alpha\downarrow 0$ if $z_\alpha\downarrow$ and $\inf\left\{z_\alpha\mid|\alpha\in\mathbb{A}\right\}=0$.
A net $(x_\alpha)_\alpha$ in $X$ \emph{order converges} to $x\in X$, in symbols $x_\alpha\xrightarrow{o}x$, if there is a net $(z_\alpha)_\alpha$ in $X$ with $z_{\alpha}\downarrow 0$ and $\pm(x-x_\alpha)\leq z_\alpha$ for every $\alpha$.

For an ordered vector space $(X,X_+)$, a linear operator $T\colon X\rightarrow X$ is called \emph{positive}, if $T(X_+)\sub X_+$. The operator $T$ is positive if and only if for every $x,z\in X$ the relation $x\leq z$ implies $T(x)\leq T(z)$. If $T$ is positive, we write $T\geq 0$. If $X$ is directed, then this introduces a partial order on the space $L(X)$ of linear operators on $X$. An operator $T\in L(X)$ is \emph{order continuous}\footnote{There are several alternative definitions of order convergence of nets in partially ordered vector spaces. However, by \c[Theorem~4.4]{10} an operator $T$ is order continuous if and only if $T$ is continuous with respect to any of these alternative notions.} if for every net $(x_\alpha)_\alpha$ in $X$ with $x_\alpha\xrightarrow{o}x\in X$ we have $T(x_\alpha)\xrightarrow{o}T(x)$.
We recall the following characterization from \c[Lemma~7]{04}. 
\begin{lemma}\label{charOCts}
Let $X$ and $Z$ be ordered vector spaces, $Z$ Archimedean, and $T\colon X\rightarrow Z$ a positive operator. Then the following are equivalent.
\begin{enumerate}
\itemsep0em
\item $T$ is order continuous.
\item For every net $(x_\alpha)_\alpha$ in $X$ with $x_\alpha\downarrow 0$ it follows $T(x_\alpha)\downarrow 0$.
\end{enumerate}
\end{lemma}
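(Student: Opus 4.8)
The plan is to verify the two implications directly from the definition of order convergence, using only positivity and linearity of $T$ together with the greatest-lower-bound property of infima. The Archimedean assumption on $Z$ will not be needed to transport the witnessing nets through $T$; its role is rather to keep the order-convergence notion on the target space unambiguous, so that the argument matches the characterization recorded in the footnote.

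For (ii) $\Rightarrow$ (i), I would start from a net $(x_\alpha)_\alpha$ with $x_\alpha \xrightarrow{o} x$ and fix a witnessing net $(z_\alpha)_\alpha$ with $z_\alpha \downarrow 0$ and $-z_\alpha \leq x - x_\alpha \leq z_\alpha$ for every $\alpha$. Applying the positive operator $T$ to this sandwich and using linearity gives $-T(z_\alpha) \leq T(x) - T(x_\alpha) \leq T(z_\alpha)$, that is, $\pm\bigl(T(x)-T(x_\alpha)\bigr) \leq T(z_\alpha)$ for every $\alpha$. Since $z_\alpha \downarrow 0$, condition (ii) yields $T(z_\alpha)\downarrow 0$, so $(T(z_\alpha))_\alpha$ is precisely the dominating net required to conclude $T(x_\alpha)\xrightarrow{o}T(x)$. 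Hence $T$ is order continuous.

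For (i) $\Rightarrow$ (ii), I would take a net with $x_\alpha \downarrow 0$ and first observe that it order converges to $0$: the net $(x_\alpha)_\alpha$ witnesses $x_\alpha \xrightarrow{o} 0$ by itself, because $x_\alpha \geq 0$ (as $0$ is a lower bound of $\{x_\alpha\}$) gives $\pm(0-x_\alpha)\leq x_\alpha$. Order continuity then provides $T(x_\alpha)\xrightarrow{o}T(0)=0$, so there is a net $(w_\alpha)_\alpha$ with $w_\alpha\downarrow 0$ and $T(x_\alpha)\leq w_\alpha$ for all $\alpha$. Monotonicity of the image is immediate, since $T$ is positive and $(x_\alpha)_\alpha$ decreasing, so $(T(x_\alpha))_\alpha$ is decreasing, and $T(x_\alpha)\geq 0$ shows $0$ is a lower bound.

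The step I expect to require the only real care is upgrading this order convergence to the monotone statement $T(x_\alpha)\downarrow 0$, i.e.\ verifying $\inf_\alpha T(x_\alpha)=0$. For this I would take an arbitrary lower bound $v$ of $\{T(x_\alpha)\}$ and chain $v \leq T(x_\alpha) \leq w_\alpha$ to see that $v$ is a lower bound of $\{w_\alpha\}$; since $w_\alpha \downarrow 0$ means $0$ is the greatest lower bound of $\{w_\alpha\}$, this forces $v\leq 0$, whence $0=\inf_\alpha T(x_\alpha)$ and $T(x_\alpha)\downarrow 0$. I would also check at this stage that a single index set is used throughout, since the same-index dominating-net formulation of order convergence given above is exactly what permits the witnessing nets to be pushed through $T$ without any reindexing.
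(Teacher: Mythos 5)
Your proof is correct and complete under the paper's definition of order convergence (same index set, dominating net $z_\alpha\downarrow 0$ with $\pm(x-x_\alpha)\leq z_\alpha$); the paper itself gives no proof of this lemma but recalls it from \cite[Lemma~7]{04}, and your two implications---pushing the witnessing net through $T$ by positivity, and the lower-bound chain $v\leq T(x_\alpha)\leq w_\alpha$ to upgrade $T(x_\alpha)\xrightarrow{o}0$ to $T(x_\alpha)\downarrow 0$---constitute exactly the standard argument. Your remark about the Archimedean hypothesis is also accurate: it is not used in this formulation, and as the paper's footnote indicates (via \cite[Theorem~4.4]{10}), it enters only to make order continuity independent of the particular notion of order convergence chosen.
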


For standard notations in the case that $X$ is a vector lattice see \c{PosOp}. We will use the following simple observation.
\begin{lemma}\label{prelim.0}
Let $X$ be a vector lattice.
\begin{enumerate}
\itemsep0em
\item\label{prelim.0.it1} Let $a,b,c\in X_+$ and $a\perp b$. Then $(a+b)\Inf c = a\Inf c + b\Inf c$.
\item\label{prelim.0.it2} Let $x\geq 0$ be such that $x=x_1+x_2$ with $x_1\perp x_2$. Then $x_1,x_2\geq 0$.
\end{enumerate}
\end{lemma}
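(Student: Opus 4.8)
The plan is to treat the two parts separately, in each case reducing to the basic lattice identities available in every vector lattice.

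For (i), I would first record that disjointness of $a,b\in X_+$ means $a\wedge b=0$, so the general identity $a+b=(a\vee b)+(a\wedge b)$ collapses to $a+b=a\vee b$. Applying the distributive law $(a\vee b)\wedge c=(a\wedge c)\vee(b\wedge c)$, valid in any vector lattice, gives $(a+b)\wedge c=(a\wedge c)\vee(b\wedge c)$. It then remains to turn this join back into a sum. The elements $a\wedge c$ and $b\wedge c$ are again disjoint, since $0\leq(a\wedge c)\wedge(b\wedge c)\leq a\wedge b=0$; hence their meet vanishes and the same identity $u+v=(u\vee v)+(u\wedge v)$ yields $(a\wedge c)\vee(b\wedge c)=(a\wedge c)+(b\wedge c)$. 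Combining the two displays proves (i). No genuine obstacle arises here; the only point to watch is that all four elements are positive, so that disjointness is just vanishing of the meet.

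For (ii), the key is to show $x_1^-=0$ and, symmetrically, $x_2^-=0$, where $x^-=(-x)\vee 0$. From $x=x_1+x_2\geq 0$ I would write $-x_1=x_2-x\leq x_2\leq|x_2|$; together with $0\leq|x_2|$ this shows that $|x_2|$ dominates both $-x_1$ and $0$, so $x_1^-=(-x_1)\vee 0\leq|x_2|$. Since also $x_1^-\leq|x_1|$, I obtain $0\leq x_1^-\leq|x_1|\wedge|x_2|=0$, using that $x_1\perp x_2$ means precisely $|x_1|\wedge|x_2|=0$. Hence $x_1^-=0$, i.e.\ $x_1\geq 0$, and the same argument with the roles of $x_1$ and $x_2$ exchanged gives $x_2\geq 0$.

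The main thing to get right is the reading of disjointness: for the not-necessarily-positive elements in (ii) it must be interpreted as $|x_1|\wedge|x_2|=0$, whereas in (i) it coincides with $a\wedge b=0$ because $a,b\geq 0$. Once this is fixed, both parts are short consequences of the standard lattice identities, and I expect no substantial difficulty. The only real choice is the cleanest route for (ii): the argument above avoids the identity $|x_1+x_2|=|x_1|+|x_2|$ and relies solely on the definition of $x^-$ and the monotonicity of $\vee$.
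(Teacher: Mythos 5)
Your proof is correct. For part (i) you follow essentially the same route as the paper: disjointness gives $a+b=a\vee b$, the distributive law turns $(a+b)\wedge c$ into $(a\wedge c)\vee(b\wedge c)$, and the estimate $0\leq(a\wedge c)\wedge(b\wedge c)\leq a\wedge b=0$ converts the join back into a sum; you even spell out this last disjointness check, which the paper dispatches with a ``similarly''. For part (ii), however, you take a genuinely different route. The paper invokes two identities for disjoint elements, namely $|x_1+x_2|=|x_1-x_2|$ and $|x_1+x_2|\vee|x_1-x_2|=|x_1|+|x_2|$, to conclude $x=|x_1|+|x_2|$ and then reads off $x_2=(|x_1|-x_1)+|x_2|\geq 0$. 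You instead squeeze the negative part directly: from $-x_1=x_2-x\leq x_2\leq|x_2|$ and $0\leq|x_2|$ you get $x_1^-=(-x_1)\vee 0\leq|x_2|$, and combined with $x_1^-\leq|x_1|$ this gives $0\leq x_1^-\leq|x_1|\wedge|x_2|=0$, whence $x_1\geq 0$. Your argument is more elementary, using only the definition of $x^-$, monotonicity of the lattice operations, and the reading $x_1\perp x_2$ as $|x_1|\wedge|x_2|=0$, and it avoids the two cited theorems from the positive-operators literature; the paper's computation has the minor side benefit of producing the formula $x=|x_1|+|x_2|$ along the way. You also correctly flag the one point of care, namely that disjointness for the possibly non-positive $x_1,x_2$ must be read via absolute values, while for $a,b\geq 0$ it reduces to $a\wedge b=0$.
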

\begin{proof}
\ref{prelim.0.it1}: Due to $a\Inf b=0$, by  \c[Theorem~1.3(2)]{PosOp} we have $a+b  = a\Sup b$. Similarly, $(a\Inf c)\perp (b\Inf c)$ implies $(a\Inf c) + (b\Inf c) = (a\Inf c)\Sup(b\Inf c)$. Thus, using \c[Proposition~1.1.2]{MeyNie} in the second step of the following equation, we obtain
\[(a+b)\Inf c = (a\Sup b)\Inf c = (a\Inf c)\Sup (b\Inf c) = (a\Inf c)+ (b\Inf c).\]
\ref{prelim.0.it2}: Due to $|x_1|\Inf|x_2|=0$, by \c[Theorem~1.7(5)]{PosOp} we have $|x_1+x_2|=|x_1-x_2|$. Since $x$ is positive, we get $x=x_1+x_2=|x_1+x_2|$. By \c[Theorem~1.7(7)]{PosOp} we obtain $x=|x_1+x_2|\Sup |x_1-x_2| = |x_1|+|x_2|$. It follows $x_1+x_2 =x= |x_1|+|x_2|$ and therefore $x_2=(|x_1|-x_1)+|x_2|\geq 0$. Analogously, $x_1\geq 0$.
\end{proof}
The next proposition is from \c[Proposition~III.10.1~b)]{Vulikh_en}.
\begin{proposition}\label{prelim.17bb}
Let $Y$ be an Archimedean vector lattice, $y\in Y_+$ and $\Lambda\sub\RR$ a bounded set. Then we have 
$\sup \left\{\lambda y\mid| \lambda\in\Lambda \right\} = (\sup \Lambda) y$ and $\inf\left\{\lambda y\mid| \lambda\in\Lambda \right\} = (\inf \Lambda) y$.
\end{proposition}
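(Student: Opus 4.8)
The plan is to prove the supremum identity directly and then deduce the infimum identity from it by a sign-flip argument. Set $s:=\sup\Lambda$, which exists in $\RR$ since $\Lambda$ is bounded. First I would verify that $sy$ is an upper bound of $\left\{\lambda y\mid|\lambda\in\Lambda\right\}$: for each $\lambda\in\Lambda$ we have $\lambda\leq s$, and because $y\geq 0$ this gives $(s-\lambda)y\geq 0$, i.e.\ $\lambda y\leq sy$.

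The crux is to show that $sy$ is the \emph{least} upper bound, and this is exactly where the Archimedean hypothesis is used. Let $u\in Y$ be an arbitrary upper bound of $\left\{\lambda y\mid|\lambda\in\Lambda\right\}$. Since $s=\sup\Lambda$, for each $n\in\NN$ there is some $\lambda_n\in\Lambda$ with $s-\lambda_n\leq \tfrac1n$. Then $sy=\lambda_n y+(s-\lambda_n)y\leq u+\tfrac1n y$, whence $n(sy-u)\leq y$ for every $n\in\NN$. By the Archimedean property of $Y$ we conclude $sy-u\leq 0$, that is $sy\leq u$. Hence $sy$ is the least upper bound, giving $\sup\left\{\lambda y\mid|\lambda\in\Lambda\right\}=(\sup\Lambda)y$.

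For the infimum I would apply the statement just proved to the bounded set $-\Lambda=\left\{-\lambda\mid|\lambda\in\Lambda\right\}$, combined with the elementary identities $\sup(-\Lambda)=-\inf\Lambda$ and $\inf\left\{\lambda y\mid|\lambda\in\Lambda\right\}=-\sup\left\{-\lambda y\mid|\lambda\in\Lambda\right\}$, the latter holding because multiplication by $-1$ is an order-reversing bijection on $Y$. Putting these together yields $\inf\left\{\lambda y\mid|\lambda\in\Lambda\right\}=-\bigl(\sup(-\Lambda)\bigr)y=(\inf\Lambda)y$.

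I expect the only delicate step to be the passage from ``$sy$ is an upper bound that the elements $\lambda_n y$ approach arbitrarily closely'' to ``$sy$ is the least upper bound.'' Since $\Lambda$ need not attain its supremum, $sy$ is in general not a member of the set, so no purely order-theoretic argument suffices; the Archimedean property supplies precisely the missing ingredient by forcing the error term $(s-\lambda_n)y$ to be negligible in the limit.
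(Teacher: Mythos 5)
Your proof is correct. Note that the paper does not prove this proposition at all --- it imports it from Vulikh (Proposition~III.10.1~b) of \emph{Introduction to the Theory of Partially Ordered Spaces}) --- and your argument (checking $sy$ is an upper bound, then using the Archimedean property via the estimate $n(sy-u)\leq y$ obtained from approximating $\sup\Lambda$ by elements $\lambda_n\in\Lambda$, with the infimum identity deduced by applying the supremum case to $-\Lambda$) is precisely the standard textbook proof, so it matches the intended argument; the only implicit hypothesis worth flagging is that $\Lambda$ must be non-empty for $\sup\Lambda$ and your choice of $\lambda_n$ to exist, which the paper's statement tacitly assumes as well.
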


Finite dimensional vector lattices are characterized as follows, see \c[Theorem 1.7.8]{PRS}.
\begin{proposition}\label{Yudin}
Let $(X,K)$ be an $n$-dimensional Archimedean directed ordered vector space. Then the following statements are equivalent.
\begin{enumerate}
\itemsep0em
\item\label{Yudin.it1} $(X,K)$ is a vector lattice.
\item\label{Yudin.it2} $(X,K)$ is order isomorphic to $(\RR^n,\RR^n_+)$.
\item\label{Yudin.it3} $(X,K)$ has the RDP.
\end{enumerate}
\end{proposition}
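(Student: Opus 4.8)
The plan is to prove the cycle (ii)$\Rightarrow$(i)$\Rightarrow$(iii)$\Rightarrow$(ii), with the last implication carrying all the weight. The implication (ii)$\Rightarrow$(i) is immediate: an order isomorphism onto $(\RR^n,\RR^n_+)$ transports the coordinatewise lattice operations back to $X$, so $X$ is a vector lattice. For (i)$\Rightarrow$(iii) I would invoke the Riesz decomposition lemma valid in any vector lattice: given $x_1,x_2,z\in X_+$ with $z\leq x_1+x_2$, set $z_1:=z\wedge x_1$ and $z_2:=z-z_1$. Then $0\leq z_1\leq x_1$, and since $z_2=(z-x_1)^+$ with $z-x_1\leq x_2$ one also gets $0\leq z_2\leq x_2$, which is exactly the RDP.

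The substance lies in (iii)$\Rightarrow$(ii). First I would collect the geometric data of the cone $K$ in $X\cong\RR^n$: it is pointed by the definition of a cone, generating because $X$ is directed, and closed because $X$ is finite-dimensional and Archimedean (a standard fact). A pointed generating closed cone in $\RR^n$ has nonempty interior; dually this yields a strictly positive functional $\phi$ on $X$, and then $B:=\{x\in K: \phi(x)=1\}$ is a compact convex base of $K$ whose affine span is the hyperplane $\phi^{-1}(1)$, so $\dim B=n-1$.

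Next I would show that the RDP forces $B$ to be a simplex. The decisive point is the classical correspondence between decomposition in the cone and the simplex property of the base: the order-interval identity $[0,x_1+x_2]=[0,x_1]+[0,x_2]$ encoded by the RDP is equivalent to every point of $B$ having a \emph{unique} representation as a convex combination of the extreme points of $B$, which is exactly the defining property of a finite-dimensional simplex. An $(n-1)$-dimensional simplex is the convex hull of $n$ affinely independent vertices $v_1,\dots,v_n$; lying on $\phi^{-1}(1)$ and being affinely independent, these are linearly independent in $X$ and satisfy $K=\textnormal{pos}\{v_1,\dots,v_n\}$. The linear map $T\colon X\to\RR^n$ determined by $Tv_i=e_i$ is then a bijection with $T(K)=\RR^n_+$, hence an order isomorphism, which gives (ii).

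The main obstacle is precisely this middle step: extracting the simplex (equivalently, showing $K$ is simplicial) from the RDP. Finite-dimensionality together with closedness is essential here, since the ice-cream (Lorentz) cone in $\RR^3$ is pointed, generating and closed yet fails the RDP and has a non-simplicial circular base; so one cannot hope to obtain a simplex from compactness alone but must genuinely use the decomposition property. Concretely I would realize the equivalence through the uniqueness-of-barycentric-coordinates characterization of simplices, deducing uniqueness of convex decompositions of points of $B$ from the RDP applied in the cone, and conversely.
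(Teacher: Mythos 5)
The paper does not prove this proposition at all: it imports it verbatim from \cite[Theorem 1.7.8]{PRS} (it is Yudin's classical theorem), so there is no in-paper argument to compare against. Measured against the standard proof, your route is exactly the classical one (closed cone, compact base, base is a simplex, simplicial cone $\cong\RR^n_+$), and your peripheral steps are all sound: (ii)$\Rightarrow$(i) is trivial, your (i)$\Rightarrow$(iii) via $z_1:=z\wedge x_1$ and $z_2=z-z_1=(z-x_1)^+\leq x_2$ is the standard Riesz decomposition lemma, and the facts you invoke for the geometry (Archimedean $\Leftrightarrow$ closed in finite dimensions, strictly positive functionals from the interior of the dual cone, compactness of $B$ — for which one should add the one-line recession-cone argument: an unbounded direction $d$ of $B$ would satisfy $d\in K\setminus\{0\}$, $\phi(d)=0$) are correct and standard.

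The one place where your write-up is a promissory note rather than a proof is precisely the step you yourself flag: the claim that the RDP is \emph{equivalent} to uniqueness of convex representations over extreme points of $B$. Note first that you only need one direction, RDP $\Rightarrow$ uniqueness, and it does admit a short direct proof along the lines you sketch, but it requires one ingredient you never state: if $w$ spans an extreme ray of $K$ and $0\leq u\leq \lambda w$, then $u=tw$ for some $t\in[0,\lambda]$ (write $\lambda w=u+(\lambda w-u)$ and use extremality of the ray). Granting this, suppose $b=\sum_i\lambda_i v_i=\sum_j\mu_j w_j$ are two convex combinations of distinct extreme points of $B$; then $\lambda_1 v_1\leq\sum_j\mu_j w_j$, so iterated RDP gives $\lambda_1 v_1=\sum_j u_j$ with $0\leq u_j\leq\mu_j w_j$, the lemma forces $u_j=t_j w_j$, and extremality of the ray through $v_1$ forces every nonzero $t_jw_j$ to be a multiple of $v_1$; comparing $\phi$-values identifies $v_1$ with some $w_{j_0}$ and yields $\lambda_1\leq\mu_{j_0}$, whence by symmetry the two representations coincide. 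The same mechanism (two genuinely different representations of a Radon point) shows the extreme points are affinely independent, hence at most $n$ of them; Minkowski's theorem gives $B=\textnormal{conv}(\textnormal{ext}\,B)$ and $\dim B=n-1$ then forces exactly $n$ vertices, so $B$ is a simplex and your final linear map $Tv_i=e_i$ finishes (ii). So the proposal is correct and completable as written; it just leaves the decisive lemma (extreme rays are order-theoretically one-dimensional) and the Radon/Minkowski bookkeeping implicit, which is fair for an outline but is where all the remaining work lives.
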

Recall that a vector lattice $X$ is \emph{Dedekind complete} if every non-empty subset of $X$ which is bounded above has a supremum.

We say that a linear subspace $D$ of a vector lattice $X$ \emph{generates $X$ as a vector lattice} if for every $x\in X$ there exist $a_1, \ldots, a_n, b_1, \ldots, b_m \in D$ such that $x = \bigvee_{i=1}^{n}a_i - \bigvee_{j=1}^{m}b_j$. 
We call a linear subspace $D$ of an ordered vector space $X$ \emph{order dense} in $X$ if for every $x\in X$ we have \[x = \inf\left\{z\in D \mid| x\leq z\right\},\] that is, the greatest lower bound of the set $\left\{z\in D \mid| x\leq z\right\}$ exists in $X$ and equals $x$, see \c[p.~360]{113}.
Recall that a linear map $i\colon X\rightarrow Y$, where $X$ and $Y$ are ordered vector spaces, is called \emph{bipositive} if for every $x\in X$ one has $i(x) \geq 0$ if and only if $x\geq 0$. An \emph{embedding} is a bipositive linear map, which implies injectivity. 
For an ordered vector space $X$, the following statements are equivalent, see \c[Corollaries~4.9-11 and Theorems~3.7, 4.13]{vanHaa}:
\begin{enumerate}
	\itemsep0em
	\item\label{embedding.it1} There exist a vector lattice $Y$ and an embedding $i\colon X\rightarrow Y$ such that $i(X)$ is order dense in $Y$.
	\item\label{embedding.it2} There exist a vector lattice $\tilde{Y}$ and an embedding $i\colon X\rightarrow \tilde{Y}$ such that $i(X)$ is order dense in $\tilde{Y}$ and generates $\tilde{Y}$ as a vector lattice.
\end{enumerate}
If $X$ satisfies \ref{embedding.it1}, then $X$ is called a \emph{pre-Riesz space}, and $(Y,i)$ is called a \emph{vector lattice cover} of $X$. For an intrinsic definition of pre-Riesz spaces see \c{vanHaa}. If $X$ is a subspace of $Y$ and $i$ is the inclusion map, we write briefly $Y$ for $(Y,i)$.
As all spaces $\tilde{Y}$ in \ref{embedding.it2} are Riesz isomorphic, we call the pair $(\tilde{Y},i)$ the \emph{Riesz completion of} $X$ and denote it by $X^\rho$. 
The space $X^\rho$ is the smallest vector lattice cover of $X$ in the sense that every vector lattice cover $Y$ of $X$ contains a Riesz subspace that is Riesz isomorphic to $X^\rho$.
For the following result see  \c[Corollary~5]{02} or \c[Proposition 1.6.2]{PRS}.
\begin{proposition}\label{properties.23y}
Let $X$ be a pre-Riesz space, $(Y,i)$ a vector lattice cover of $X$ and $S\sub X$ a non-empty subset.
\begin{enumerate}
\itemsep0em
\item\label{properties.23y.it1} If $\sup S$ exists in $X$, then $\sup i(S)$ exists in $Y$ and $\sup i(S) = i(\sup S)$.
\item\label{properties.23y.it2} If $\sup i(S)$ exists in $Y$ and $\sup i(S)\in i(X)$, then $\sup S$ exists in $X$ and $\sup i(S) = i(\sup S)$.
\end{enumerate}
\end{proposition}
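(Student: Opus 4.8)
The plan is to derive both parts directly from the bipositivity of the embedding $i$ — which, since $i$ is linear, means $x \leq y$ if and only if $i(x) \leq i(y)$ — and to bring in the order density of $i(X)$ in $Y$ only at the decisive point of part~\ref{properties.23y.it1}.

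For part~\ref{properties.23y.it1}, I would set $s := \sup S$ in $X$. Since $i$ is order-preserving, $i(s)$ is at once an upper bound of $i(S)$, so the only real content is to show that it is the \emph{least} upper bound in $Y$. To that end I would take an arbitrary upper bound $u \in Y$ of $i(S)$ and aim to show $i(s) \leq u$. The key idea is to invoke order density in the form $u = \inf\left\{z \in i(X) \mid| u \leq z\right\}$: for any such $z = i(w)$ with $w \in X$ and $u \leq z$, every $x \in S$ satisfies $i(x) \leq u \leq i(w)$, so $x \leq w$ by bipositivity; hence $w$ is an upper bound of $S$, giving $s \leq w$ and therefore $i(s) \leq i(w) = z$. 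This shows $i(s)$ to be a lower bound of $\left\{z \in i(X) \mid| u \leq z\right\}$, and since $u$ is the greatest lower bound of this set, I would conclude $i(s) \leq u$, hence $\sup i(S) = i(s) = i(\sup S)$.

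For part~\ref{properties.23y.it2}, I would let $v := \sup i(S)$ in $Y$ and write $v = i(s)$ using the hypothesis $v \in i(X)$. Bipositivity makes $s$ an upper bound of $S$, since $i(x) \leq v = i(s)$ forces $x \leq s$ for each $x \in S$. For minimality, any upper bound $w$ of $S$ in $X$ yields $i(x) \leq i(w)$ for all $x \in S$, so $i(w)$ is an upper bound of $i(S)$ in $Y$; thus $v \leq i(w)$ and bipositivity gives $s \leq w$. This establishes $s = \sup S$ in $X$ together with the claimed equality.

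I expect the only genuine obstacle to be the least-upper-bound step in part~\ref{properties.23y.it1}. The difficulty is that from $i(s)$ and $u$ both being upper bounds of $i(S)$ one cannot compare them directly, because an arbitrary upper bound $u$ in $Y$ need not lie in $i(X)$, so the comparison cannot simply be pulled back into $X$. The order-density characterization of $i(X)$ is precisely the device that overcomes this, letting me approximate $u$ from above by elements of $i(X)$ on which bipositivity applies. Part~\ref{properties.23y.it2} should present no such difficulty, since its hypothesis already places $\sup i(S)$ inside $i(X)$.
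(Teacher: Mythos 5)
Your proposal is correct and follows essentially the same route as the paper's source for this statement (the paper cites \cite[Corollary~5]{02} and \cite[Proposition~1.6.2]{PRS}): part~(ii) by pure bipositivity, and part~(i) by approximating an arbitrary upper bound $u\in Y$ from above by elements of $i(X)$ via the order-density formula $u=\inf\left\{z\in i(X)\mid u\leq z\right\}$ and pulling the comparison back into $X$. Both steps are carried out correctly, so there is nothing to add.
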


By \c[Theorem 17.1]{vanHaa} every Archimedean directed ordered vector space is a pre-Riesz space. Moreover, every pre-Riesz space is directed.
If $X$ is an Archimedean directed ordered vector space, then every vector lattice cover of $X$ is Archimedean.  
By \c[Chapter~X.3]{VulikhWeber} every Archimedean directed  ordered vector space $X$ has a unique (up to isomorphism) Dedekind completion, which we denote by $X^\delta$. Clearly, $X^\delta$ is a vector lattice cover of $X$.

Let $X$ be a pre-Riesz space and $(Y,i)$ a vector lattice cover of $X$. By \cite[Theorem~4.14]{vanHaa} and \cite[Theorem~4.5]{vanHaa} the Dedekind completions $X^\delta$ and $Y^\delta$ are order and linearly isomorphic, i.e.\ we can identify $X^\delta = Y^\delta$.
A subspace $D$ of $Y$ is called \emph{pervasive in} $Y$, if for every $y\in Y_+$, $y\neq 0$, there exists $d\in D$ such that $0<d\leq y$. 
If $D$ is a pervasive and majorizing subspace of $Y$, then by \c[Proposition~2.8.5]{PRS} $D$ is order dense in $Y$.
By \c[Proposition~2.8.8]{PRS} the pre-Riesz space $X$ is pervasive in $Y$ if and only if $X$ is pervasive in any vector lattice cover. Then $X$ is simply called \emph{pervasive}.
In the following result we give characterizations of pervasive pre-Riesz spaces.
The characterization in \ref{properties.1.it3} is from \cite[Theorem~4.15 and Corollary~4.16]{Waaij} (a short proof can also be found in \c[Lemma~1]{02}) and the one in \ref{properties.1.it4} from \c[Proposition~6]{05}.
\begin{proposition}\label{properties.1}
Let $X$ be an Archimedean pre-Riesz space and $(Y,i)$ a vector lattice cover of $X$. Then the following statements are equivalent.
\begin{enumerate}
\itemsep0em
\item\label{properties.1.it1} $X$ is pervasive.
\item\label{properties.1.it3} For every $y\in Y_+$ with $y\neq 0$ we have $y = \sup\left\{x\in i(X)\mid| 0\leq x\leq y\right\}$.
\item\label{properties.1.it4} For every $b\in X$ with $i(b)\Sup 0> 0$ there exists $x\in X$ such that $0 < i(x) \leq i(b)\Sup 0$.
\end{enumerate}
\end{proposition}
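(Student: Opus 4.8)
The plan is to prove the three equivalences as a cycle \ref{properties.1.it1}$\Rightarrow$\ref{properties.1.it3}$\Rightarrow$\ref{properties.1.it4}$\Rightarrow$\ref{properties.1.it1}. Throughout I identify $X$ with $i(X)\sub Y$ and use two standing facts recorded above: $Y$ is Archimedean, since it is a vector lattice cover of the Archimedean directed space $X$, and $i(X)$ is order dense in $Y$ by the definition of a vector lattice cover.

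For \ref{properties.1.it1}$\Rightarrow$\ref{properties.1.it3} I would fix $y\in Y_+$ with $y\neq 0$ and set $S:=\left\{x\in i(X)\mid| 0\le x\le y\right\}$. Since $y$ is plainly an upper bound of $S$, it suffices to show that an arbitrary upper bound $z$ satisfies $y\le z$; replacing $z$ by $z\Inf y$ I may assume $0\le z\le y$, so the task reduces to excluding $y-z>0$. This is exactly where pervasiveness enters: it yields $d\in i(X)$ with $0<d\le y-z$. The crucial point is that $S$ reproduces itself under translation by $d$: if $s\in S$, then $s+d\le z+(y-z)=y$, whence $s+d\in S$ and therefore $s+d\le z$. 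Starting from $s=0$ and iterating gives $nd\le z$ for every $n\in\NN$, which contradicts the Archimedean property of $Y$ because $d>0$. Hence $z=y$ and $\sup S=y$. I expect this self-reproducing argument -- converting pervasiveness and the Archimedean property into the exactness of the supremum -- to be the main obstacle.

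The step \ref{properties.1.it3}$\Rightarrow$\ref{properties.1.it4} is then immediate. Given $b\in X$ with $i(b)\Sup 0>0$, I apply \ref{properties.1.it3} to $y:=i(b)\Sup 0\in Y_+$. Because $\sup S=y\neq 0$, the set $S$ cannot equal $\{0\}$, so it contains some nonzero element; this element is an $x\in X$ with $0<i(x)\le i(b)\Sup 0$, as required.

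For the closing implication \ref{properties.1.it4}$\Rightarrow$\ref{properties.1.it1}, let $y\in Y_+$ with $y\neq 0$, so that $y\not\le 0$. Applying order density of $i(X)$ to $-y$ and negating turns the defining infimum of elements above into a supremum of elements below, giving $y=\sup\left\{i(a)\mid| a\in X,\ i(a)\le y\right\}$. If every such $i(a)$ were $\le 0$, this supremum would be $\le 0$, contradicting $y\not\le 0$; hence there is $b\in X$ with $i(b)\le y$ and $i(b)\not\le 0$. Then $0<i(b)\Sup 0\le y\Sup 0=y$, and \ref{properties.1.it4} supplies $x\in X$ with $0<i(x)\le i(b)\Sup 0\le y$, which is pervasiveness. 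The only delicate point here is that order density is stated via upper bounds, so the passage to a lower bound in $i(X)$ must be obtained by the negation trick, using that $i(X)$ is a linear subspace.
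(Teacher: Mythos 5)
Your proof is correct, and each step checks out: in (i)$\Rightarrow$(ii) the reduction of an arbitrary upper bound $z$ to $z\wedge y$ is legitimate, the translation $s\mapsto s+d$ does keep $s+d$ in $S$ (it stays in the subspace $i(X)$, stays positive, and $s+d\le z+(y-z)=y$), and the iteration $nd\le z$ for all $n$ contradicts Archimedeanity of $Y$, which holds because $Y$ is a vector lattice cover of the Archimedean directed space $X$; the step (ii)$\Rightarrow$(iii) is indeed immediate; and in (iii)$\Rightarrow$(i) the negation trick correctly converts order density of $i(X)$ into $y=\sup\{i(a)\mid a\in X,\ i(a)\le y\}$, producing some $b$ with $i(b)\not\le 0$ and $0<i(b)\vee 0\le y$.

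One point of comparison: the paper itself gives no proof of this proposition, delegating (ii) to van Waaij (Theorem~4.15 and Corollary~4.16), with a short proof in Malinowski's paper cited as [02, Lemma~1], and (iii) to [05, Proposition~6]. Your argument is essentially a reconstruction of those cited proofs: the translate-and-iterate device in (i)$\Rightarrow$(ii) is exactly the short proof in [02, Lemma~1], and handling (iii)$\Rightarrow$(i) via order density of $i(X)$ and a nontrivial lower element $i(b)\le y$ with $i(b)\not\le 0$ is how [05] proceeds. So your proposal supplies a correct, self-contained proof along the same lines as the sources the paper relies on; arranging the three conditions in a single cycle is a minor organizational economy rather than a genuinely different route.
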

Clearly, in the formula in (ii)  we can replace the set $\left\{x\in i(X) \mid| 0\le x\leq y\right\}$ by the set $\left\{x\in i(X) \mid| 0 < x\leq y\right\}$.
The pre-Riesz space $X$ is called \emph{fordable in} $Y$ if for every $y\in Y$ there exists a set $S\sub X$ such that $\left\{y\right\}^{\t{d}} = i(S)^{\t{d}}$ in $Y$. By \c[Proposition~4.1.18]{PRS} the space $X$ is fordable in $Y$ if and only if $X$ is fordable in any vector lattice cover of $X$. Then $X$ is simply called \emph{fordable}. By \c[Proposition~4.1.15]{PRS} every pervasive pre-Riesz space is fordable.

Next we define disjointness, bands and ideals in ordered vector spaces.
For a subset $M\sub X$ denote the set of upper bounds of $M$ by $M^u:=\left\{x\in X\mid| \forall m\in M\colon m\leq x\right\}$. 
The elements $x,y\in X$ are called \emph{disjoint}, in symbols $x\perp y$, if 
\begin{equation}\label{disjoint}
\left\{x+y,-x-y\right\}^u = \left\{x-y,-x+y\right\}^u,
\end{equation}
for motivation and details see \c{1}. If $X$ is a vector lattice, then this notion of disjointness coincides with the usual one, see \c[Theorem~1.4(4)]{PosOp}. 
The following result is established in \c[Proposition~2.1(ii)]{1}.
\begin{proposition}\label{prelim.36}
Let $Y$ be a pre-Riesz space and $(Y,i)$ a vector lattice cover of $X$. Then $x\perp y$ holds in $X$ if and only if $i(x)\perp i(y)$ holds in $Y$.
\end{proposition}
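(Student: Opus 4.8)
The plan is to unwind both disjointness relations via their definition \eqref{disjoint} as an equality of upper-bound sets, and then to transfer these equalities across the embedding $i$ using two ingredients: bipositivity of $i$ for the easy direction, and order density of $i(X)$ in $Y$ for the hard direction. Writing $a:=x+y$ and $b:=x-y$ and using the linearity of $i$ (so that $i(x)\pm i(y)=i(x\pm y)$), the statement $x\perp y$ in $X$ becomes $\{a,-a\}^u=\{b,-b\}^u$ in $X$, while $i(x)\perp i(y)$ in $Y$ becomes $\{i(a),-i(a)\}^u=\{i(b),-i(b)\}^u$ in $Y$. Thus it suffices to prove that these two set equalities are equivalent.

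For the implication ``$i(x)\perp i(y)$ in $Y$ implies $x\perp y$ in $X$'' I would use only bipositivity. For $u\in X$ one has the chain of equivalences $u\in\{a,-a\}^u$ iff $\pm a\leq u$ in $X$ iff $\pm i(a)\leq i(u)$ in $Y$ iff $i(u)\in\{i(a),-i(a)\}^u$; the same chain holds with $b$ in place of $a$. Feeding in the assumed equality of upper-bound sets in $Y$ then yields $u\in\{a,-a\}^u$ iff $u\in\{b,-b\}^u$, which is precisely the desired equality in $X$.

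The converse implication is the main obstacle, and it is where order density is essential. Assuming $\{a,-a\}^u=\{b,-b\}^u$ in $X$, let $w\in Y$ be an arbitrary upper bound of $\{i(a),-i(a)\}$; I want to show $\pm i(b)\leq w$. The key observation is that every $u\in X$ with $w\leq i(u)$ satisfies $\pm i(a)\leq w\leq i(u)$, hence $\pm a\leq u$ in $X$ by bipositivity, hence $u\in\{a,-a\}^u=\{b,-b\}^u$, hence $\pm i(b)\leq i(u)$. Therefore $i(b)$ and $-i(b)$ are lower bounds of the set $\{i(u):u\in X,\, w\leq i(u)\}=\{z\in i(X):w\leq z\}$. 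Since $i(X)$ is order dense in $Y$, this set has infimum exactly $w$, and because $w$ is the \emph{greatest} lower bound we conclude $\pm i(b)\leq w$, i.e.\ $w\in\{i(b),-i(b)\}^u$. Interchanging the roles of $a$ and $b$ gives the reverse inclusion, so the two upper-bound sets coincide in $Y$.

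I expect the only delicate point to be the correct application of order density: one must apply the defining identity $w=\inf\{z\in i(X):w\leq z\}$ to exactly the set of $X$-upper bounds of $w$, and it is the \emph{greatest}-lower-bound property of the infimum (not merely the existence of a lower bound) that forces $i(b)\leq w$. Everything else is a routine transfer of order inequalities through the bipositive map $i$.
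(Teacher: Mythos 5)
Your proof is correct and follows essentially the same route as the source the paper relies on: the paper gives no proof of this proposition itself but cites \cite[Proposition~2.1(ii)]{1}, where the argument is precisely your two-step transfer --- bipositivity of $i$ to pull the equality of upper-bound sets from $Y$ down to $X$, and order density in the form $w=\inf\left\{z\in i(X)\mid w\leq z\right\}$, applied via its greatest-lower-bound property to each $w\in\left\{i(a),-i(a)\right\}^u$, to push it back up. Your closing remark is also accurate: the argument nowhere uses the lattice structure of $Y$, so it proves the transfer of disjointness across any order dense embedding, which is indeed the generality in which the cited result is stated.
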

The subsequent result is obtained in \c[Theorem~22]{03}.
\begin{theorem}\label{closedness.7supperp}
Let $X$ be an Archimedean pervasive pre-Riesz space, $a\in X$ and $S\sub X_+$ such that $\sup S$ exists in $X$. Then the relation $a\perp S$ implies $a\perp\sup S$.
\end{theorem}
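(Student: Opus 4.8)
The plan is to transport the whole statement into a vector lattice cover of $X$, where disjointness becomes the ordinary lattice relation and the infinite distributive law is at my disposal. So I fix a vector lattice cover $(Y,i)$ of $X$ and put $u:=\sup S$, which exists in $X$ by assumption. Since the embedding $i$ is bipositive and $S\sub X_+$, we have $i(S)\sub Y_+$, and $u\in X_+$, so $i(u)\in Y_+$. By Proposition~\ref{properties.23y}\,\ref{properties.23y.it1} the supremum survives the embedding: $\sup i(S)$ exists in $Y$ and $\sup i(S)=i(u)$.

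Next I rewrite the hypothesis $a\perp S$ inside the cover. By Proposition~\ref{prelim.36}, for each $s\in S$ the relation $a\perp s$ in $X$ is equivalent to $i(a)\perp i(s)$ in the vector lattice $Y$, which, because $i(s)\in Y_+$, means precisely $|i(a)|\Inf i(s)=0$. Thus $|i(a)|\Inf i(s)=0$ for every $s\in S$.

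Now comes the one substantial ingredient, the infinite distributive law for vector lattices (see \c{PosOp}): whenever the supremum of a family exists, meeting it with a fixed element distributes over the family, and the resulting supremum exists. Applied to $|i(a)|$ and the family $(i(s))_{s\in S}$, whose supremum in $Y$ is $i(u)$, this gives
\[ |i(a)|\Inf i(u)\;=\;|i(a)|\Inf \sup_{s\in S} i(s)\;=\;\sup_{s\in S}\bigl(|i(a)|\Inf i(s)\bigr)\;=\;0. \]
Hence $i(a)\perp i(u)$ holds in $Y$, and applying Proposition~\ref{prelim.36} in the reverse direction returns $a\perp u=\sup S$ in $X$, which is the claim.

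Apart from the distributive law the argument is pure bookkeeping, so the step I would guard most carefully is the identification $\sup i(S)=i(u)$, i.e.\ that the supremum computed in $X$ agrees with the one computed in $Y$ after embedding; this is exactly what Proposition~\ref{properties.23y}\,\ref{properties.23y.it1} provides. I note that this route never calls on pervasiveness, which suggests that hypothesis is present here only to keep the statement within the paper's standing setting rather than because the conclusion requires it.
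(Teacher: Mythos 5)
Your proof is correct. Note first that the paper itself contains no proof of Theorem~\ref{closedness.7supperp}: it imports the statement from \cite{03} (Theorem~22 there), so there is no internal argument to compare against; yours is a self-contained replacement. Each step checks out: Proposition~\ref{properties.23y}~\ref{properties.23y.it1} does give $\sup i(S)=i(\sup S)$ in $Y$ (this uses only order density of $i(X)$ in $Y$), Proposition~\ref{prelim.36} transfers disjointness both ways, and the infinite distributive law $x\wedge\sup_{\alpha}y_\alpha=\sup_\alpha(x\wedge y_\alpha)$, whenever the supremum on the left exists, is valid in \emph{every} vector lattice with no completeness or Archimedeanity assumptions (see, e.g., \cite{PosOp}), so applying it to $|i(a)|$ and the family $i(S)\subseteq Y_+$ legitimately yields $|i(a)|\wedge i(\sup S)=0$. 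Your closing observation is also accurate and worth emphasizing: the argument never invokes pervasiveness, and in fact it does not use Archimedeanity either --- the only structural input is that $i(X)$ is order dense in $Y$, which holds for any vector lattice cover of any pre-Riesz space. So your route actually proves a stronger statement than the one quoted: the conclusion holds in every pre-Riesz space, whereas the source \cite{03} states (and presumably proves) it under the standing hypotheses of that paper. The one step you rightly flagged as the potential weak point, the preservation of the supremum under the embedding, is exactly the content of Proposition~\ref{properties.23y}~\ref{properties.23y.it1}; note that the converse direction (suprema in $Y$ descending to $X$) is the delicate one, and you never need it.
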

Let $X$ be an ordered vector space. The \emph{disjoint complement} of a set $M\sub X$ is $M^{\t{d}} := \left\{x\in X \mid| \forall y\in M\colon x\perp y\right\}$. A linear subspace $B$ of $X$ is called a \emph{band} in $X$ if $B\dd = B$, see \c[Definition~5.4]{1}. If $X$ is an Archimedean vector lattice, then this notion of a band coincides with the classical notion of a band in vector lattices (where a band is defined to be an order closed ideal). For every subset $M\sub X$ the disjoint complement $M^{\t{d}}$ is a band, see \c[Proposition~5.5]{1}. 
For an element $a\in X$, by $\B_a$ we denote the \emph{band generated by} $a$, i.e.\ $\B_a:= \bigcap \left\{B\sub X\mid| B \t{ is a band in } X \t{ with } a\in B\right\}$. The band $\B_a$ is called a \emph{principal band}. By \c[Lemma~4]{03} we have $\B_a =\left\{a\right\}\dd$.

The following notion of an ideal is introduced in \c[Definition~3.1]{vanGaa}. A subset $M$ of an ordered vector space $X$ is called \emph{solid} if for every $x\in X$ and $y\in M$ the relation
$\left\{x,-x\right\}^u \supseteq \left\{y,-y\right\}^u$ implies $x\in M$.
A solid subspace of $X$ is called an \emph{ideal}. This notion of an ideal coincides with the classical definition, provided $X$ is a vector lattice. For an element $a\in X$, by $\I_a$ we denote the \emph{ideal generated by} $a$, i.e.\ $\I_a:= \bigcap \left\{I\sub X\mid| I \t{ is an ideal in } X \t{ with } a\in I\right\}$, see also \c[Lemma~5]{03}. The ideal $\I_a$ is called a \emph{principal ideal}.

Next we discuss the restriction property and the extension property for ideals and bands.
Let $X$ be a pre-Riesz space and $(Y,i)$ a vector lattice cover of $X$. 
For $S\sub Y$ we write  $[S]i:=\left\{x\in X\mid| i(x)\in S\right\}$. 
The pair $(L,M)\subseteq \mathcal{P}(X)\times \mathcal{P}(Y)$
is said to satisfy  
\begin{itemize}
	\item[-]
	the \emph{restriction property} (R), if whenever $J\in M$,
	then $[J]i\in L$, and
	\item[-]
	the \emph{extension property} (E), if whenever $I\in L$,
	then there is $J\in M$ such that 
	$I=[J]i$.
\end{itemize}
In \c{1} the properties (R) and (E) are investigated for ideals and bands.
It is shown that the extension property (E) is satisfied for  bands, i.e.\ for $L$ being the set of bands in $X$ and $M$ being the set of bands in $Y$.
Moreover, the restriction property (R) is satisfied for ideals. In general, bands do not have (R) and ideals do not have (E).  
The appropriate sets $M$ and $L$ of directed ideals satisfy (E). If $X$ is fordable, then by \c[Proposition~2.5 and Theorem~2.6]{3} we have (R) for bands.
If for an ideal $I$ in $X$ and an ideal $J$ in $Y$ we have $I=[J]i$, then $J$ is called an \emph{extension ideal} of $I$. An \emph{extension band} $J$ for a band $I$ in $X$ is defined similarly. Extension ideals and bands are not unique, in general.
If an ideal $I$ in $X$ has an extension ideal in $Y$, then $\hat{I}:=\bigcap\left\{J\subseteq Y \mid| J \t{ is an extension ideal of } I\right\}$ is the smallest extension ideal of $I$ in $Y$.
For a band $B$ in $X$ the smallest extension band $\hat{B}$ of $B$ is defined similarly.
From \c[Theorem~16]{03} we obtain the following.
\begin{lemma}\label{majorizing}
Let $X$ be a pre-Riesz space and $I$ an ideal in $X$. If $I$ is directed, then $i(I)$ is majorizing in $\hat{I}$. 
\end{lemma}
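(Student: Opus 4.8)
The plan is to pin down the smallest extension ideal $\hat I$ explicitly and then read off the majorizing property from its description. Throughout, fix the vector lattice cover $(Y,i)$ in which $\hat I$ lives, and recall that ``$i(I)$ is majorizing in $\hat I$'' means precisely that every $j\in\hat I$ satisfies $j\leq v$ for some $v\in i(I)$. The engine of the proof will be the ideal $J_0$ generated by $i(I)$ inside the vector lattice $Y$; I claim $\hat I=J_0$ and that $J_0$ admits a description from which majorizing is immediate.

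First I would exploit directedness. Since $I$ is directed and $i$ is positive, $i(I)$ is a directed linear subspace of $Y$: given $v\in i(I)$, an upper bound in $i(I)$ of $v$ and $-v$ dominates $|v|$, and finite sums of such bounds are again dominated by a single element of $i(I)_+=i(I_+)$. Consequently the usual ``$|y|\leq\sum|v_i|$'' description of the generated ideal collapses, yielding
\[
J_0=\left\{\,y\in Y \mid| \ |y|\leq i(u)\ \text{for some}\ u\in I_+\,\right\},
\]
and it is routine to check that this set is a solid subspace, hence an ideal, of $Y$.

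Next I would verify that $J_0$ is an extension ideal of $I$, i.e.\ $[J_0]i=I$. The inclusion $I\subseteq[J_0]i$ is immediate from $i(I)\subseteq J_0$. For the converse, suppose $i(x)\in J_0$, so $|i(x)|\leq i(u)$ for some $u\in I_+$; by bipositivity of $i$ this gives $-u\leq x\leq u$ in $X$, whence $\left\{u,-u\right\}^u\subseteq\left\{x,-x\right\}^u$. As $u\in I$ and $I$ is solid, it follows that $x\in I$. This is the only place where the ideal hypothesis on $I$ is genuinely used. To identify $J_0$ with $\hat I$, observe that any extension ideal $J$ of $I$ contains $i(I)$, since $i(I)=i([J]i)=J\cap i(X)\subseteq J$; being an ideal, $J$ then contains the ideal $J_0$ it generates, so $J_0\subseteq\hat I$. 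Combined with $\hat I\subseteq J_0$ (because $J_0$ is itself an extension ideal) this gives $\hat I=J_0$.

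Majorizing is then immediate: for $j\in\hat I=J_0$ there is $u\in I_+\subseteq I$ with $j\leq|j|\leq i(u)$, and $i(u)\in i(I)$ dominates $j$. I expect the only real subtlety to be the collapse of the generated-ideal description to a single-element bound in the second step, which is exactly where directedness of $I$ enters; without it the sums $\sum|v_i|$ need not lie below any element of $i(I)$, and the majorizing conclusion would fail.
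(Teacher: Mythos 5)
Your proof is correct; note that the paper itself gives no proof of this lemma but imports it from \cite{03} (Theorem~16), and your argument --- identifying $\hat I$ with the ideal $J_0$ generated by $i(I)$ in $Y$, using directedness of $I$ to collapse the generating description to $\left\{y\in Y : |y|\leq i(u) \text{ for some } u\in I_+\right\}$, and checking via bipositivity and solidity of $I$ that $[J_0]i=I$ so that $J_0=\hat I$ --- is essentially the standard argument behind that citation, with every step (the domination of $|i(x)|$ by $i(u)$ with $u\in I_+$, the solidity computation $\left\{u,-u\right\}^u\subseteq\left\{x,-x\right\}^u$, and the minimality of $J_0$ among extension ideals) sound. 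A side benefit of your route is that it establishes along the way that directed ideals satisfy the extension property (E), so that the smallest extension ideal $\hat I$ is in fact well-defined --- a point the paper also settles only by the same citation.
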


By \c[Proposition 17 (a)]{6} and its subsequent discussion the smallest extension band of $B$ is given by $\hat{B}=i(B)\dd$ in $Y$.
The following result can be found in \c[Theorem~31]{03}.
\begin{theorem}\label{closedness.1002}
Let $X$ be an Archimedean pervasive pre-Riesz space with a vector lattice cover $(Y,i)$. Let $B\sub X$ be a band and $D\sub Y$ an extension band of $B$. Then the following statements are equivalent.
\begin{enumerate}
\itemsep0em
\item\label{closedness.1002.it1} $i(B)$ is majorizing in $D$.
\item\label{closedness.1002.it2} $i(B)$ is order dense in $D$.
\end{enumerate}
If \ref{closedness.1002.it1} or \ref{closedness.1002.it2} are satisfied, then $B$ is a pre-Riesz space and hence directed and $D$ is a vector lattice cover of $B$. Moreover, $B$ is pervasive.
\end{theorem}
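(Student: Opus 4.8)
The plan is to separate the equivalence \ref{closedness.1002.it1}$\Leftrightarrow$\ref{closedness.1002.it2} from the three supplementary assertions, putting essentially all of the work into the implication \ref{closedness.1002.it1}$\Rightarrow$\ref{closedness.1002.it2}. Throughout I will use that $Y$ is Archimedean, that as a band in the vector lattice $Y$ the set $D$ is an order closed ideal and hence itself an Archimedean vector lattice, and that $i(B)$ is a linear subspace of $D$ (the inclusion $i(B)\subseteq D$ being just a reformulation of $B=[D]i$).

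The implication \ref{closedness.1002.it2}$\Rightarrow$\ref{closedness.1002.it1} is immediate: if $i(B)$ is order dense in $D$, then for each $d\in D$ the infimum $\inf\left\{z\in i(B)\mid| d\le z\right\}$ exists in $D$ and equals $d$; in particular the set $\left\{z\in i(B)\mid| d\le z\right\}$ is non-empty, which is precisely the assertion that $i(B)$ is majorizing in $D$.

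For \ref{closedness.1002.it1}$\Rightarrow$\ref{closedness.1002.it2} the crucial point, valid independently of \ref{closedness.1002.it1}, is that $i(B)$ is \emph{pervasive} in $D$. To establish this I would fix $d\in D_+$ with $d\neq 0$ and regard $d$ as a non-zero element of $Y_+$. Since $X$ is pervasive in $Y$, there is $x\in i(X)$ with $0<x\le d$. As $D$ is solid and $0\le x\le d\in D$, it follows that $x\in D$; writing $x=i(x')$ with $x'\in X$, the relation $i(x')\in D$ forces $x'\in[D]i=B$, so $x\in i(B)$. Hence every non-zero $d\in D_+$ dominates a strictly positive element of $i(B)$. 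Assuming now \ref{closedness.1002.it1}, the subspace $i(B)$ is both majorizing and pervasive in the Archimedean vector lattice $D$, and therefore order dense in $D$ by \cite[Proposition~2.8.5]{PRS}, which is \ref{closedness.1002.it2}.

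It remains to read off the supplementary assertions under \ref{closedness.1002.it1} (equivalently \ref{closedness.1002.it2}). The restriction $i|_B\colon B\to D$ is linear and bipositive, maps $B$ into $D$, and by \ref{closedness.1002.it2} has order dense range in the vector lattice $D$; by the embedding characterization of pre-Riesz spaces this exhibits $B$ as a pre-Riesz space with vector lattice cover $(D,i|_B)$, and in particular $B$ is directed. Finally, the pervasiveness of $i(B)$ in $D$ obtained above is exactly the statement that $B$ is pervasive in its cover $D$, so $B$ is pervasive. The one genuinely delicate step is the passage inside \ref{closedness.1002.it1}$\Rightarrow$\ref{closedness.1002.it2} in which a strictly positive minorant of $d$ is first produced in $i(X)$ by pervasiveness of $X$ and then pulled back into $i(B)$: this works only because $D$ is an ideal and $B=[D]i$, and it is precisely this compatibility between the extension band $D$ and the restriction $[D]i$ that makes pervasiveness descend from $X$ to $B$.
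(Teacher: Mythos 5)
Your proof is correct. One thing worth knowing: this paper does not prove Theorem~\ref{closedness.1002} at all --- it imports it verbatim from the cited source [03, Theorem~31], so there is no internal proof to measure you against; your argument is a legitimately self-contained derivation built entirely from facts the paper recalls in its preliminaries. The skeleton is sound at every joint: order density trivially forces the upper set $\left\{z\in i(B)\mid d\le z\right\}$ to be non-empty, giving \ref{closedness.1002.it2}$\Rightarrow$\ref{closedness.1002.it1}; your key observation that $i(B)$ is pervasive in $D$ regardless of \ref{closedness.1002.it1} is right, and it is correctly driven by the two hypotheses you isolate --- $D$ is an ideal in $Y$ (a band in the Archimedean vector lattice $Y$ is an order closed ideal, and $Y$ is Archimedean because $X$ is Archimedean and directed), and $B=[D]i$, which pulls the minorant $0<i(x')\le d$ supplied by pervasiveness of $X$ back into $i(B)$. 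Combining pervasive with majorizing via \cite[Proposition~2.8.5]{PRS} inside the Archimedean vector lattice $D$ then yields order density, and the supplementary assertions fall out exactly as you say: the bipositive map $i|_B$ with order dense range in $D$ realizes $B$ as a pre-Riesz space with vector lattice cover $(D,i|_B)$ (hence directed), and pervasiveness of $i(B)$ in this one cover gives pervasiveness of $B$ simpliciter via \cite[Proposition~2.8.8]{PRS}, which you cite only implicitly but which the paper's conventions make available. The only cosmetic suggestion is to make that last citation explicit, since ``pervasive'' is defined as a property relative to covers and Proposition~2.8.8 is what makes it cover-independent.
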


In the literature on vector lattices, atoms and discrete elements are defined in several  different ways. We use the following notions.
In a vector lattice $Y$, an element $a\in Y\ohne\{0\}$ is called an \emph{atom} if for every $y\in Y$ with $|y| \leq |a|$ there is $\lambda\in\RR$ such that $y=\lambda a$.
An element $d\in Y\ohne\{0\}$ is called \emph{discrete} if for every pair of disjoint elements $u,v\in Y$ with $0\leq u\leq |d|$ and $0\leq v\leq |d|$ it follows that $u=0$ or $v=0$, see \c[Definition~III.13.1]{Vulikh_en}.
The following statements can be found in \c[Chapter 2, Exercises~5.(i)]{PosOp}, \c[III.13.1~b)]{Vulikh_en}, \c[Lemma~26.2 (ii)]{Zaa1}.
\begin{proposition}\label{prelim.19.2}
Let $Y$ be an Archimedean vector lattice and $a\in Y_+\setminus\{0\}$. Then the following statements are equivalent.
\begin{enumerate}
\itemsep0em
\item\label{prelim.19.2.eq1} The element $a$ is an atom.
\item\label{prelim.19.2.eq2} The principal ideal $\I_a$ is one-dimensional.
\item\label{prelim.19.2.eq3} The element $a$ is a discrete element.
\end{enumerate}
\end{proposition}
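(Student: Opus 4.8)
The plan is to prove the cycle (i) $\Rightarrow$ (ii) $\Rightarrow$ (iii) $\Rightarrow$ (i). Throughout I use that $|a|=a$ since $a\in Y_+\ohne\{0\}$, together with the explicit description of the principal ideal, namely $\I_a=\{y\in Y: |y|\leq\lambda a \text{ for some }\lambda>0\}$, which holds for positive $a$ in a vector lattice.

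For (i) $\Rightarrow$ (ii), given $y\in\I_a$ I choose $\lambda>0$ with $|y|\leq\lambda a$; then $|y/\lambda|\leq a=|a|$, so the atom property yields $y/\lambda=\mu a$ and hence $y\in\RR a$. Since the reverse inclusion $\RR a\sub\I_a$ is clear and $a\neq 0$, the ideal $\I_a=\RR a$ is one-dimensional. For (ii) $\Rightarrow$ (iii), assume $\I_a=\RR a$ and let $u,v$ be disjoint with $0\leq u\leq a$ and $0\leq v\leq a$; then $u,v\in\I_a$, so $u=\lambda a$ and $v=\mu a$ with $\lambda,\mu\geq 0$. By Proposition~\ref{prelim.17bb} the relation $u\perp v$ reads $\min\{\lambda,\mu\}\,a=(\lambda a)\Inf(\mu a)=0$, which forces $\min\{\lambda,\mu\}=0$, i.e. $u=0$ or $v=0$.

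The substance of the argument is (iii) $\Rightarrow$ (i). The key is a dichotomy: for $0\leq u\leq a$ and every real $s\in[0,1]$, either $u\leq sa$ or $u\geq sa$. To see this, let $p=(u-sa)\Sup 0$ and $q=(sa-u)\Sup 0$ be the positive and negative parts of $u-sa$, which are disjoint. From $0\leq u\leq a$ and $0\leq s\leq 1$ one checks $p\leq a$ and $q\leq a$, so discreteness of $a$ gives $p=0$ or $q=0$, that is, $u\leq sa$ or $u\geq sa$. Now fix $y$ with $0\leq y\leq a$ and set $\lambda^*=\sup\{s\in[0,1]: sa\leq y\}$. The dichotomy shows $sa\leq y$ for all $s<\lambda^*$ and $y\leq sa$ for all $s>\lambda^*$; transporting these scalar suprema and infima through the scaling by $a$ by means of Proposition~\ref{prelim.17bb} yields $\lambda^* a\leq y$ and $y\leq\lambda^* a$, hence $y=\lambda^* a$. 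Finally, for arbitrary $y$ with $|y|\leq a$ I apply this to $y\Sup 0$ and $(-y)\Sup 0$, both of which lie in $[0,a]$, to write $y\Sup 0=\lambda a$ and $(-y)\Sup 0=\mu a$; their disjointness forces $\lambda=0$ or $\mu=0$ exactly as above, so $y$ is a scalar multiple of $a$ and $a$ is an atom.

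I expect the dichotomy lemma together with the supremum argument to be the main obstacle. The delicate point is that $Y$ is only assumed Archimedean and not Dedekind complete, so the suprema and infima that pin down $\lambda^*$ cannot be formed directly among the relevant elements of $Y$; instead one works with suprema of real scalars, where they always exist, and moves them into $Y$ via Proposition~\ref{prelim.17bb}. This is precisely the step where the Archimedean hypothesis is used.
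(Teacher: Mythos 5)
Your proof is correct, and all three implications check out: the dichotomy argument for (iii) $\Rightarrow$ (i) works (both $p=(u-sa)\Sup 0$ and $q=(sa-u)\Sup 0$ are indeed disjoint, positive, and dominated by $a$ when $s\in[0,1]$), and the transport of the scalar supremum and infimum into $Y$ via Proposition~\ref{prelim.19.2} -- more precisely via Proposition~\ref{prelim.17bb} -- is exactly where Archimedeanity enters, as you say; the boundary cases $\lambda^*=0$ and $\lambda^*=1$ also go through, since $y\geq 0$ and $y\leq a$ handle them directly. Note that the paper itself gives no proof of this proposition: it is quoted as a known preliminary with references to Aliprantis--Burkinshaw, Vulikh, and Luxemburg--Zaanen, and your argument is essentially the classical one found there (the decomposition of $u-sa$ into disjoint positive and negative parts below $a$ is the standard device in the proof of Lemma~26.2 of Luxemburg--Zaanen), so you have in effect reconstructed the cited proof rather than found a new route.
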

Notice that if $Y$ is not Archimedean, then the implication from \ref{prelim.19.2.eq3} to \ref{prelim.19.2.eq1} is not true, in general.
For further results see \c[III.13.1 a) to d)]{Vulikh_en}.
The following observation is established in \c[Proposition~III.13.1~d)]{Vulikh_en}.
\begin{proposition}\label{prelim.19.6}
Let $Y$ be an Archimedean vector lattice, $a \in Y_+$ an atom and $x\in Y$ with $0<a\leq x$. Then there exists a real number $\lambda>0$ such that $x-\lambda a\geq 0$ and $(x-\lambda a)\perp a$.
\end{proposition}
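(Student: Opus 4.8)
The plan is to take $\lambda$ to be the largest scalar for which $\lambda a$ still lies below $x$, and then show that the ``remainder'' $x-\lambda a$ is disjoint from $a$ precisely because $a$ admits no nonzero proper fractions of itself. Concretely, I would introduce the set
\[\Lambda := \left\{\mu\in\RR_{\geq 0}\mid| \mu a\leq x\right\},\]
which is non-empty since the hypothesis $0<a\leq x$ gives $1\in\Lambda$. First I would check that $\Lambda$ is bounded above: if it were not, then for each $n\in\NN$ there would be $\mu\in\Lambda$ with $\mu\geq n$, and since $a\geq 0$ this yields $na\leq \mu a\leq x$ for all $n$, so the Archimedean property forces $a\leq 0$, contradicting $a>0$. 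Hence $\lambda:=\sup\Lambda$ exists and satisfies $\lambda\geq 1>0$.

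Next I would verify that $\lambda$ itself belongs to $\Lambda$, i.e.\ that $x-\lambda a\geq 0$. Every element of $\{\mu a\mid| \mu\in\Lambda\}$ is bounded above by $x$, and by Proposition~\ref{prelim.17bb} (applied to the bounded set $\Lambda$ and the positive element $a$) the supremum of this set exists and equals $(\sup\Lambda)\,a=\lambda a$. Since $x$ is an upper bound of the set, we obtain $\lambda a\leq x$, so that $w:=x-\lambda a\geq 0$, establishing the first assertion.

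It remains to prove $w\perp a$, which in the vector lattice $Y$ amounts to $w\Inf a=0$. Here I would argue by contradiction using the atom property. Set $u:=w\Inf a$, so $0\leq u\leq a$, hence $|u|\leq|a|$; since $a$ is an atom there is $\rho\in\RR$ with $u=\rho a$, and $u\geq 0$ together with $a>0$ forces $\rho\geq 0$. If $\rho>0$, then from $u\leq w=x-\lambda a$ we get $\rho a+\lambda a\leq x$, that is $(\lambda+\rho)a\leq x$, so $\lambda+\rho\in\Lambda$ with $\lambda+\rho>\lambda=\sup\Lambda$, a contradiction. Therefore $\rho=0$, i.e.\ $w\Inf a=0$, which gives $(x-\lambda a)\perp a$.

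The genuinely substantive step is this last disjointness argument: choosing $\lambda$ as a supremum is routine once boundedness is secured, but one must rule out any residual overlap of $w$ with $a$, and the only tool available is the atom property. The point is that the meet $w\Inf a$ is forced to be a scalar multiple of $a$, and the maximality of $\lambda$ then leaves no room for a positive multiple. I expect the only place requiring care is the justification that the supremum $\lambda a$ is actually dominated by $x$ (rather than merely approximated), which is exactly what Proposition~\ref{prelim.17bb} supplies; everything else is a direct consequence of the lattice operations and the definition of an atom.
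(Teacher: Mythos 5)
Your proof is correct: the paper itself does not prove this statement but cites it from Vulikh (Proposition~III.13.1~d)), and your argument — taking $\lambda=\sup\{\mu\geq 0 \mid \mu a\leq x\}$, securing $\lambda a\leq x$ via Archimedeanity and Proposition~\ref{prelim.17bb}, and then using the atom property on $(x-\lambda a)\Inf a$ together with maximality of $\lambda$ — is precisely the standard classical proof. All steps check out, including the two points that need care: that $x$ dominates the supremum $\lambda a$ itself (not just each $\mu a$), and that the meet $(x-\lambda a)\Inf a$ is a nonnegative multiple of $a$, which maximality then forces to vanish.
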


\section{Projection bands in pre-Riesz spaces}\label{projectionbands0}
For a projection $P$ on an ordered  vector space $X$, i.e.\ for a linear operator  with $P^2=P$, there are two natural ways to relate $P$ with the order structure.
On one hand, if $A\sub X$ is a linear subspace, a projection $P\colon X\rightarrow X$ with $0\leq P \leq I$ and $P(X)=A$ is called an \emph{order projection} onto $A$. On the other hand, if $B\sub X$ is a band such that $X=B\oplus B^{\t{d}}$, then $B$ is called a \emph{projection band}, and the operator $P_B\colon X\rightarrow X$, $x\mapsto x_1$ is well-defined, where 
for every $x\in X$ we have the unique decomposition $x=x_1+x_2$ for $x_1\in B$ and $x_2\in B^{\t{d}}$. The operator $P_B$ is called the \emph{band projection} onto $B$.
Order projections are introduced in \c{GlueWolf19} and considered in \c{14} in relation to band projections. 
In the following statement we collect the results from \c[Theorem 3.2 and Propositions~2.5, 2.3 and 3.1]{14}.
\begin{theorem}\label{Glueck}
	Let $X$ be a pre-Riesz space.
	\begin{enumerate}
		\itemsep0em 
		\item\label{Glueck.it1} An operator $P\colon X\rightarrow X$ is an order projection if and only if it is a band projection.
		\item\label{Glueck.it2} Every projection band $B\sub X$ is directed.
		\item\label{Glueck.it3} A band $B\sub X$ is a projection band if and only if $B^{\t{d}}$ is a projection band.
		\item\label{Glueck.it4} If two band projections $P$ and $Q$ have the same range, then $P=Q$.
	\end{enumerate}
\end{theorem}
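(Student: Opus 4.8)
The plan is to establish the equivalence \ref{Glueck.it1} and then read off \ref{Glueck.it2}--\ref{Glueck.it4} from it. For the implication ``band projection $\Rightarrow$ order projection'', let $P=P_B$ and fix $x\in X_+$ with band decomposition $x=x_1+x_2$, $x_1\in B$, $x_2\in B^{\t d}$. Then $x_1\perp x_2$, so in a vector lattice cover $(Y,i)$ Proposition~\ref{prelim.36} gives $i(x_1)\perp i(x_2)$ with $i(x_1)+i(x_2)=i(x)\geq 0$; Lemma~\ref{prelim.0}\ref{prelim.0.it2} forces $i(x_1),i(x_2)\geq 0$, and bipositivity of $i$ yields $x_1,x_2\geq 0$. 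Hence $0\leq P(x)\leq x$ for all $x\in X_+$, i.e.\ $0\leq P\leq I$; idempotency and $P(X)=B$ are immediate, so $P$ is an order projection.

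For the substantive direction ``order projection $\Rightarrow$ band projection'', let $0\leq P\leq I$ with $P^2=P$, and set $B:=P(X)$ and $C:=(I-P)(X)=\ker P$, so that $X=B\oplus C$ algebraically and $B=\{x\in X: P(x)=x\}$. The key step is to show
\[
P(x)\perp(I-P)(x)\qquad\text{for every } x\in X_+ ,
\]
directly from the upper-bound definition~\eqref{disjoint} of disjointness. Put $u:=P(x)\geq 0$ and $v:=(I-P)(x)\geq 0$. The inclusion $\{u+v,-u-v\}^u\subseteq\{u-v,v-u\}^u$ is immediate from $u,v\geq 0$. For the reverse inclusion, suppose $w\geq u-v$ and $w\geq v-u$: applying the positive operator $P$ to the first and using $P(u)=u$, $P(v)=0$ gives $P(w)\geq u$, while applying $I-P$ to the second gives $(I-P)(w)\geq v$; adding yields $w\geq u+v$, and $w\geq -(u+v)$ is then automatic. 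Thus $u\perp v$.

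The remainder of \ref{Glueck.it1} is bookkeeping. As $X$ is directed and $0\leq P\leq I$, each of $B,C$ is directed, so every element is a difference of positive ones; the displayed disjointness gives $b\perp c$ for all $b\in B_+$, $c\in C_+$, and a twofold use of the fact that disjoint complements are linear subspaces (\c[Proposition~5.5]{1}) upgrades this to $B\perp C$, i.e.\ $C\sub B^{\t d}$ and $B\sub C^{\t d}$. If $w\in B^{\t d}$, decomposing $w=w_1+w_2$ with $w_1\in B$, $w_2\in C\sub B^{\t d}$ gives $w_1=w-w_2\in B^{\t d}$, whence $w_1\perp w_1$ and so $w_1=0$; thus $B^{\t d}=C$. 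The same self-disjointness argument on $C^{\t d}$ shows $B^{\t{dd}}=C^{\t d}=B$, so $B$ is a band, $X=B\oplus B^{\t d}$, and $P$ fixes $B$ and annihilates $B^{\t d}=C$, i.e.\ $P=P_B$. Now \ref{Glueck.it2} follows since $P_B\geq 0$: for $b,b'\in B$ pick a common upper bound $z$ in the directed space $X$, then $P_B(z)\in B$ satisfies $b=P_B(b)\leq P_B(z)$ and $b'\leq P_B(z)$. For \ref{Glueck.it3}, $B^{\t d}$ is always a band and $B=B^{\t{dd}}$ makes the decomposition $X=B\oplus B^{\t d}=(B^{\t d})^{\t d}\oplus B^{\t d}$ symmetric in $B$ and $B^{\t d}$. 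For \ref{Glueck.it4}, the range of a band projection $P_B$ equals $B$ and $P_B$ is determined by $B$ through the unique decomposition, so equal ranges force $P=Q$.

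The main obstacle is the displayed disjointness $P(x)\perp(I-P)(x)$. Merely knowing $x=u+v$ with $u,v\geq 0$ does not give $u\perp v$ -- already in $X=\RR$ this fails -- so the verification of the nontrivial inclusion of upper-bound sets must genuinely use $P^2=P$ \emph{together with} positivity of both $P$ and $I-P$. The subsequent passage from positive elements to $B\perp C$, and the identification $B^{\t d}=C$, are where one relies on disjoint complements being subspaces and on $u\perp u\Rightarrow u=0$.
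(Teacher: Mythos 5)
Your proof is correct, but note that there is no in-paper proof to compare it against: the paper states Theorem~\ref{Glueck} purely as a collection of results imported from \cite[Theorem 3.2 and Propositions 2.5, 2.3, 3.1]{14}, so your proposal is in effect a reconstruction of the cited arguments. The genuinely substantive step is your verification of $P(x)\perp(I-P)(x)$ for $x\in X_+$ straight from the upper-bound definition \eqref{disjoint}: the trick of applying the positive operator $P$ to $w\geq u-v$ (giving $P(w)\geq u$ via $P(u)=u$, $P(v)=0$) and the positive operator $I-P$ to $w\geq v-u$, then adding, is exactly the mechanism that makes the order-projection-to-band-projection direction work in \cite{14}, and your bookkeeping around it is sound: the easy inclusion uses only $u,v\geq 0$, the bound $w\geq -(u+v)$ is indeed automatic from $w\geq u+v\geq 0$, and $u\perp u\Rightarrow u=0$ (evaluate the upper-bound sets at $0$) correctly yields $B^{\t{d}}=C$, $C^{\t{d}}=B$, hence $B\dd=B$. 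One step you leave implicit and should write out: the displayed disjointness only compares the two components of a \emph{single} positive element, so to get $b\perp c$ for arbitrary $b\in B_+$, $c\in C_+$ you must apply it to $x:=b+c\in X_+$, observing $P(x)=b$ and $(I-P)(x)=c$; with that one line inserted, the upgrade to $B\perp C$ via linearity of disjoint complements and directedness of $B$ and $C$, as well as your derivations of items \ref{Glueck.it2}--\ref{Glueck.it4} from \ref{Glueck.it1}, are all complete.
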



Order projections in vector lattices are order continuous. To observe the same fact in Archimedean pre-Riesz spaces, we need the following result. 
\begin{proposition}\label{o_continuity_preserved_by_domination}
	Let $X$ be an Archimedean pre-Riesz space, $S,T\colon X\rightarrow X$ two operators with $0\leq S\leq T$ and $T$ order continuous.
	Then $S$ is order continuous.
\end{proposition}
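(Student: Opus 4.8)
The plan is to reduce order continuity of $S$ to the decreasing-net criterion of Lemma~\ref{charOCts} and then to exploit the domination $0\le S\le T$ by a simple squeezing argument. Since $X$ is an Archimedean pre-Riesz space it is in particular Archimedean and directed, and $S$ is positive (being dominated below by $0$). Hence Lemma~\ref{charOCts}, applied with $Z=X$, tells us that it suffices to show the following: for every net $(x_\alpha)_\alpha$ in $X$ with $x_\alpha\downarrow 0$ one has $S(x_\alpha)\downarrow 0$.

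So I would fix such a net and first record the elementary order-theoretic consequences of $x_\alpha\downarrow 0$. Every $x_\alpha\ge 0$, since $0$ is a lower bound of a decreasing net whose infimum is $0$; the net $\bigl(S(x_\alpha)\bigr)_\alpha$ is decreasing, because $S$ is positive and therefore order preserving; and $S(x_\alpha)\ge 0$ for every $\alpha$. Thus $0$ is a lower bound of $\{S(x_\alpha)\}_\alpha$, and the only thing left to verify is that it is the \emph{greatest} lower bound.

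The key step is to bring $T$ into play. Since $T$ is order continuous, Lemma~\ref{charOCts} yields $T(x_\alpha)\downarrow 0$, so in particular $\inf_\alpha T(x_\alpha)=0$. Moreover, from $0\le S\le T$ and $x_\alpha\ge 0$ we obtain $(T-S)(x_\alpha)\ge 0$, that is, $S(x_\alpha)\le T(x_\alpha)$ for every $\alpha$. Now let $w$ be any lower bound of $\{S(x_\alpha)\}_\alpha$. Then $w\le S(x_\alpha)\le T(x_\alpha)$ for all $\alpha$, so $w$ is also a lower bound of $\{T(x_\alpha)\}_\alpha$, whence $w\le\inf_\alpha T(x_\alpha)=0$. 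This shows $\inf_\alpha S(x_\alpha)=0$, i.e.\ $S(x_\alpha)\downarrow 0$, and Lemma~\ref{charOCts} then gives that $S$ is order continuous.

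The argument is short, and the only point demanding a little care — rather than being a genuine obstacle — is that one must not presuppose that $\inf_\alpha S(x_\alpha)$ exists in the general Archimedean ordered space $X$. Instead one proves directly that $0$ is the greatest lower bound, using the observation that every lower bound of the dominated net $\bigl(S(x_\alpha)\bigr)_\alpha$ is automatically a lower bound of the dominating net $\bigl(T(x_\alpha)\bigr)_\alpha$, whose infimum is already known to be $0$. This sidesteps any existence issue and keeps the proof entirely within the decreasing-net characterization.
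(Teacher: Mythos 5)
Your proof is correct and follows essentially the same route as the paper's: reduce to the decreasing-net criterion of Lemma~\ref{charOCts}, use order continuity of $T$ to get $T(x_\alpha)\downarrow 0$, and show $\inf_\alpha S(x_\alpha)=0$ directly by observing that every lower bound of $\{S(x_\alpha)\}_\alpha$ is a lower bound of $\{T(x_\alpha)\}_\alpha$. Your explicit remark about not presupposing the existence of the infimum is exactly the care the paper's argument also takes, handled there in the same way.
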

\begin{proof}
	As $P\geq 0$ and $X$ is Archimedean, by Lemma~\ref{charOCts} it suffices to show that for every net $(x_\alpha)_\alpha$ in $X_+$ with $x_\alpha \downarrow 0$ it follows $S(x_\alpha)\downarrow 0$. Let $(x_\alpha)_\alpha$ be a net in $X_+$ such that $x_\alpha \downarrow 0$. From $0\leq S\leq T$ for every $\alpha$ it follows
	\[0\leq S(x_\alpha)\leq T(x_\alpha).\]
	As $T$ is order continuous, from $x_\alpha \downarrow 0$ it follows $T(x_\alpha)\downarrow 0$. In particular, we have $\inf \left\{T(x_\alpha)\mid|\alpha\in\mathcal{A}\right\}=0$. Due to $S\geq 0$ we obtain $S(x_\alpha) \downarrow$. It is left to show that $\inf \left\{S(x_\alpha)\mid|\alpha\in\mathcal{A}\right\}=0$.
	If $v$ is a lower bound of the set $M:=\left\{S(x_\alpha)\mid|\alpha\in\mathcal{A}\right\}$, then $v$ is also a lower bound of the set  $\left\{T(x_\alpha)\mid|\alpha\in\mathcal{A}\right\}$, hence $v\leq 0$. As $0$ is a lower bound of $M$, we obtain $0=\inf M$.
\end{proof}

Since for every order projection $P$ we have $0\leq P\leq I$, where $I$ is the identity operator, from Proposition~\ref{o_continuity_preserved_by_domination} we immediately obtain the following.
\begin{corollary}\label{band_projection}
Let $X$ be an Archimedean pre-Riesz space and $P$ an order projection. 
Then $P$ is order continuous.
\end{corollary}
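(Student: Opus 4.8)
The plan is to derive this directly from Proposition~\ref{o_continuity_preserved_by_domination}, since the corollary is essentially an instantiation of that domination result with the identity operator playing the role of the order-continuous dominating operator. The only genuine ingredients are the defining inequalities of an order projection and the (essentially trivial) order continuity of the identity.

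First I would recall that, by the definition of an order projection $P$ onto a subspace $A$, one has $0\leq P\leq I$, where $I$ denotes the identity operator on $X$; this is built into the definition given at the start of Section~\ref{projectionbands0}. Next I would observe that $I$ is order continuous: if $(x_\alpha)_\alpha$ is a net with $x_\alpha\xrightarrow{o}x$, then trivially $I(x_\alpha)=x_\alpha\xrightarrow{o}x=I(x)$, so the defining condition of order continuity holds for $I$ by inspection.

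Having these two facts in hand, I would apply Proposition~\ref{o_continuity_preserved_by_domination} with $S:=P$ and $T:=I$. Since $X$ is Archimedean, $0\leq P\leq I$, and $I$ is order continuous, the proposition yields at once that $P$ is order continuous, which is the assertion. No further estimates are needed, because the work of passing from domination to order continuity has already been carried out in the proof of the proposition (via the characterization in Lemma~\ref{charOCts}, reducing order continuity of a positive operator to preservation of nets decreasing to $0$).

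There is no serious obstacle here: the content of the corollary lies entirely in Proposition~\ref{o_continuity_preserved_by_domination}, and the corollary is merely the special case $T=I$. The only point deserving a sentence of care is that the inequality $0\leq P\leq I$ is genuinely part of the definition of an order projection rather than something to be derived, so I would make explicit reference to that definition to justify invoking the domination proposition.
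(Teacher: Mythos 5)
Your proposal is correct and matches the paper's argument exactly: the paper also obtains the corollary by noting that $0\leq P\leq I$ holds by the definition of an order projection and then applying Proposition~\ref{o_continuity_preserved_by_domination} with $T=I$, the identity being trivially order continuous. Nothing further is needed.
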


Recall that for two ideals $B,C$ in a vector lattice $X$ the relation $X=B\oplus C$ implies that $B$ is a projection band and $C=B^{\t{d}}$. The two subsequent Theorems~\ref{bandsareprojectbands} and \ref{directedbandsareprojectbands} generalize this result to pre-Riesz spaces.
\begin{theorem}\label{bandsareprojectbands}
	Let $X$ be a pervasive pre-Riesz space and $B,D\sub X$ two ideals such  
	that $X=B\oplus D$. Then $B$ is a projection band and $X=B\oplus  
	B^{\t{d}}$.
\end{theorem}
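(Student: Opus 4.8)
The plan is to promote the algebraic projection coming from the decomposition $X=B\oplus D$ to an order projection, and then to apply Theorem~\ref{Glueck}\,(i). Define $P\colon X\to X$ by $P(x)=b$, where $x=b+d$ is the unique decomposition with $b\in B$ and $d\in D$. Clearly $P$ is linear, $P^2=P$ and $P(X)=B$. The entire content of the theorem is the operator inequality $0\leq P\leq I$: once this is shown, $P$ is an order projection onto $B$, so by Theorem~\ref{Glueck}\,(i) it is a band projection, and hence $B$ is a projection band with $X=B\oplus B^{\t{d}}$.

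To establish $0\leq P\leq I$ I would fix a vector lattice cover $Y$ of $X$ and regard $X$ as an order dense subspace of $Y$. The one auxiliary fact I would record first is a transfer of solidity: if $w\in X$ and $b\in B$ satisfy $|w|\leq|b|$ in $Y$, then $w\in B$. Indeed, since the inclusion $X\hookrightarrow Y$ is bipositive, $|w|\leq|b|$ in $Y$ forces $\left\{w,-w\right\}^u\supseteq\left\{b,-b\right\}^u$ in $X$, and $B$ is solid. The same statement holds with $B$ replaced by $D$.

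Now take $x\in X_+$ and read the decomposition $x=b+d$ inside $Y$, so that $b+d=x\geq0$. To see $P(x)=b\geq0$, suppose the negative part $b^-=(-b)\Sup 0$ were nonzero. By pervasiveness (see Proposition~\ref{properties.1}) there is $w\in X$ with $0<w\leq b^-$ in $Y$. On the one hand $|w|=w\leq b^-\leq|b|$, so $w\in B$ by the transfer above. On the other hand, $b+d\geq0$ gives $d\geq-b$, and since $\Sup 0$ is monotone this yields $d^+\geq b^-$; hence $|w|\leq b^-\leq d^+\leq|d|$, so $w\in D$ as well. Therefore $w\in B\cap D=\left\{0\right\}$, contradicting $w>0$. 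Thus $b^-=0$, i.e.\ $b\geq0$, and the symmetric argument with the roles of $B,D$ and $b,d$ interchanged gives $d\geq0$. Consequently $0\leq P(x)=b\leq b+d=x$ for every $x\in X_+$, which is exactly $0\leq P\leq I$ (the operator order being available because the pre-Riesz space $X$ is directed).

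The decisive point---and the only place where all the hypotheses are used at once---is this positivity step, where a single witness $w$ must be forced simultaneously into both ideals. Pervasiveness produces an element of $X$ beneath $b^-$, while the vector-lattice inequality $d^+\geq b^-$, which is nothing but a reformulation of $x\geq0$, is what allows the solidity of $D$ to capture that same $w$. The remainder is formal: linearity and idempotency of $P$ are immediate, and the passage from $0\leq P\leq I$ to ``$B$ is a projection band'' is handed to us by Theorem~\ref{Glueck}\,(i).
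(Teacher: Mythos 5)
Your proof is correct, but it follows a genuinely different route from the paper's. The paper never mentions operators: it first proves $B\perp D$ directly (assuming $b\not\perp d$, pervasiveness produces $x\in X$ with $0<i(x)\leq |i(b)|\Inf|i(d)|$ in the cover, and solidity of both ideals forces $x\in B\cap D=\left\{0\right\}$, a contradiction), and then identifies $B^{\t{d}}=D$ by a short algebraic argument using the uniqueness of the decomposition, which yields $X=B\oplus B^{\t{d}}$ outright. You instead never establish $B\perp D$ at all: you promote the algebraic projection $P$ of the direct sum to an order projection by showing that $x\geq 0$ forces $b,d\geq 0$ --- where the lattice inequality $d^+\geq b^-$ (a reformulation of $x\geq 0$ in the cover) lets pervasiveness squeeze a single witness $w$ into $B\cap D$ --- and then delegate all band-theoretic content to Theorem~\ref{Glueck}\ref{Glueck.it1}. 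Both proofs use pervasiveness in the same essential way, namely forcing a strictly positive element of $X$ under two moduli simultaneously; the difference is where the witness sits ($b^-$ versus $|i(b)|\Inf|i(d)|$) and what finishes the job. Your version is lighter on disjointness bookkeeping but leans on the nontrivial equivalence of order and band projections from the cited work of Glueck, whereas the paper's argument is self-contained and delivers the extra fact $D=B^{\t{d}}$ explicitly (your route recovers it too, as $D=\ker P=B^{\t{d}}$, but only after the fact). One cosmetic point: your citation of Proposition~\ref{properties.1} is unnecessary and slightly misplaced --- that proposition assumes $X$ Archimedean, which Theorem~\ref{bandsareprojectbands} does not --- but the step you actually use is exactly the definition of pervasiveness applied to $b^-\in Y_+\setminus\left\{0\right\}$, so there is no gap.
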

\begin{proof}
	Let $B,D\sub X$ be two ideals with $X=B\oplus D$. Assume, on the  
	contrary, that $B\not\perp D$. Then there exist two elements $b\in B$  
	and $d\in D$ such that $b\not\perp d$. For a vector lattice cover  
	$(Y,i)$ of $X$, by Proposition~\ref{prelim.36} it follows $i(b)\not\perp i(d)$. We obtain  
	$0<|i(b)|\Inf|i(d)|$. Since $X$ is pervasive, there exists $x\in X$  
	with $0<i(x)\leq |i(b)|\Inf|i(d)|$. We show $x\in B$. Indeed,  
	$i(x)\leq |i(b)|$ is equivalent to $\left\{i(x)\right\}^u\supseteq  
	\left\{i(b),-i(b)\right\}^u$. Taking the intersection of these sets of  
	upper bounds with $X$ we obtain due to $x>0$ that  
	$\left\{x,-x\right\}^u=\left\{x\right\}^u\supseteq  
	\left\{b,-b\right\}^u$ in $X$. It follows $x\in B$. Similarly, we  
	obtain $x\in D$. From the uniqueness of the decomposition $X=B\oplus  
	D$ we have $B\cap D=\left\{0\right\}$, which implies $x=0$, a  
	contradiction. We thus obtain $B\perp D$. It follows $D\sub B^{\t{d}}$.
	
	To establish $B^{\t{d}}\sub D$, let $x\in B^{\t{d}}$. From $X=B\oplus  
	D$ it follows $x=b+d$ with $b\in B$ and $d\in D\sub B^{\t{d}}$. Due to  
	$x,d\in B^{\t{d}}$ we obtain $b=x-d\in B^{\t{d}}$. This yields $b\in  
	B\cap B^{\t{d}}=\left\{0\right\}$ and therefore $x=d\in D$. This  
	establishes $B^{\t{d}}\sub D$. We conclude $B^{\t{d}}=D$.
\end{proof}

We show a result similar to Theorem~\ref{bandsareprojectbands}, where the ideals are directed and the condition on the underlying space is weaker. A pre-Riesz space $X$ is called \emph{weakly pervasive} if for every $b,d\in X$ with $b,d>0$ and $b\not\perp d$ there exists $x\in X$ such that $0<x\leq b,d$, see \c[Definition~8 and Lemma~9]{05}. There it is also shown that every pervasive pre-Riesz space is weakly pervasive.

\begin{theorem}\label{directedbandsareprojectbands}
Let $X$ be a weakly pervasive pre-Riesz space and $B,D\sub X$ two directed ideals such that $X=B\oplus D$. Then $B$ is a projection band and $X=B\oplus B^{\t{d}}$.
\end{theorem}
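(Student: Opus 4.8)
The plan is to follow the architecture of the proof of Theorem~\ref{bandsareprojectbands}: first show $B\perp D$, which yields $D\sub B^{\t{d}}$, and then establish the reverse inclusion $B^{\t{d}}\sub D$. The second inclusion carries over verbatim from Theorem~\ref{bandsareprojectbands}, since it uses only $X=B\oplus D$, $D\sub B^{\t{d}}$ and $B\cap B^{\t{d}}=\{0\}$, and does not invoke pervasiveness at all: for $x\in B^{\t{d}}$ write $x=b+d$ with $b\in B$ and $d\in D\sub B^{\t{d}}$, deduce $b=x-d\in B^{\t{d}}$, hence $b\in B\cap B^{\t{d}}=\{0\}$ and $x=d\in D$. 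So the whole argument reduces to proving $B\perp D$.

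For the disjointness I would argue by contradiction and assume there are $b\in B$ and $d\in D$ with $b\not\perp d$; note that then $b,d\neq 0$. Fix a vector lattice cover $(Y,i)$. By Proposition~\ref{prelim.36} we have $i(b)\not\perp i(d)$ in $Y$, i.e.\ $|i(b)|\Inf|i(d)|>0$. The key new step, which replaces the direct appeal to pervasiveness in Theorem~\ref{bandsareprojectbands}, is to pass to \emph{positive} representatives using directedness. Since $B$ is directed, the pair $b,-b\in B$ has an upper bound $b'\in B$, so $i(b')\geq i(b)\Sup(-i(b))=|i(b)|$; likewise directedness of $D$ yields $d'\in D$ with $i(d')\geq|i(d)|$. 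Both dominating elements are strictly positive (because $|i(b)|,|i(d)|>0$), and
\[i(b')\Inf i(d')\geq |i(b)|\Inf|i(d)|>0,\]
so $b'\not\perp d'$ with $b',d'>0$ by Proposition~\ref{prelim.36}. Now weak pervasiveness applies to the positive pair $b',d'$ and produces $x\in X$ with $0<x\leq b'$ and $0<x\leq d'$.

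It remains to show $x\in B\cap D$, which forces $x=0$ and contradicts $x>0$. Here I would use solidity of $B$ directly in $X$: for any $w\in\{b',-b'\}^u$ one has $w\geq b'\geq x$, and since $w\geq b'>0>-x$ also $w\geq -x$, whence $\{x,-x\}^u\supseteq\{b',-b'\}^u$; solidity of the ideal $B$ (applied with $y=b'\in B$) then gives $x\in B$. The same reasoning with $d'$ gives $x\in D$, so $x\in B\cap D=\{0\}$ by uniqueness of the decomposition, a contradiction. This proves $B\perp D$ and completes the argument, since $B=B^{\t{dd}}$ is a band and $X=B\oplus B^{\t{d}}$ then holds by the established equality $B^{\t{d}}=D$. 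The step I expect to be the crux is the passage from an arbitrary non-disjoint pair $b,d$ to a positive non-disjoint pair $b',d'$: weak pervasiveness only speaks about strictly positive elements, so directedness of the two ideals is precisely the hypothesis needed to manufacture positive dominating elements while preserving non-disjointness through the cover.
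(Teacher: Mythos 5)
Your proposal is correct, and it shares the paper's overall skeleton: prove $B\perp D$ to get $D\sub B^{\t{d}}$, then obtain $B^{\t{d}}\sub D$ by the purely algebraic argument (which, as you rightly observe, uses only $X=B\oplus D$ and $B\cap B^{\t{d}}=\{0\}$ and carries over verbatim). Where you genuinely diverge is in how directedness enters the disjointness step. The paper never leaves $X$ and never touches the vector lattice cover in this proof: it first shows $b\perp d$ for \emph{positive} pairs $b\in B_+$, $d\in D_+$, where weak pervasiveness applies immediately (the resulting $0<\tilde{x}\leq b,d$ lies in $B\cap D=\{0\}$ by solidity, a contradiction), and then extends to arbitrary elements using that disjoint complements are linear subspaces and that directed ideals satisfy $B=B_+-B_+$ and $D=D_+-D_+$. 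You instead run a single contradiction from an arbitrary non-disjoint pair $(b,d)$ and use directedness to manufacture positive dominating elements $b'\geq b,-b$ in $B$ and $d'\geq d,-d$ in $D$, passing through the cover to preserve non-disjointness via $i(b')\Inf i(d')\geq |i(b)|\Inf |i(d)|>0$ before invoking weak pervasiveness on $(b',d')$. Both mechanisms are sound; the paper's is marginally leaner (no cover needed, and the positive-pair-first structure mirrors how such arguments are usually modularized), while yours has the merit of spelling out the solidity computation $\{x,-x\}^u\supseteq\{b',-b'\}^u$ for $0<x\leq b'$ that the paper compresses into ``due to $B,D$ being ideals we obtain $\tilde{x}\in B,D$''. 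One small remark applying to both you and the paper: the conclusion that $B$ is a \emph{projection band} tacitly requires $B=B\dd$; your parenthetical assertion of this deserves its one-line check (for $x\in B\dd$ write $x=b+d$ with $d\in D=B^{\t{d}}$; then $d=x-b\in B\dd\cap B^{\t{d}}=\{0\}$, so $x=b\in B$), but since the paper omits it as well, this is not a gap relative to the paper's standard.
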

\begin{proof}
Let first $b\in B_+$. We show for every $d\in D_+$ that $b\perp d$. Let $d\in D_+$. Assume, on the contrary, that $b\not\perp d$. Since $X$ is weakly pervasive, there exists $\tilde{x}\in X_+$ with $0< \tilde{x}\leq b,d$. Due to $B,D$ being ideals we obtain $\tilde{x}\in B,D$. From the uniqueness of the decomposition $X=B\oplus D$ we have $B\cap D=\left\{0\right\}$, which is a contradiction to $\tilde{x}>0$. It follows $b\perp d$. Since $X$ is a pre-Riesz space, disjoint complements are linear subspaces of $X$. 
Since $D$ is directed, it follows $b\perp D$. The directedness of $B$ implies $B\perp D$. We conclude $D\sub B^{\t{d}}$.
	
To establish $B^{\t{d}}\sub D$, let $x\in B^{\t{d}}$. From $X=B\oplus D$ it follows $x=b+d$ with $b\in B$ and $d\in D\sub B^{\t{d}}$. Due to $x,d\in B^{\t{d}}$ we obtain $b=x-d\in B^{\t{d}}$. This yields $b\in B\cap B^{\t{d}}=\left\{0\right\}$ and therefore $x=d\in D$. This establishes $B^{\t{d}}\sub D$. We conclude $B^{\t{d}}=D$.
\end{proof}

The following example demonstrates that, in general, Theorem~\ref{bandsareprojectbands} is not true if $X$ is not pervasive, and Theorem~\ref{directedbandsareprojectbands} is not true if one of the ideals is not directed.

\begin{example}
\textit{A weakly pervasive pre-Riesz space $X$ with two bands $B,C \sub X$ such that $X=B \oplus C$ and $C \not= B^{\t{d}}$.}
	
Let $\ell^\infty(\ZZ)$ be the vector space of bounded sequences on $\ZZ$ endowed with the pointwise order. In \c[Example 5.2]{2} it is shown that
$Y:=\left\{(y_k)_k \in \ell^\infty(\ZZ) \mid| \Lim{k\to\infty} y_k \t{ exists}\right\}$ is a vector lattice and its linear subspace
\[X:=\left\{(x_k)_k \in \ell^\infty(\ZZ) \mid| L:=\lim_{k\to\infty} x_k \t{ exists and } \sum_{k=1}^\infty \frac{x_{-k}}{2^k} = L\right\}\] is a pre-Riesz space and order dense in $Y$. Thus by Proposition~\ref{prelim.36} disjointness is pointwise in $X$. In \c[Example~10]{05} it is established that $X$ is weakly pervasive.
Consider the following subspaces of $X$:
\[B:=\left\{(x_k)_k \in X \mid| x_k=0 \t{ for } k\leq -2\right\} \hs\t{ and }\hs C:=\left\{(x_k)_k \in X \mid| x_k=0 \t{ for } k\geq 0\right\}.\]
We first show that $B$ is a directed band.
Let $a=(a_k)_k$, $b=(b_k)_k\in B$. For $k\leq -2$ we have $a_k=0$, which implies $\Lim{k\to\infty} a_k = \sum_{k=1}^\infty \frac{a_{-k}}{2^k} = \frac{1}{2}a_{-1}$. Similarly, $\frac{1}{2}b_{-1}=\Lim{k\to\infty} b_k$. Define a bounded sequence $c=(c_k)_k$ by $c_k:=\frac{1}{2}\max\left\{a_k,b_k\right\}$ for every $k\in\ZZ$. Then $\Lim{k\to\infty} c_k=\frac{1}{2}\max\left\{a_{-1}, b_{-1}\right\}=\frac{1}{2}c_{-1}$. Moreover, for $k\in \ZZ_{\leq -2}$ we have $c_k=0$. It follows $\sum_{k=1}^\infty \frac{c_{-k}}{2^k}=\frac{1}{2}c_{-1}=\Lim{k\to\infty} c_k$, i.e.\ $c\in X$. Clearly, $c\in B$ and $a,b\leq c$. Thus $B$ is directed. We show that $B$ is a band. For every $n\in\NN_{\geq 1}$ define a sequence $x^{(n)}=(x^{(n)}_k)_k\in X$ with $x^{(n)}_k:=0$ for $k\in\ZZ\ohne\left\{-n,-(n+1)\right\}$ and $x^{(n)}_{-n}:=1$, $x^{(n)}_{-(n+1)}:=-2$. Then for every $n\in\NN_{\geq 1}$ we have $\sum_{k=1}^\infty \frac{x^{(n)}_{-k}}{2^k}=\frac{x^{(n)}_{-n}}{2^n}+\frac{x^{(n)}_{-(n+1)}}{2^{n+1}}=0=\Lim{k\to\infty} x^{(n)}_k$. That is, $x^{(n)}\in X$. Moreover, for every $n\in\NN_{\geq 2}$ we have $x^{(n)}\perp B$. For the set $S:=\left\{x^{(n)}\mid|n\in\NN_{\geq 2}\right\}\sub X$ we then have $S^{\t{d}}=B$. It follows that $B$ is a band.
	
Next we establish that $C$ is a non-directed band. To show that $C$ is not directed, consider the two elements $x^{(1)}, x^{(2)}\in C$. Assume that there exists $c=(c_k)_k\in C$ with $x^{(1)}, x^{(2)}\leq c$. Then $c_{-1}, c_{-2}\geq 1$ and $c_{-3}\geq -2$. Moreover $c_k\geq 0$ for every $k\in\ZZ_{\leq -3}$ and due to $c\in C$ we have $c_k=0$ for $k\in\ZZ_{\geq 0}$. It follows
\[\sum_{k=1}^\infty \frac{c_{-k}}{2^k}\geq \frac{c_{-1}}{2^1}+\frac{c_{-2}}{2^2}+\frac{c_{-3}}{2^3}\geq \frac{1}{2}+\frac{1}{4}-\frac{2}{8}>0,\] 
from which we obtain  $\Lim{k\to\infty} c_k >0$, a contradiction to $c_k=0$ for $k\in\ZZ_{\geq 0}$. Thus $C$ is not directed. To see that $C$ is a band, for every $n\in\NN_{\geq 0}$ define a sequence $z^{(n)}=(z^{(n)}_k)_k$ by $z^{(n)}_n:=1$ and $z^{(n)}_k:=0$ for $k\neq n$. Clearly, for every $n\in\NN_{\geq 0}$ we have $z^{(n)}\in X$. For the set $T:=\left\{z^{(n)}\mid|n\in\NN_{\geq 0}\right\}\sub X$ we obtain $T^{\t{d}}=C$. It follows that $C$ is a band.
	
We show $B\not\perp C$. To that end, let $b=(b_k)_k$ be defined for $k\in\ZZ_{\leq -2}$ by $b_k:=0$, for $k\in\ZZ_{\geq 0}$ by $b_k=\frac{1}{2}$ and $b_{-1}:=1$. Then $b\in B$. In $Y$ we have $i(b)\not\perp i(x^{(1)})$. By Proposition~\ref{prelim.36} for the sequence $x^{(1)}\in C$  and $b\in C$ we have $b\not\perp x^{(1)}$. It follows $B\not\perp C$.
	
Next we show $X=B\oplus C$. Let $x=(x_k)_k\in X$. Define $b=(b_k)_k, c=(c_k)_k\in X$ by
\begin{align*}
& b_k:=0 \t{ for } k\in\ZZ_{\leq 2}, &&\quad\t{ and }\quad && c_k:=x_k \t{ for } k\in\ZZ_{\leq 2},\\
& b_k:=x_k \t{ for } k\in\ZZ_{\geq 0}, &&  &&   c_k:=0 \t{ for } k\in\ZZ_{\geq 0},\\
& b_{-1}:=2\lim_{k\to\infty}x_k &&  &&  c_{-1}:=-2\sum_{k=2}^\infty \frac{x_{-k}}{2^k}.
\end{align*}
Then $b\in B$ and $c\in C$. Clearly, for $k\in\ZZ\ohne\left\{-1\right\}$ we have $x_k=b_k+c_k$. Moreover, $x_{-1}=2\frac{x_{-1}}{2}=2\big(\Lim{k\to\infty}x_k -\sum_{k=2}^\infty \frac{x_{-k}}{2^k}\big) = b_{-1}+c_{-1}$. That is, $x=b+c$. We show that this decomposition is unique. Let $\tilde{b}=(\tilde{b}_k)_k\in B$ and $\tilde{c}=(\tilde{c}_k)_k\in C$ be such that $x=\tilde{b}+\tilde{c}$. From $b,\tilde{b}\in B$ and $c,\tilde{c}\in C$ it follows for $k\in\ZZ\ohne\left\{-1\right\}$ that $b_k=\tilde{b}_k$ and $c_k=\tilde{c}_k$. It is left to show $b_{-1}=\tilde{b}_{-1}$ and $c_{-1}=\tilde{c}_{-1}$. As $\tilde{b}\in B$, we have $\sum_{k=1}^\infty \frac{\tilde{b}_{-k}}{2^k}= \frac{\tilde{b}_{-1}}{2}$. Since for $k\in\ZZ_{\geq 0}$ we have $x_k=\tilde{b}_k+\tilde{c}_k=\tilde{b}_k$, we obtain
\[\frac{\tilde{b}_{-1}}{2} = \sum_{k=1}^\infty \frac{\tilde{b}_{-k}}{2^k} = \lim_{k\to\infty} \tilde{b}_k = \lim_{k\to\infty} x_k,\]
from which it follows $\tilde{b}_{-1} = 2\Lim{k\to\infty} x_k = b_{-1}$. Moreover, as $\tilde{c}\in C$, we have $0=\Lim{k\to\infty}\tilde{c}_k = \sum_{k=1}^\infty \frac{\tilde{c}_{-k}}{2^k}= \frac{\tilde{c}_{-1}}{2} + \sum_{k=2}^\infty \frac{\tilde{c}_{-k}}{2^k}$.
As $\tilde{c}_k= x_k$ for $k\in\ZZ_{\leq -2}$, we get $\tilde{c}_{-1} = -2\sum_{k=2}^\infty \frac{\tilde{c}_{-k}}{2^k} = -2\sum_{k=2}^\infty \frac{x_{-k}}{2^k}= c_{-1}$. We conclude $b=\tilde{b}$ and $c=\tilde{c}$.

Finally, we show that $X$ is not pervasive.
To that end, we use the characterization of pervasiveness in Proposition~\ref{properties.1}~\ref{properties.1.it4}. Consider the sequence $x^{(1)}\in X$ defined above. The sequence $i(x^{(1)})\Sup 0 \in Y$ is zero in every coordinate, except in $(i(x^{(1)})\Sup 0)_{-1}=1$. Thus $i(x^{(1)})\Sup 0>0$. Assume, on the contrary, that $X$ is pervasive, that is, there is $x\in X$ with $0< i(x)\leq i(x^{(1)})\Sup 0$. Then for every $k\in\ZZ\ohne\left\{-1\right\}$ we have $x_k=0$ and it follows $\Lim{k\to\infty} x_k =0=\sum_{k=1}^\infty \frac{x_{-k}}{2^k}=\frac{x_{-1}}{2}$. We obtain $x=0$, a contradiction. We conclude that $X$ is not pervasive. 
\end{example}

\section{Extension and restriction of projection bands}\label{projectionbands}
In view of the embedding technique for pre-Riesz spaces, the question arises how projection bands in a pre-Riesz space $X$ are related to  projection bands in a vector lattice cover $Y$ of $X$. The restriction and extension properties for bands in pervasive pre-Riesz spaces suggest that there projection bands are linked in a similar way. 
We show that indeed the extension property for projection bands is satisfied in Archimedean pervasive pre-Riesz spaces, whereas for the restriction property stronger assumptions are needed.  

The next statement is a technical result that we use to establish Theorem~\ref{extideal_extband_coincide}.
\begin{lemma}\label{idealinclusions}
Let $X$ be an Archimedean pervasive pre-Riesz space and $(Y,i)$ a vector lattice cover of $X$. Let $B\sub X$ be a band such that $X=B\oplus B^{\t{d}}$. Moreover, let $\hat{B}$ be the
smallest extension ideal of $B$ and $A$ the smallest extension ideal of $B^{\t{d}}$ in $Y$.
Then $A\sub \hat{B}^{\t{d}}$ and $\hat{B}\sub A^{\t{d}}$.
\end{lemma}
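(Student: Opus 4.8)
The plan is to reduce both inclusions to a single disjointness statement and then exploit the majorizing property supplied by Lemma~\ref{majorizing}. First I observe that the two assertions $A\sub\hat B^{\t d}$ and $\hat B\sub A^{\t d}$ are equivalent: each simply says that every element of $A$ is disjoint from every element of $\hat B$, and disjointness is symmetric. Hence it suffices to fix $h\in\hat B$ and $a\in A$ and to prove $h\perp a$ in $Y$. Since $Y$ is a vector lattice, this amounts to showing $|h|\Inf|a|=0$.

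Next I would assemble the two ingredients. Because $X=B\oplus B^{\t d}$, the band $B$ is a projection band, so by Theorem~\ref{Glueck}~\ref{Glueck.it3} its disjoint complement $B^{\t d}$ is a projection band as well, and by Theorem~\ref{Glueck}~\ref{Glueck.it2} both $B$ and $B^{\t d}$ are directed. They are thus directed ideals, which is exactly the hypothesis needed for their smallest extension ideals $\hat B$ and $A$ to be available. Applying Lemma~\ref{majorizing} to $I=B$ and to $I=B^{\t d}$ then yields that $i(B)$ is majorizing in $\hat B$ and $i(B^{\t d})$ is majorizing in $A$.

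The heart of the argument is a domination step. Given $h\in\hat B$, the element $|h|$ again lies in the ideal $\hat B$, so by the majorizing property there is $x\in i(B)$ with $|h|\leq x$; since $|h|\geq 0$ this forces $x\geq 0$. In the same way I obtain $w\in i(B^{\t d})$ with $w\geq 0$ and $|a|\leq w$. Writing $x=i(b)$ and $w=i(b')$ with $b\in B$ and $b'\in B^{\t d}$, the defining disjointness $b\perp b'$ transfers to $Y$ by Proposition~\ref{prelim.36}, giving $x\perp w$; as $x,w\geq 0$ this reads $x\Inf w=0$. Therefore $0\leq|h|\Inf|a|\leq x\Inf w=0$, whence $h\perp a$, which establishes the claim.

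The step I expect to require the most care is recognizing why disjointness of the generating images $i(B)$ and $i(B^{\t d})$ does not by itself yield disjointness of the generally larger ideals $\hat B$ and $A$: the extension ideals may contain elements lying far from $i(B)$ and $i(B^{\t d})$. It is precisely the majorizing conclusion of Lemma~\ref{majorizing}---which in turn rests on the directedness produced by the decomposition $X=B\oplus B^{\t d}$---that lets me dominate an arbitrary element of $\hat B$ (resp.\ $A$) by a positive element of $i(B)$ (resp.\ $i(B^{\t d})$) and so push the lattice computation down to the images, where Proposition~\ref{prelim.36} applies.
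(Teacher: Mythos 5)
Your proof is correct, and at the decisive step it takes a genuinely shorter route than the paper's. The two arguments share their skeleton: from $X=B\oplus B^{\t{d}}$ both conclude via Theorem~\ref{Glueck} that $B$ and $B^{\t{d}}$ are directed (and both simply assert that these bands are ideals), so that the smallest extension ideals $\hat{B}$ and $A$ exist, and both invoke Lemma~\ref{majorizing} to get $i(B)$ majorizing in $\hat{B}$ and $i(B^{\t{d}})$ majorizing in $A$. The paper, however, uses its majorant $i(b)\geq y_1$ only to localize approximants: by pervasiveness (Proposition~\ref{properties.1}) it writes each $y_1\in\hat{B}_+$ as $y_1=\sup\left\{z\in i(B)\mid| 0\leq z\leq y_1\right\}$, and then applies Theorem~\ref{closedness.7supperp} twice --- first to obtain $\hat{B}\perp i(B^{\t{d}})$, then again to obtain $\hat{B}\perp A$ --- pushing disjointness through suprema. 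You instead use the majorant directly: $|h|\leq i(b)$ and $|a|\leq i(b')$ with $b\in B_+$, $b'\in (B^{\t{d}})_+$, where $i(b)\Inf i(b')=0$ by Proposition~\ref{prelim.36}, so $0\leq |h|\Inf |a|\leq i(b)\Inf i(b')=0$; that is, you exploit the downward inheritance of disjointness in the vector lattice $Y$ (here you tacitly use that $|h|\in\hat{B}$ and $|a|\in A$, which is fine since ideals in a vector lattice are solid). This squeeze replaces both the sup-representation and the two appeals to Theorem~\ref{closedness.7supperp}, and it makes visible that --- granted the standing fact that $B$ and $B^{\t{d}}$ are ideals --- the disjointness conclusion needs only directedness and majorization, not the Archimedean and pervasiveness hypotheses that power Proposition~\ref{properties.1} and Theorem~\ref{closedness.7supperp}. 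What the paper's longer detour buys is the order-density formula $y_1=\sup\left\{z\in i(B)\mid| 0\leq z\leq y_1\right\}$ as a by-product, which is of the same nature as the density and majorization statements exploited afterwards in Theorem~\ref{extideal_extband_coincide} and Theorem~\ref{closedness.1002}; your argument, being a pure domination argument, yields only the disjointness itself --- but that is all the lemma claims.
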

\begin{proof}
The bands $B$ and $B^{\t{d}}$ are ideals and by Theorem~\ref{Glueck}\ref{Glueck.it2} directed. Therefore in $Y$ there exist the smallest extension ideals $\hat{B}$ of $B$ and  $A$ of $B^{\t{d}}$. 

Let $y_1\in\hat{B}_+$. Due to $X$ being Archimedean and pervasive, by Proposition~\ref{properties.1} we have $y_1=\sup\left\{z\in i(X)\mid| 0\leq z\leq y_1\right\}$. Since $B$ is a directed ideal, by Lemma~\ref{majorizing} the subspace $i(B)$ is majorizing in $\hat{B}$. Thus there exists $b\in B$ with $0\leq y_1\leq i(b)$. For every $x\in X$ with $0\leq i(x) \leq y_1$, due to $0\leq x\leq b$ it follows $x\in B$. This yields
\begin{equation}\label{extideal_extband_coincide.eq1}
y_1=\sup\left\{z\in i(B)\mid| 0\leq z\leq y_1\right\}.
\end{equation}
Since $B$ is a band in $X$, for every $x\in B$ and $d\in B^{\t{d}}$ we have $x\perp d$, which implies $\left\{z\in i(B)\mid| 0\leq z\leq y_1\right\}\perp i(d)$.
Applying Theorem~\ref{closedness.7supperp} in the vector lattice $Y$, for every $d\in B^{\t{d}}$ we obtain $y_1\perp i(d)$. From the fact that $\hat{B}$ is directed we conclude
\begin{equation}\label{extideal_extband_coincide.eq2}
\forall y_1\in \hat{B} \hs\hs\forall d\in B^{\t{d}}\colon y_1\perp i(d).
\end{equation}
As the roles of $B$ and $B^{\t{d}}$ are interchangeable, similarly to \eqref{extideal_extband_coincide.eq1} for every $y_2\in A_+$ we obtain $y_2=\sup\left\{z\in i(B^{\t{d}})\mid| 0\leq z\leq y_2\right\}$. Due to \eqref{extideal_extband_coincide.eq2}, for every $y_1\in \hat{B}$ we have $y_1\perp \left\{z\in i(B^{\t{d}})\mid| 0\leq z\leq y_2\right\}$. By Theorem~\ref{closedness.7supperp} it follows $y_1\perp y_2$. Since $A$ is directed we conclude that for every $y_1\in \hat{B}$ and every $y_2\in A$ we have $y_1\perp y_2$.
It follows $A\sub \hat{B}^{\t{d}}$ and $\hat{B}\sub A^{\t{d}}$.
\end{proof}

\begin{theorem}\label{extideal_extband_coincide}
Let $X$ be an Archimedean pervasive pre-Riesz space and $B\sub X$ a band such that $X=B\oplus B^{\t{d}}$. Let $(Y,i)$ be a vector lattice cover of $X$ and $\hat{B}$ the smallest extension ideal of $B$ in $Y$. Then we have the following.
\begin{enumerate}
\itemsep0em
\item\label{extideal_extband_coincide.it1} $Y=\hat{B}\oplus\hat{B}^{\t{d}}$,
\item\label{extideal_extband_coincide.it2} $\hat{B}$ equals the smallest extension band of $B$,
\item\label{extideal_extband_coincide.it3a} $i(B)$ is majorizing in $\hat{B}$ and $i(B^{\t{d}})$ is majorizing in $\hat{B}^{\t{d}}$,
\item\label{extideal_extband_coincide.it3b} $\hat{B}^{\t{d}} = \widehat{B^{\t{d}}}$,
\item\label{extideal_extband_coincide.it3} $\hat{B}$ coincides with every extension band and every extension ideal of $B$,
\item\label{extideal_extband_coincide.it4} $\hat{B}$ and $\hat{B}^{\t{d}}$ are vector lattice covers of $B$ and $B^{\t{d}}$, respectively, and both $B$ and $B^{\t{d}}$ are pervasive.
\end{enumerate}
\end{theorem}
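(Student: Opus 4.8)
The plan is to establish \ref{extideal_extband_coincide.it1} first and to derive the remaining items from it. Let $A$ be the smallest extension ideal of $B^{\t{d}}$, which exists because $B^{\t{d}}$ is a directed ideal by Theorem~\ref{Glueck}\ref{Glueck.it2}. Lemma~\ref{idealinclusions} gives $A\subseteq\hat{B}^{\t{d}}$ and $\hat{B}\subseteq A^{\t{d}}$, so $\hat{B}\perp A$ and $\hat{B}\cap A=\{0\}$. I would prove the stronger statement $Y=\hat{B}\oplus A$; once this is available, \ref{extideal_extband_coincide.it3b} follows at once, since for $z\in\hat{B}^{\t{d}}$ the decomposition $z=u+w$ with $u\in\hat{B}$ and $w\in A\subseteq\hat{B}^{\t{d}}$ forces $u=z-w\in\hat{B}\cap\hat{B}^{\t{d}}=\{0\}$, whence $\hat{B}^{\t{d}}=A$ and $Y=\hat{B}\oplus\hat{B}^{\t{d}}$.

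To decompose a fixed $y\in Y_+$, I would use pervasiveness in the form of Proposition~\ref{properties.1}\ref{properties.1.it3}, giving $y=\sup\left\{i(x)\mid| x\in X,\ 0\leq i(x)\leq y\right\}$ in $Y$. For each such $x$, writing $x=b+d$ with $b\in B$ and $d\in B^{\t{d}}$, Proposition~\ref{prelim.36} yields $i(b)\perp i(d)$, and since $i(x)=i(b)+i(d)\geq 0$, Lemma~\ref{prelim.0}\ref{prelim.0.it2} gives $i(b),i(d)\geq 0$, both bounded by $y$. Taking finite suprema and using that $\hat{B}$ and $A$ are ideals (hence sublattices) of $Y$ containing $i(B)$ and $i(B^{\t{d}})$, I obtain increasing nets $(u_\alpha)$ in $\hat{B}_+$ and $(t_\alpha)$ in $A_+$ with $u_\alpha\perp t_\alpha$ and $u_\alpha+t_\alpha=\bigvee_k i(x_k)\uparrow y$. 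Passing to the Dedekind completion $Y^\delta$, the suprema $y_1:=\sup u_\alpha$ and $y_2:=\sup t_\alpha$ exist there, Theorem~\ref{closedness.7supperp} applied in $Y^\delta$ gives $y_1\perp y_2$, and $y_1+y_2=y$.

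The hard part is to show that $y_1$ and $y_2$ actually lie in $Y$ — a priori they live only in $Y^\delta$ — and in fact that $y_1\in\hat{B}$ and $y_2\in A$; this is the technical heart of the theorem. I would resolve it by extending the band projection. The projection $P_B\colon X\to X$ onto $B$ is order continuous by Corollary~\ref{band_projection}, and (using $X^\delta=Y^\delta$) it extends to the band projection $\bar{P}$ of $Y^\delta$ onto the band generated by $i(B)$, which is an order-continuous lattice homomorphism with $\bar{P}\,i(x)=i(P_B x)$ for $x\in X$. Since $u_\alpha\in\hat{B}\subseteq i(B)\dd$ and $t_\alpha\in A\subseteq i(B)^{\t{d}}$, order continuity of $\bar{P}$ applied to $u_\alpha+t_\alpha\uparrow y$ gives $u_\alpha=\bar{P}(u_\alpha+t_\alpha)\uparrow\bar{P}y$, so $y_1=\bar{P}y$. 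The remaining point, $\bar{P}y\in\hat{B}\subseteq Y$, is where pervasiveness is used decisively: equation~\eqref{extideal_extband_coincide.eq1} in the proof of Lemma~\ref{idealinclusions} shows that every element of $\hat{B}_+$ is the supremum \emph{in $Y$} of the elements of $i(B)$ below it, so that $i(B)$ is order dense and majorizing in $\hat{B}$; I would combine this with the order continuity above to conclude that the increasing net $(u_\alpha)$, lying in $\hat{B}$ and bounded by $y$, has its supremum already in $\hat{B}$, and that this supremum equals $\bar{P}y=y_1$. This yields $y=y_1+y_2$ with $y_1\in\hat{B}$ and $y_2=y-y_1\in A$, completing \ref{extideal_extband_coincide.it1} and \ref{extideal_extband_coincide.it3b}.

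With \ref{extideal_extband_coincide.it1} in hand the rest is routine. For \ref{extideal_extband_coincide.it2}, since $\hat{B}$ and $\hat{B}^{\t{d}}$ are ideals with $Y=\hat{B}\oplus\hat{B}^{\t{d}}$, the vector lattice result \cite[Theorem~1.41]{PosOp} makes $\hat{B}$ a projection band, in particular a band; being a band that is an extension ideal of $B$ it contains the smallest extension band $i(B)\dd$, while being the smallest extension ideal it is contained in $i(B)\dd$, so $\hat{B}=i(B)\dd$ is the smallest extension band. Item~\ref{extideal_extband_coincide.it3a} holds for $\hat{B}$ directly by Lemma~\ref{majorizing} (as $B$ is a directed ideal) and for $\hat{B}^{\t{d}}=A$ by the same lemma applied to $B^{\t{d}}$ together with \ref{extideal_extband_coincide.it3b}. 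For \ref{extideal_extband_coincide.it3}, if $J$ is any extension ideal (in particular any extension band) of $B$, then $\hat{B}\subseteq J$ and the ideal $J\cap\hat{B}^{\t{d}}$ satisfies $[J\cap\hat{B}^{\t{d}}]i=\{0\}$; since $X$ is pervasive, a nonzero ideal of $Y$ cannot meet $i(X)$ only in $0$, so $J\cap\hat{B}^{\t{d}}=\{0\}$, and then $Y=\hat{B}\oplus\hat{B}^{\t{d}}$ forces $J=\hat{B}$. Finally, \ref{extideal_extband_coincide.it4} follows from Theorem~\ref{closedness.1002}: $\hat{B}$ is an extension band in which $i(B)$ is majorizing, hence $\hat{B}$ is a vector lattice cover of $B$ and $B$ is pervasive, and symmetrically for $B^{\t{d}}$ and $\hat{B}^{\t{d}}$.
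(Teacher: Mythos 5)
Your reductions of items \ref{extideal_extband_coincide.it2}--\ref{extideal_extband_coincide.it4} to item \ref{extideal_extband_coincide.it1} are essentially correct, and in places cleaner than the paper's order of argument: deriving \ref{extideal_extband_coincide.it3b} from the strengthened decomposition $Y=\hat{B}\oplus A$, getting \ref{extideal_extband_coincide.it2} from \cite[Theorem~1.41]{PosOp}, using Lemma~\ref{majorizing} for \ref{extideal_extband_coincide.it3a}, the observation that pervasiveness forces a nonzero ideal of $Y$ to meet $i(X)$ nontrivially for \ref{extideal_extband_coincide.it3} (an argument equivalent to the paper's supremum computation $y_2=\sup i([[0,y_2]]i)=0$), and Theorem~\ref{closedness.1002} for \ref{extideal_extband_coincide.it4} are all sound. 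But the theorem stands or falls with \ref{extideal_extband_coincide.it1}, and there your argument has a genuine gap at exactly the point you flag as the technical heart. Having identified $y_1=\sup_\alpha u_\alpha=\bar{P}y$ inside $Y^\delta$, you still must show $\bar{P}y\in Y$, and the proposed justification --- ``combine the order density of $i(B)$ in $\hat{B}$ with the order continuity above'' --- does not yield this. Order density of $i(B)$ in $\hat{B}$ (which is indeed available from \eqref{extideal_extband_coincide.eq1}) gives representations of elements \emph{already known} to lie in $\hat{B}$; it provides no criterion for an element of $Y^\delta$ to belong to $Y$. And the order continuity of $\bar{P}$ was already spent in computing $y_1=\bar{P}y$ in $Y^\delta$. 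What you actually need is that an increasing net in the ideal $\hat{B}$, order bounded in $Y$, has its supremum (formed in $Y^\delta$) again in $\hat{B}$ --- and that is false for ideals in general Archimedean vector lattices: already in $Y=C[0,1]$ a bounded increasing sequence typically has its supremum in $Y^\delta\setminus Y$. Nothing in pervasiveness of $X$, as invoked, rules this out; the claim $\bar{P}y\in\hat{B}$ is equivalent to the decomposition being proved, so the argument is circular at this step.

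The paper closes \ref{extideal_extband_coincide.it1} without any completion or projection-extension machinery, by a short device you never use: since $i(X)$ is majorizing in $Y$, choose $x\in X$ with $0\leq y\leq i(x)$, split $x=x_1+x_2$ with $x_1\in B$, $x_2\in B^{\t{d}}$ using the hypothesis $X=B\oplus B^{\t{d}}$, and apply the Riesz decomposition property of the vector lattice $Y$ to $0\leq y\leq i(x_1)+i(x_2)$. This produces $y=y_1+y_2$ with $0\leq y_1\leq i(x_1)$ and $0\leq y_2\leq i(x_2)$; since $\hat{B}$ and $A$ are ideals containing $i(B)$ and $i(B^{\t{d}})$ respectively, it follows at once that $y_1\in\hat{B}$ and $y_2\in A\sub\hat{B}^{\t{d}}$ (the last inclusion by Lemma~\ref{idealinclusions}, as in your setup). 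Uniqueness then follows from the disjointness $\hat{B}\perp\hat{B}^{\t{d}}$ via the lattice identities of Lemma~\ref{prelim.0}, and general $y$ are handled through $y=y^+-y^-$. In other words, the supremum representation from Proposition~\ref{properties.1} is the right tool inside Lemma~\ref{idealinclusions} (to prove $\hat{B}\perp A$, via Theorem~\ref{closedness.7supperp}), but for the decomposition itself the RDP of $Y$ replaces your entire net/Dedekind-completion construction; if you want to salvage your write-up, substitute this RDP step for the passage through $Y^\delta$ and the extension of $P_B$, and the rest of your proposal goes through as written.
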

\begin{proof}
\ref{extideal_extband_coincide.it1}: To establish $Y=\hat{B}\oplus\hat{B}^{\t{d}}$, let first $y\in Y_+$. Since $i(X)$ is majorizing in $Y$, there exists $x\in X$ such that $0\leq y\leq i(x)$. Due to $X=B\oplus B^{\t{d}}$ there exist $x_1\in B$ and $x_2\in B^{\t{d}}$ such that $x=x_1+x_2$. The vector lattice $Y$ has the RDP, therefore $0\leq y\leq i(x) = i(x_1)+i(x_2)$ implies that there are $y_1, y_2\in Y_+$ with $0\leq y_1\leq i(x_1)$ and $0\leq y_2\leq i(x_2)$ such that $y=y_1+y_2$. From $i(x_1)\in i(B)\sub\hat{B}$ and $0\leq y_1\leq i(x_1)$ we obtain $y_1\in\hat{B}$. Let $A$ be as in Lemma~\ref{idealinclusions}. Similarly, due to $i(x_2)\in A$, with Lemma~\ref{idealinclusions} we have $y_2\in A\sub\hat{B}^{\t{d}}$. Since $\hat{B}\perp\hat{B}^{\t{d}}$, it follows that $y=y_1+y_2$ is a disjoint decomposition.

We show that this decomposition is unique. Let there be another disjoint decomposition $y=\tilde{y}_1+\tilde{y}_2$ with $\tilde{y}_1\in\hat{B}$ and $\tilde{y}_2\in\hat{B}^{\t{d}}$. Due to the disjointness of the decompositions, with Lemma~\ref{prelim.0}\ref{prelim.0.it2} we have $\tilde{y}_1,\tilde{y}_2\geq 0$. Lemma~\ref{prelim.0}\ref{prelim.0.it1} then yields
\begin{align*}
\tilde{y}_1 &=\tilde{y}_1\Inf\tilde{y}_1 + \tilde{y}_2\Inf\tilde{y}_1=(\tilde{y}_1+\tilde{y}_2)\Inf\tilde{y}_1 = y\Inf\tilde{y}_1 = \\
	&=(y_1+y_2)\Inf\tilde{y}_1 = y_1\Inf\tilde{y}_1 + y_2\Inf\tilde{y}_1 = y_1\Inf\tilde{y}_1,
\end{align*}
i.e.\ $\tilde{y}_1\leq y_1$. Similarly, $y_1\leq\tilde{y}_1$. We conclude $y_1=\tilde{y}_1$. Analogously, we obtain $y_2=\tilde{y}_2$, i.e. the disjoint decomposition is unique.

Let $y\in Y$ be arbitrary. We have $y=y^+-y^-$ with unique disjoint decompositions $y^+=y^+_1+y^+_2$ and $y^-=y^-_1+y^-_2$, where $y^+_1,y^-_1\in \hat{B}$ and $y^+_2,y^-_2\in \hat{B}^{\t{d}}$. Due to $y^+\perp y^-$ it follows that $y=(y^+_1-y^-_1)+(y^+_2-y^-_2)$ is a unique decomposition of $y$ with $(y^+_1-y^-_1)\in \hat{B}$ and $(y^+_2-y^-_2)\in \hat{B}^{\t{d}}$. We conclude $Y=\hat{B}\oplus\hat{B}^{\t{d}}$.

\ref{extideal_extband_coincide.it2}: First we establish that $\hat{B}$ is a band. We need to show $\hat{B}\dd\sub\hat{B}$. Let $y\in\hat{B}\dd$ and consider first the case $y\geq 0$. By \ref{extideal_extband_coincide.it1} there is a decomposition $y=y_1+y_2$ with $y_1\in\hat{B}$ and $y_2\in\hat{B}^{\t{d}}$. Lemma~\ref{prelim.0}\ref{prelim.0.it2} yields $y_1,y_2\geq 0$. Therefore, by Lemma~\ref{prelim.0}\ref{prelim.0.it1} and due to $y\in\hat{B}\dd$, we obtain
\[y=y\Inf(y_1+y_2) = y\Inf y_1 + y\Inf y_2 = y_1 \in\hat{B}.\]
For an arbitrary $y\in\hat{B}\dd$ with $y=y^+-y^-$ it similarly follows $y^+,-y^-\in\hat{B}$ and therefore $y\in\hat{B}$.
This implies that $\hat{B}$ is a band. Since $\hat{B}$ is the smallest extension ideal of $B$ and every extension band $D$ of $B$ is an ideal, it follows $\hat{B}\sub D$, i.e.\ $\hat{B}$ is the smallest extension band of $B$.

\ref{extideal_extband_coincide.it3a}: To establish that $i(B)$ is majorizing in $\hat{B}$ it suffices to consider positive elements. Let $y\in \hat{B}_+$. Since $i(X)$ is majorizing in $Y$ and $X=B\oplus B^{\t{d}}$, there exists an $x\in i(X)$ with $y\leq x=x_1+x_2$, where $x_1\in i(B)$ and $x_2\in i(B^{\t{d}})$. As $Y$ has the RDP, there exist $y_1,y_2\in Y$ with $y=y_1+y_2$ such that $0\leq y_1\leq x_1$ and $0\leq y_2\leq x_2$. From $0\leq y_1\leq x_1 \in i(B)\sub\hat{B}$ it follows $y_1\in\hat{B}$. Then $y_1\in i(B)\sub \hat{B}$. The uniqueness of the decomposition established in \ref{extideal_extband_coincide.it1} yields $y=y_1$ and $y_2=0$. From $0\leq y=y_1\leq x_1\in i(B)$ we conclude that $i(B)$ is majorizing in $\hat{B}$.

To establish that $i(B^{\t{d}})$ is majorizing in $\hat{B}^{\t{d}}$, let $y\in(\hat{B}^{\t{d}})_+$. Similarly to the previous case there exist $x_1,x_2\in i(X)$ and $y_1,y_2\in Y$ with $y=y_1+y_2$ such that $0\leq y_1\leq x_1\in i(B)$ and $0\leq y_2\leq x_2\in i(B^{\t{d}})$. The uniqueness of the decomposition leads to $y_1=0$ and $y=y_2 \leq x_2\in i(B^{\t{d}})$, i.e.\ $i(B^{\t{d}})$ is majorizing in $\hat{B}^{\t{d}}$.

\ref{extideal_extband_coincide.it3b}: 
To establish the inclusion $\widehat{B^{\t{d}}}\sub \hat{B}^{\t{d}}$, let $A$ be the smallest extension ideal of $B^{\t{d}}$ in $Y$. Applying \ref{extideal_extband_coincide.it2} to $B^{\t{d}}$ we obtain that the ideal $A$ equals the smallest extension band of $B^{\t{d}}$, i.e.\ $\widehat{B^{\t{d}}}=A$. Due to Lemma~\ref{idealinclusions} it follows $\widehat{B^{\t{d}}}=A \sub \hat{B}^{\t{d}}$. 

To prove the inclusion $\hat{B}^{\t{d}}\sub\widehat{B^{\t{d}}}$, we show that $\widehat{B^{\t{d}}}\ohne\hat{B}^{\t{d}}=\leer$. Let, on the contrary, $y\in\widehat{B^{\t{d}}}\ohne\hat{B}^{\t{d}}$ with $y\neq 0$. We can assume that $y>0$, otherwise consider one of the two elements $y^+,y^-$. As $y\notin \hat{B}^{\t{d}}$, we have $y\not\perp\hat{B}$. That is, there exists a positive element $z\in\hat{B}$ such that $z\Inf y >0$. Since $X$ is pervasive, there exists $x\in X_+$ with $0<i(x)\leq z\Inf y$. The relation $0<i(x) \leq y\in \widehat{B^{\t{d}}}$ implies $i(x)\in \widehat{B^{\t{d}}}$ and thus $x\in [\widehat{B^{\t{d}}}]i=B^{\t{d}}$. On the other hand, the relation $0<i(x)\leq z\in \hat{B}$ implies $i(x)\in \hat{B}$ and thus $x\in [\hat{B}]i=B$, a contradiction. We conclude $\hat{B}^{\t{d}}\sub\widehat{B^{\t{d}}}$.

\ref{extideal_extband_coincide.it3}:
First we show that every extension ideal of $B$ is contained in $\hat{B}$.
Let $I\sub Y$ be an extension ideal of $B$. Let $y\in I_+$. From \ref{extideal_extband_coincide.it1} it follows $y=y_1+y_2$ for $y_1\in\hat{B}$ and $y_2\in\widehat{B^{\t{d}}}$. Due to $X$ being pervasive, by Proposition~\ref{properties.1} we have $y_2=\sup\left\{x\in i(X)\mid| 0\leq x\leq y_2\right\} = \sup i(\left[[0,y_2]\right]i)$. From $y_2\in\widehat{B^{\t{d}}}$ it follows $[0,y_2]\sub \widehat{B^{\t{d}}}$ and thus we obtain the first inclusion $\left[[0,y_2]\right]i\sub B^{\t{d}}$. Moreover, due to $y_1\perp y_2$, Lemma~\ref{prelim.0}~\ref{prelim.0.it2} yields $y_1,y_2\geq 0$. From $0\leq y_2\leq y\in I$ we obtain $[0,y_2]\in I$. Since $I$ is an extension ideal of $B$, we obtain the second inclusion $\left[[0,y_2]\right]i \sub [I]i = B$. The two inclusions together yield $\left[[0,y_2]\right]i\sub B^{\t{d}}\cap B =\left\{0\right\}$. This leads to $y_2= \sup i(\left[[0,y_2]\right]i)=0$. We conclude that $y=y_1\in\hat{B}$, i.e.\ $I\sub \hat{B}$.

Since for every extension ideal $I$ of $B$ we have $I\sub \hat{B}$ and due to $\hat{B}$ being the smallest extension ideal of $B$, it follows $I=\hat{B}$. That is, $\hat{B}$ coincides with every extension ideal of $B$.

We show that $\hat{B}$ equals every extension band of $B$. Let $C$ be an extension band of $B$. In particular, $C$ is an extension ideal of $B$, which implies $C\sub\hat{B}$. On the other hand, by \ref{extideal_extband_coincide.it2} the ideal $\hat{B}$ is the smallest extension band of $B$, so $\hat{B}\sub C$. That is, $\hat{B}=C$.

\ref{extideal_extband_coincide.it4}: As was established in \ref{extideal_extband_coincide.it2}, due to $B$ being a directed ideal, the subspace $i(B)$ is majorizing in the band $\hat{B}$. Thus by Theorem~\ref{closedness.1002} the band $\hat{B}$ is a vector lattice cover of $B$ and $B$ is pervasive. As the roles of $B$ and $B^{\t{d}}$ are interchangeable, a similar statement follows for the band $B^{\t{d}}$ and its extension band $\hat{B}^{\t{d}}$.
\end{proof}
Note that the assumptions of Theorem~\ref{extideal_extband_coincide} do not imply that $X$ is a vector lattice, see Example~\ref{atomic.6} and Theorem~\ref{atomic.9} below.

From Theorem~\ref{extideal_extband_coincide}\ref{extideal_extband_coincide.it1} and \ref{extideal_extband_coincide.it2} we obtain the following.
\begin{corollary}\label{(E)_for_proj_bands}
Archimedean pervasive pre-Riesz spaces have the extension property (E) for projection bands.
\end{corollary}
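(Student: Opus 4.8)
The plan is to take $\hat{B}$ itself as the extending projection band, since essentially all of the work has already been carried out in Theorem~\ref{extideal_extband_coincide}. First I would unpack what (E) for projection bands demands: given a projection band $B\sub X$, that is, a band with $X=B\oplus B^{\t{d}}$, I must exhibit a projection band $J\sub Y$, that is, a band with $Y=J\oplus J^{\t{d}}$, satisfying $B=[J]i$.

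Next I would note that a suitable candidate is already available. Being a projection band, $B$ is directed by Theorem~\ref{Glueck}\ref{Glueck.it2}, hence a directed ideal, so its smallest extension ideal $\hat{B}$ exists in $Y$ and the hypotheses of Theorem~\ref{extideal_extband_coincide} are met. I would then simply read off the two relevant conclusions. Theorem~\ref{extideal_extband_coincide}\ref{extideal_extband_coincide.it1} gives $Y=\hat{B}\oplus\hat{B}^{\t{d}}$, while Theorem~\ref{extideal_extband_coincide}\ref{extideal_extband_coincide.it2} asserts that $\hat{B}$ is a band (indeed the smallest extension band of $B$); together these say precisely that $\hat{B}$ is a projection band in $Y$. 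Finally, since $\hat{B}$ is an extension band of $B$, the definition of an extension band yields $B=[\hat{B}]i$.

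Assembling these facts, $\hat{B}$ is a projection band in $Y$ whose restriction equals $B$, which is exactly property (E) for projection bands. I do not expect any genuine obstacle here: the corollary is a direct reassembly of two items of Theorem~\ref{extideal_extband_coincide}, and the only point meriting care is the bookkeeping observation that ``$\hat{B}$ is a projection band in $Y$'' decomposes into the band property supplied by item \ref{extideal_extband_coincide.it2} together with the splitting $Y=\hat{B}\oplus\hat{B}^{\t{d}}$ supplied by item \ref{extideal_extband_coincide.it1}.
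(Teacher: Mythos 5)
Your proposal is correct and coincides with the paper's own argument, which derives the corollary directly from items \ref{extideal_extband_coincide.it1} and \ref{extideal_extband_coincide.it2} of Theorem~\ref{extideal_extband_coincide} with $\hat{B}$ as the extending projection band. Your extra bookkeeping (directedness of $B$ via Theorem~\ref{Glueck}\ref{Glueck.it2} to guarantee $\hat{B}$ exists, and $B=[\hat{B}]i$ from the definition of an extension ideal) just makes explicit what the paper leaves implicit.
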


In the next example we see that the converse of Theorem~\ref{extideal_extband_coincide}\ref{extideal_extband_coincide.it1} is not true, in general.
\begin{example}\label{projband_not_restrictable}
\textit{A restriction of a projection band in a vector lattice cover need not be a projection band in the corresponding pre-Riesz space, even if the pre-Riesz space is a vector lattice.}

We call a function $x\colon [0,1]\rightarrow \RR$ \textit{piecewise right continuous}, if there exists an $n\in\NN$ and $t_0,\ldots,t_n\in [0,1]$ with $0=t_0<t_1<\ldots < t_n=1$ such that $x$ is continuous on $]t_k,t_{k+1}[$ and continuous from the right in $t_k$ for every $k\in\left\{0,n-1\right\}$.
Let $Y$ be the vector lattice of bounded piecewise right continuous functions on the interval $[0,1]$. The vector lattice $X:=C([0,1])$ of continuous functions is order dense in $Y$. That is, we can view $(Y,i)$ as a vector lattice cover of $X$, where $i$ is the identity embedding map. For the functions $y_1:=\1_{\left[0,\tfrac{1}{2}\right[}$ and $y_2:=\1_{\left[\tfrac{1}{2},1\right]}$ we have $y_1,y_2\in Y$. Consider the band $\hat{B}:=\B_{y_1}=\left\{y\in Y\mid| y(\left[\tfrac{1}{2},1\right])=0\right\}$ generated by $y_1$ in $Y$. Clearly, $Y=\hat{B}\oplus \hat{B}^{\t{d}}$ and $y_2\in \hat{B}^{\t{d}}$. Since $X$ is a vector lattice, $X$ is pervasive. Thus we have the restriction property for bands. It follows that $B:=\hat{B}\cap X$ and $B^{\t{d}}=\hat{B}^{\t{d}}\cap X$ are bands in $X$. However, we have $X\neq B\oplus B^{\t{d}}$. Indeed, for the element $x:=y_1+y_2=\1_{[0,1]}\in X$ there do not exist two continuous functions $x_1\in B$ and $x_2\in B^{\t{d}}$ such that $x=x_1+x_2$.

Notice that $B$ is not majorizing in $\hat{B}$. Clearly, the vector lattice $X$ has the RDP.
\end{example}

The following example shows that in the setting of Theorem~\ref{extideal_extband_coincide}\ref{extideal_extband_coincide.it1} the converse is not true, even if $B$ is majorizing in $\hat{B}$.
\begin{example}\label{projband_not_restrictable_noRDP}
\textit{A restriction $B$ of a projection band $\hat{B}$ in a vector lattice cover need not be a projection band in the corresponding pre-Riesz space, even if $i(B)$ is majorizing in $\hat{B}$.}

Let $PA[-1,1]$ be the vector lattice of continuous piecewise affine functions on the interval $[-1,1]$ and define $q(t):=t^2$ for every $t\in[-1,1]$. Let
\[X:=\Span\big(\left\{x\in PA[-1,1] \mid| x(0)=0\right\}\cup\left\{q\right\}\big)\]
be endowed with pointwise order.
Then $X$ is directed and Archimedean and therefore a pre-Riesz space.
Consider the sublattice $Y:=\left\{y\in C[-1,1]\mid| \exists\hs x \in X\colon |y|\leq x\right\}$ of $C[-1,1]$. 
It is immediate that $X$ is pervasive in $Y$, since for every positive $y\in Y$ there is a non-zero function $x\in PA[-1,1]$ with $x(0)=0$ such that $0<x\leq y$. Moreover, $X$ is majorizing in $Y$. It follows that $Y$ is a vector lattice cover of $X$.
We show that $X$ does not have the RDP. Indeed, define two functions $x_1,x_2$ on $[-1,1]$ by $x_1(t):=-t$ and $x_2(t):=0$ for $t\in[-1,0]$ and $x_1(t):=0$ and $x_2(t):=t$ for $t\in[0,1]$. Then $x_1,x_2\in X_+$ and $q\leq x_1+x_2$. However, there exist no elements $z_1,z_2\in X_+$ with $q=z_1+z_2$ such that $0\leq z_1\leq x_1$  and $0\leq z_2\leq x_2$.

Define two functions $q_1,q_2\in Y$ by $q_1(t):=t^2$ and $q_2(t):=0$ for $t\in[-1,0]$ and by $q_1(t):=0$ and $q_2(t):=t^2$ for $t\in[0,1]$. Then in $Y$ we have
\[\hat{B}:=\B_{q_1}=\left\{x\in Y \mid| x([0,1])=\left\{0\right\}\right\} \hs\t{ and }\hs \hat{B}^{\t{d}} = \B_{q_2}=\left\{x\in Y \mid| x([-1,0])=\left\{0\right\}\right\}.\]
It is immediate that $Y=\hat{B}\oplus\hat{B}^{\t{d}}$. Since $X$ is pervasive, it has the band restriction property. It follows that $B:=[\hat{B}]i$ is a band in $X$. Notice that $[\hat{B}^{\t{d}}]i=B^{\t{d}}$.

We have $X\neq B\oplus B^{\t{d}}$. Indeed, for $q\in X$ there do not exist $z_1\in B$ and $z_2\in B^{\t{d}}$ such that $q=z_1+z_2$.

Notice that in this example, $B$ is majorizing in $\hat{B}$.
\end{example}

In Examples~\ref{projband_not_restrictable} and \ref{projband_not_restrictable_noRDP} we saw that the conditions in the following theorem can not be omitted.
\begin{theorem}\label{projband_restrictable}
Let $X$ be an Archimedean fordable pre-Riesz space with RDP and $(Y,i)$ a vector lattice cover of $X$. Let $\hat{B}$ be a band in $Y$ such that $Y=\hat{B}\oplus\hat{B}^{\t{d}}$ and let $B:=[\hat{B}]i$. Moreover, let $i(B)$ be majorizing in $\hat{B}$ and $i(B^{\t{d}})$ be majorizing in $\hat{B}^{\t{d}}$. Then $X=B\oplus B^{\t{d}}$.
\end{theorem}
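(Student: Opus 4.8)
The plan is to prove that every $x\in X$ decomposes as $x=x_1+x_2$ with $x_1\in B$ and $x_2\in B^{\t{d}}$; since $x\perp x$ forces $x=0$, the intersection $B\cap B^{\t{d}}$ is trivial, so this already gives $X=B\oplus B^{\t{d}}$. First I would settle the structure of $B$. As $\hat{B}$ is a band, it is in particular an ideal in the vector lattice $Y$, so by the restriction property (R) for ideals the set $B=[\hat{B}]i$ is an ideal in $X$; since $X$ is fordable, (R) also holds for bands, whence $B$ is moreover a band, so that the assertion $X=B\oplus B^{\t{d}}$ is meaningful. Because $X$ is directed, it suffices to decompose an arbitrary $x\in X_+$. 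Applying the band projection onto $\hat{B}$ in $Y$ to $i(x)\geq 0$, I obtain $i(x)=y_1+y_2$ with $y_1\in\hat{B}$, $y_2\in\hat{B}^{\t{d}}$ and $0\leq y_1,y_2\leq i(x)$.

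Next I would transport this decomposition into $X$ by combining the majorizing hypotheses with the RDP. As $i(B)$ is majorizing in $\hat{B}$ and $i(B^{\t{d}})$ is majorizing in $\hat{B}^{\t{d}}$, there exist $b\in B$ and $d\in B^{\t{d}}$ with $y_1\leq i(b)$ and $y_2\leq i(d)$, and bipositivity forces $b,d\geq 0$. Then $i(x)=y_1+y_2\leq i(b)+i(d)=i(b+d)$, so $x\leq b+d$ in $X$. Applying the RDP of $X$ to $0\leq x\leq b+d$ produces $x_1,x_2\in X_+$ with $x=x_1+x_2$, $0\leq x_1\leq b$ and $0\leq x_2\leq d$. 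It remains to place the two summands. Since $B$ is an ideal and $0\leq x_1\leq b\in B$, solidity gives $x_1\in B$. For $x_2$ I would pass to the cover: from $d\in B^{\t{d}}$ and Proposition~\ref{prelim.36} we get $i(d)\in i(B)^{\t{d}}$, and since $i(B)^{\t{d}}$ is a band, hence an ideal, in the vector lattice $Y$, the relation $0\leq i(x_2)\leq i(d)$ yields $i(x_2)\in i(B)^{\t{d}}$; a second application of Proposition~\ref{prelim.36} gives $x_2\perp B$, i.e.\ $x_2\in B^{\t{d}}$. This proves $x\in B+B^{\t{d}}$ for $x\in X_+$, and the reduction from general $x$ via directedness (decomposing the positive and negative parts and subtracting) completes the argument.

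The delicate point is the precise interplay of the three hypotheses, which is exactly what Examples~\ref{projband_not_restrictable} and \ref{projband_not_restrictable_noRDP} probe. The vector-lattice components $y_1,y_2$ need not lie in $i(X)$, so they cannot be pulled back directly; the two majorizing conditions serve only to manufacture a \emph{single} upper bound $b+d\in X_+$ of $x$ whose summands already reside in $B$ and $B^{\t{d}}$, after which the RDP performs the actual splitting \emph{inside} $X$. Fordability is used solely to upgrade $B=[\hat{B}]i$ from an ideal to a band. I expect the only step that genuinely needs care to be the verification that the RDP-components land in the correct bands: this is handled by the solidity of the ideal $B$ on one side, and on the other by transferring the disjointness $x_2\perp B$ through the cover via Proposition~\ref{prelim.36}, using that disjoint complements are ideals in the vector lattice $Y$.
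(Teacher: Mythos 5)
Your proposal is correct and follows essentially the same route as the paper's proof: fordability gives the band restriction for $B=[\hat{B}]i$, the decomposition of $i(x)$ in $Y=\hat{B}\oplus\hat{B}^{\t{d}}$ is dominated via the two majorizing hypotheses by $i(b)+i(d)$ with $b\in B_+$, $d\in (B^{\t{d}})_+$, the RDP then splits $x$ inside $X$, and directedness handles general elements, with uniqueness coming from $B\cap B^{\t{d}}=\{0\}$. The only cosmetic deviation is that where the paper simply invokes that $B^{\t{d}}$ is an ideal to place $x_2$, you transfer the disjointness through the cover via Proposition~\ref{prelim.36} and the solidity of the band $i(B)^{\t{d}}$ in $Y$ --- an equally valid (indeed slightly more self-contained) justification of the same step.
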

\begin{proof}
Since $X$ is fordable, $X$ has the restriction property for bands. Thus $[\hat{B}]i=B$ is a band in $X$. Let $x\in X_+$. Due to $i(x)\in Y=\hat{B}\oplus\hat{B}^{\t{d}}$ there exist $y_1\in\hat{B}$ and $y_2 \in\hat{B}^{\t{d}}$ such that $i(x)=y_1+y_2$. 
By Lemma~\ref{prelim.0}~\ref{prelim.0.it2} we have $y_1,y_2\geq 0$. Since $i(B)$ is majorizing in $\hat{B}$ and $i(B^{\t{d}})$ is majorizing in $\hat{B}^{\t{d}}$, there exist $b_1\in B$ and $b_2\in B^{\t{d}}$ such that $i(x) \leq i(b_1)+ i(b_2)$. That is, we have $x\leq b_1+b_2$ and $b_1,b_2\geq 0$. Since $X$ has the RDP, there exist $x_1,x_2\in X$ with $x=x_1+x_2$ such that $0\leq x_1\leq b_1$ and $0\leq x_2\leq b_2$. Since $B$ and $B^{\t{d}}$ are ideals in $X$, it follows $x_1\in B$ and $x_2\in B^{\t{d}}$.

We show the uniqueness of this decomposition. Let $x\in X_+$ be such that $x=x_1+x_2 =\tilde{x}_1 + \tilde{x}_2$, where $x_1,\tilde{x}_1,x_2,\tilde{x}_2\in X_+$ with $x_1,\tilde{x}_1\in B$ and $x_2,\tilde{x}_2\in B^{\t{d}}$. Then $x_1-\tilde{x}_1\in B$ and $x_1-\tilde{x}_1 = \tilde{x}_2-x_2\in B^{\t{d}}$. Due to $B\cap B^{\t{d}}=\left\{0\right\}$ it follows $x_1-\tilde{x}_1 = 0 = \tilde{x}_2-x_2$, i.e.\ $x_1=\tilde{x}_1$ and $x_2=\tilde{x}_2$.

Now let $x\in X$. As $X$ is directed, there exist $x^{(1)},x^{(2)}\in X_+$ such that $x=x^{(1)}+x^{(2)}$. Since by the first part of the proof both $x^{(1)}$ and $x^{(2)}$ have disjoint decompositions $x^{(1)}=x^{(1)}_1+x^{(1)}_2$ and $x^{(2)}=x^{(2)}_1+x^{(2)}_2$, it follows $x=(x^{(1)}_1 + x^{(2)}_1) - (x^{(1)}_2 + x^{(2)}_2)$, where $x^{(1)}_1 + x^{(2)}_1\in B$ and $x^{(1)}_2 + x^{(2)}_2\in B^{\t{d}}$.
\end{proof}

\section{Atoms and discrete elements in pre-Riesz spaces}\label{atoms}

In the definition of an atom in a vector lattice $X$ the inequality $|x| \leq |a|$, where $a,x\in X$, is equivalent to $\left\{a,-a\right\}^{u}\sub \left\{x,-x\right\}^{u}$. For pervasive pre-Riesz spaces we make the following observation.

\begin{proposition}\label{atomic.3}
Let $X$ be a pervasive pre-Riesz space and $a\in X\ohne\left\{0\right\}$. Assume that for every $x\in X$ with $\left\{a,-a\right\}^{u}\sub \left\{x,-x\right\}^{u}$ it follows $x=\lambda a$ for some $\lambda\in\RR$. Then we have $a>0$ or $a<0$.
\end{proposition}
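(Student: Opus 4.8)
The plan is to pass to a vector lattice cover $(Y,i)$ of $X$ and argue by contradiction, the key idea being that pervasiveness manufactures a positive element of $X$ sitting underneath the positive part of $i(a)$, which is then forced by the hypothesis to be a multiple of $a$. First I would record the following translation of the hypothesis into the cover. Since $i$ is bipositive, an element $w\in X$ lies in $\left\{a,-a\right\}^u$ exactly when $i(w)\geq|i(a)|$ in $Y$, where $|i(a)|=i(a)\Sup(-i(a))$. Consequently, to feed an element $x\in X$ into the assumption it suffices to exhibit $x$ with $|i(x)|\leq|i(a)|$: then every $i(w)\geq|i(a)|$ automatically dominates $\pm i(x)$, so $\left\{a,-a\right\}^u\sub\left\{x,-x\right\}^u$ and the hypothesis applies.

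Suppose, towards a contradiction, that $a$ is neither strictly positive nor strictly negative. As $a\neq 0$ and $i$ is bipositive, this means $i(a)\not\geq 0$ and $i(a)\not\leq 0$ in $Y$; equivalently, both the positive part $i(a)\Sup 0$ and the negative part $(-i(a))\Sup 0$ are nonzero. I would now invoke pervasiveness in the form of Proposition~\ref{properties.1}\ref{properties.1.it4}: applying it to $b=a$, for which $i(a)\Sup 0>0$, yields an element $x\in X$ with $0<i(x)\leq i(a)\Sup 0$.

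This $x$ satisfies $|i(x)|=i(x)\leq i(a)\Sup 0\leq|i(a)|$, so by the translation above we have $\left\{a,-a\right\}^u\sub\left\{x,-x\right\}^u$. The hypothesis then forces $x=\lambda a$ for some $\lambda\in\RR$, whence $i(x)=\lambda\, i(a)$. But $i(x)>0$, so $\lambda\neq 0$, and $\lambda\, i(a)\geq 0$ would give $i(a)\geq 0$ if $\lambda>0$ and $i(a)\leq 0$ if $\lambda<0$ --- either possibility contradicting that both parts of $i(a)$ are nonzero. This contradiction shows that $a>0$ or $a<0$.

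The final sign analysis is routine; the step demanding the most care is the initial one, namely the faithful passage from the upper-bound inclusion $\left\{a,-a\right\}^u\sub\left\{x,-x\right\}^u$ to the inequality $|i(x)|\leq|i(a)|$ in the cover. I expect only the implication ``$|i(x)|\leq|i(a)|$ gives the inclusion'' to be needed, and this follows directly from bipositivity without any appeal to order density. This is why the real engine of the argument is pervasiveness, used to produce a suitable $x$ under $i(a)\Sup 0$, rather than the order-density property of the embedding.
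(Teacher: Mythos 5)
Your proposal is correct and takes essentially the same route as the paper: pervasiveness supplies an element $x\in X$ with $0<i(x)\leq |i(a)|$, the hypothesis forces $x=\lambda a$, and the sign of $\lambda\neq 0$ settles the claim (the paper simply applies the definition of pervasiveness directly to $|i(a)|>0$ and argues directly rather than by contradiction via $i(a)\Sup 0$). One small caution: Proposition~\ref{properties.1}\ref{properties.1.it4} is stated for \emph{Archimedean} pre-Riesz spaces, an assumption absent from the present statement, but the implication you use is immediate from the definition of pervasiveness applied to $y=i(a)\Sup 0\in Y_+\setminus\{0\}$, so your argument stands unchanged.
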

\begin{proof}
Let $(Y,i)$ be a vector lattice cover of $X$. Since $a\neq 0$, we have $|i(a)|>0$.
As $X$ is pervasive, there exists an element $x\in X$ such that $0<i(x)\leq |i(a)|$. Due to $x>0$ it follows $\left\{a,-a\right\}^{u}\sub \left\{x\right\}^u= \left\{x,-x\right\}^{u}$. By assumption there is $\lambda\in\RR$ with $x=\lambda a$ in $X$.
Due to $x> 0$ the case $\lambda>0$ leads to $a>0$ and the case $\lambda < 0$ leads to $a<0$.
\end{proof}

In view of Proposition~\ref{atomic.3} we define atoms in the following way. The definition\footnote{We stress that in \c[Definition~1.42]{AliTou} the authors introduce this concept using a different term. They call an element $a\in X_+$ for which $0\leq x\leq a$ implies that $x=\lambda a$ for some real $\lambda \geq 0$ an \emph{extremal vector} or a \emph{discrete vector} of $X_+$. However, a similar concept is well-known in the less general setting of vector lattices. Indeed, \c[\S~3]{Schae} gives a definition of an atom in a vector lattice which differs from our notion by the fact that the considered element need not be positive. However, by \c[III.13.1~a)]{Vulikh_en} for every atom $a\in Y$ it follows $a>0$ or $a<0$. Moreover, if $a<0$ is an atom, so is $-a$. These circumstances maybe clarify that, based on \c[Definition~1.42]{AliTou}, we define atoms as positive elements. Proposition~\ref{atomic.3} justifies our choice in Archimedean pervasive pre-Riesz spaces.} is based on \c[Definition~1.42]{AliTou}.
\begin{definition}\label{prelim.1a}
Let $X$ be an ordered vector space with the cone $X_+$. An element $a\in X_+\ohne\left\{0\right\}$ is said to be an \textbf{atom}
if $0\leq x\leq a$ implies that $x=\lambda a$ for some real $\lambda \geq 0$.
For the \textbf{set of all atoms} in $X$ we write
$\mathcal{A}_X:=\left\{a\in X_+\mid| a \textit{ is an atom}\right\}$.
\end{definition}

The following characterization of atoms is from \c[Lemma~1.43]{AliTou}.
\begin{proposition}\label{prelim.1b}
Let $X$ be an ordered vector space. For a non-zero vector $a\in X_+$ the following statements are equivalent.
\begin{enumerate}
\itemsep0em
\item The vector $a$ is an atom in $X$.
\item If $x_1,x_2\in X_+$ satisfy $a=x_1+x_2$, then the vectors $x_1$ and $x_2$ are linearly dependent.
\item The half-ray $\left\{\lambda a \mid| \lambda\in\RR_{\geq 0}\right\}$ is a face of $X_+$.
\end{enumerate}
\end{proposition}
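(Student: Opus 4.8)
The plan is to establish the two equivalences (i)\,$\Leftrightarrow$\,(ii) and (i)\,$\Leftrightarrow$\,(iii) separately, each by a short direct argument resting only on the defining property of an atom, the convexity of $X_+$, and the salience of the cone, i.e.\ $X_+\cap(-X_+)=\{0\}$.

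First I would treat (i)\,$\Rightarrow$\,(ii): if $a=x_1+x_2$ with $x_1,x_2\in X_+$, then $0\leq x_1\leq a$, so the atom property gives $x_1=\lambda a$ with $\lambda\geq 0$ and hence $x_2=(1-\lambda)a$; both lie on the line $\RR a$ and are therefore linearly dependent. For the converse (ii)\,$\Rightarrow$\,(i), I would start from $0\leq x\leq a$ and write $a=x+(a-x)$ with both summands in $X_+$. By hypothesis $x$ and $a-x$ are linearly dependent, so there is a nontrivial relation $c_1x+c_2(a-x)=0$. Eliminating $a-x$ gives $(c_1-c_2)x+c_2a=0$; distinguishing the case $c_2=0$ (which forces $x=0$) from $c_2\neq 0$ (which, since $a\neq 0$, forces $c_1\neq c_2$ and hence $x=\tfrac{c_2}{c_2-c_1}a$) shows that $x$ is a real multiple $\mu a$ of $a$. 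Positivity of $x$ then pins down the sign: if $\mu<0$ then $x\in X_+\cap(-X_+)=\{0\}$, so in all cases $x=\mu a$ with $\mu\geq 0$.

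Next I would prove (i)\,$\Rightarrow$\,(iii). The half-ray $R:=\{\lambda a:\lambda\geq 0\}$ is a non-empty convex subset of $X_+$, so only the face condition needs checking. Suppose $\alpha x+(1-\alpha)y=\mu a\in R$ with $x,y\in X_+$ and $0<\alpha<1$. If $\mu=0$, then $\alpha x=-(1-\alpha)y\in X_+\cap(-X_+)$, whence $x=y=0\in R$. If $\mu>0$, then $0\leq\tfrac{\alpha}{\mu}x\leq a$, so the atom property yields $\tfrac{\alpha}{\mu}x=\lambda a$ and thus $x\in R$; symmetrically $y\in R$. For (iii)\,$\Rightarrow$\,(i), given $0\leq x\leq a$ I would exhibit the apex as the convex combination $a=\tfrac12(2x)+\tfrac12\bigl(2(a-x)\bigr)$ of the positive vectors $2x$ and $2(a-x)$; since $a\in R$ and $R$ is a face, it follows that $2x\in R$, i.e.\ $x=\lambda a$ for some $\lambda\geq 0$.

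The arguments are short, and the only delicate points are bookkeeping ones: in (ii)\,$\Rightarrow$\,(i) one must convert the abstract linear dependence of $x$ and $a-x$ into the statement ``$x$ is a scalar multiple of $a$'' while discarding the degenerate scalar choices, and in (i)\,$\Rightarrow$\,(iii) one must handle the boundary case $\mu=0$, where the convex combination lands at the apex of the ray. In both places the salience $X_+\cap(-X_+)=\{0\}$ is exactly the tool that fixes the sign of the relevant scalar and disposes of the degenerate cases; the remaining steps are routine.
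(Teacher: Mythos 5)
Your proof is correct and complete: all four implications ((i)$\Rightarrow$(ii), (ii)$\Rightarrow$(i), (i)$\Rightarrow$(iii), (iii)$\Rightarrow$(i)) are carried out soundly, and the two delicate points you flag — extracting $x=\mu a$ from the dependence relation and the apex case $\mu=0$ — are handled properly via $X_+\cap(-X_+)=\{0\}$. Note that the paper itself gives no proof, quoting the result from Aliprantis–Tourky (Lemma~1.43), and your direct elementary argument is exactly the standard one for that lemma; the only cosmetic remark is that in (ii)$\Rightarrow$(i) the case $\mu<0$ is in fact vacuous (it would force $a=0$ since $x=\mu a=0$ with $\mu\neq 0$), though disposing of it through salience as you do is equally valid.
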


We already defined discrete elements in vector lattices. Based on this definition and by courtesy of the disjointness notion in ordered vector spaces given in \eqref{disjoint} we can generalize the term \emph{discrete element} to ordered vector spaces.
\begin{definition}\label{atomic.2}
Let $X$ be an ordered vector space.
An element $a\in X$, $a> 0$, is called a \textbf{discrete element}, if for every $x,z\in X_+$ with $x, z \leq a$ and $x\perp z$ it follows $x=0$ or $z=0$.
\end{definition}

In vector lattices, discrete elements and atoms are related notions.
In a (not necessarily Archimedean) vector lattice $Y$ every atom is a discrete element by \c[Lemma~26.2~(i)]{Zaa1}, but discrete elements need not be atoms, as one can see in \c[Lemma~26.2~(ii)]{Zaa1}. However, if $Y$ is an Archimedean vector lattice, then by \c[III.13.1~b)]{Vulikh_en} and by \c[Lemma~26.2~(ii)]{Zaa1} the notions of an atom and of a discrete element are equivalent. Due to this fact, in the vector lattice theory these two concepts are used interchangeably\footnote{For instance, in \c[Definition~26.1]{Zaa1} the terms \emph{atom} and \emph{discrete element} are used in a reversed way with respect to our terminology.}. We establish in Theorem~\ref{atomic.4} below that in Archimedean pervasive pre-Riesz spaces the notions of an atom and of a discrete element coincide as well. 
However, we see in Proposition~\ref{atomic.3u} and in Example~\ref{atomic.4u} below that in an Archimedean (not necessarily pervasive) pre-Riesz space every atom is discrete, but discrete elements need not be atoms.
\begin{proposition}\label{atomic.3u}
Let $X$ be a pre-Riesz space and $a\in \mathcal{A}_X$. Then $a$ is discrete.
\end{proposition}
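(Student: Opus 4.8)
The plan is to exploit the atom property to force any two disjoint sub-elements of $a$ to be scalar multiples of $a$, and then to evaluate the resulting disjointness inside a vector lattice cover, where infima are available.

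First I would fix $x,z\in X_+$ with $x,z\leq a$ and $x\perp z$; the goal is to deduce $x=0$ or $z=0$. Applying the defining property of an atom (Definition~\ref{prelim.1a}) to $0\leq x\leq a$ and to $0\leq z\leq a$ produces reals $\lambda,\mu\geq 0$ with $x=\lambda a$ and $z=\mu a$. This reduces the claim to showing that $\lambda a\perp\mu a$ can occur only when $\lambda=0$ or $\mu=0$.

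To settle this I would pass to a vector lattice cover $(Y,i)$ of $X$. By Proposition~\ref{prelim.36}, the relation $x\perp z$ in $X$ is equivalent to $i(x)\perp i(z)$ in $Y$, that is, $(\lambda\, i(a))\perp(\mu\, i(a))$. Since $a\in X_+\ohne\left\{0\right\}$ we have $a>0$, so bipositivity of $i$ gives that $u:=i(a)$ is a strictly positive element of the vector lattice $Y$, with $|u|=u$. The disjointness now reads $(\lambda u)\Inf(\mu u)=0$. Assuming without loss of generality $\lambda\leq\mu$, the inequality $\lambda u\leq\mu u$ (valid because $u\geq 0$) yields $(\lambda u)\Inf(\mu u)=\lambda u$, hence $\lambda u=0$; as $u>0$ this forces $\lambda=0$ and therefore $x=\lambda a=0$. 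By symmetry, the case $\mu\leq\lambda$ gives $z=0$. This shows that $a$ is discrete in the sense of Definition~\ref{atomic.2}.

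I do not expect a genuine obstacle here: the only two non-formal points are that disjointness is both inherited and reflected by the embedding $i$ — which is precisely Proposition~\ref{prelim.36} — and the elementary evaluation of the infimum of two positive multiples of a fixed positive vector, which is immediate from $\lambda u\leq\mu u$ (or, if one prefers, from Proposition~\ref{prelim.17bb} with $\Lambda=\left\{\lambda,\mu\right\}$). Note that the argument uses only that $X$ is a pre-Riesz space, so no pervasiveness assumption is required for this direction.
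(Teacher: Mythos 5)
Your proof is correct and follows essentially the same route as the paper: reduce $x,z$ to scalar multiples $\lambda a,\mu a$ via the atom property, transfer the disjointness to a vector lattice cover by Proposition~\ref{prelim.36}, and conclude $\lambda=0$ or $\mu=0$. The only difference is that you spell out the final elementary step $(\lambda u)\Inf(\mu u)=\min\{\lambda,\mu\}u=0$, which the paper leaves implicit.
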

\begin{proof}
Let $a\in\mathcal{A}_X$ and let elements $x,z\in X_+$ satisfy the inequalities $x,z\leq a$ and $x\perp z$. Since $a$ is an atom, there are positive real numbers $\lambda$ and $\mu$ such that $x=\lambda a$ and $z=\mu a$. Let $(Y,i)$ be a vector lattice cover of $X$. By Proposition~\ref{prelim.36} the relation $x\perp z$ implies that $i(x)\perp i(z)$, i.e.\ we have $\lambda i(a) \perp \mu i(a)$ with $i(a)\in Y_+\ohne\left\{0\right\}$. This yields $\lambda=0$ or $\mu=0$, i.e.\ $x=0$ or $z=0$. Thus $a$ is a discrete element in $X$.
\end{proof}

There are pre-Riesz spaces that contain no non-trivial disjoint elements. Consider, e.g.\ $\RR^3$ endowed with the ice-cream cone, see \c[Proposition~16]{5} and \c[Examples~4.5 and 4.6]{2}. Clearly, in such a pre-Riesz space every element $x>0$ is a discrete element, whereas the atoms are precisely the non-zero elements on the boundary of the cone. We give an example of a finite-dimensional space which contains non-trivial disjoint elements in the cone and where a discrete element need not be an atom. 

\begin{example}\label{atomic.4u}
\textit{In an Archimedean pre-Riesz space with an order unit a discrete element need not be an atom.}

Let $X:=\RR^3$ be the three-dimensional Euclidean space. Consider in $X$ the four vectors
\[v_1=\smallmath[.7]{\left(\begin{array}{c} 1\\ 0\\ 1 \end{array}\right)},\hs
	v_2=\smallmath[.7]{\left(\begin{array}{c} 0\\ 1\\ 1 \end{array}\right)},\hs
	v_3=\smallmath[.7]{\left(\begin{array}{c} -1\\ 0\\ 1 \end{array}\right)},\hs
	v_4=\smallmath[.7]{\left(\begin{array}{c} 0\\ -1\\ 1 \end{array}\right)},
\]
and let the order on $X$ be induced by the \textit{four-ray-cone} $K_4:=\t{pos}\left\{ v_1,v_2,v_3,v_4\right\}$, i.e.\ by the positive-linear hull of the vectors $v_1,\ldots,v_4$. Clearly, $(X, K_4)$ has an order unit.
In \c[Example~4.8]{2} it is shown that $(X,K_4)$ can be order densely embedded into $\RR^4$ with the standard cone $\RR^4_+$. Indeed, consider the four functionals $f_i\in X'$ on $X$, where $X'$ is identified with $\RR^3$:
\[f_1=\smallmath[.7]{\left(\begin{array}{c} -1\\ -1\\ 1 \end{array}\right)},\hs 
	f_2=\smallmath[.7]{\left(\begin{array}{c} 1\\ -1\\ 1 \end{array}\right)},\hs
	f_3=\smallmath[.7]{\left(\begin{array}{c} 1\\ 1\\ 1 \end{array}\right)},\hs
	f_4=\smallmath[.7]{\left(\begin{array}{c} -1\\ 1\\ 1 \end{array}\right)}.
\]
Then the map $i\colon X\rightarrow \RR^4$, given by
$i: x\mapsto (f_1(x),\ldots,f_4(x))^T$,
is a bipositive embedding of the pre-Riesz space $(X, K_4)$ into $(\RR^4,\RR^4_+)$. Notice that by \c[Proposition~13]{5} the space $(\RR^4,\RR^4_+)$ is the Riesz completion of $(X, K_4)$.
The pre-Riesz space $(X, K_4)$ is not pervasive, see \c[Example~4.4.18]{PRS}.

By Proposition~\ref{prelim.1b} an element $a\in X_+$ is an atom if and only if the half-ray $\left\{\lambda a\mid| \lambda\in\RR_{\geq 0}\right\}$ is a face of the cone $K_4$. Hence, $\mathcal{A}_X=\left\{\lambda v_k \mid| \lambda\in\RR_{>0}, k\in\left\{1,\ldots,4\right\}\right\}$. 
Let $d:=v_1+v_2$. Clearly, $d\notin\mathcal{A}_X$. However, $d$ is a discrete element. Indeed, by \c[Example~4.6]{1}, we have
$\left\{v_1\right\}^{\t{d}} \hspace*{-.5mm}=\hspace*{-.2mm} \Span\left\{v_3\right\}$,
$\left\{v_2\right\}^{\t{d}} \hspace*{-.5mm}=\hspace*{-.2mm} \Span\left\{v_4\right\}$,
$\left\{v_3\right\}^{\t{d}} \hspace*{-.5mm}=\hspace*{-.2mm} \Span\left\{v_1\right\}$,
$\left\{v_4\right\}^{\t{d}} \hspace*{-.5mm}=\hspace*{-.2mm} \Span\left\{v_2\right\}$ and
$\left\{x\right\}^{\t{d}} \hspace*{-.5mm}=\hspace*{-.2mm} \left\{0\right\}$ for every $x\in K_4\ohne\left(\Span\left\{v_1\right\}\cup\Span\left\{v_2\right\}\cup\Span\left\{v_3\right\}\cup\Span\left\{v_4\right\}\right)$.
Since $[0,d]$ is the convex hull of the set $\left\{0,v_1,v_2,d\right\}$,
it follows that there are no disjoint elements $x,z\in X_+\ohne\left\{0\right\}$ such that $x,z\leq d$. Therefore $d$ is discrete.
\end{example}

The next result yields for a pervasive pre-Riesz space $X$ relationships between atoms in $X$ and in its vector lattice cover.
\begin{theorem}\label{atomic.4}
Let $X$ be an Archimedean pervasive pre-Riesz space and $(Y,i)$ a vector lattice cover of $X$. Let $a\in X$.
\begin{enumerate}
\itemsep0em
\item\label{atomic.4.eq1} We have $a\in\mathcal{A}_X$ if and only if $i(a)\in\mathcal{A}_Y$.
\item\label{atomic.4.eq2} The element $a$ is discrete in $X$ if and only if $i(a)$ is discrete in $Y$.
\item\label{atomic.4.eq3} We have $a\in\mathcal{A}_X$ if and only if $a$ is a discrete element.
\item\label{atomic.4.eq4} If $a\in\mathcal{A}_X$, then we have $i(\I_a) = \I_{i(a)}$.
\item\label{atomic.4.eq5} If $\tilde{a}\in\mathcal{A}_Y$, then $\tilde{a}\in i(X)$ and $i^{-1}(\tilde{a})\in\mathcal{A}_X$.
\end{enumerate}
\end{theorem}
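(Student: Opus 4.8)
The plan is to treat the discreteness correspondence~\ref{atomic.4.eq2} as the engine of the whole theorem and to deduce the atom statements~\ref{atomic.4.eq1} and~\ref{atomic.4.eq3} from it, combined with two known facts: atom $\Leftrightarrow$ discrete in the Archimedean vector lattice $Y$ (Proposition~\ref{prelim.19.2}) and the implication atom $\Rightarrow$ discrete valid in every pre-Riesz space (Proposition~\ref{atomic.3u}). Throughout I use that $i$ is bipositive, that disjointness is preserved and reflected by $i$ (Proposition~\ref{prelim.36}), and that pervasiveness lets me slip an element of $i(X)$ underneath any nonzero positive element of $Y$.

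For the backward implication in~\ref{atomic.4.eq2} no pervasiveness is needed: if $i(a)$ is discrete and $x,z\in X_+$ satisfy $x,z\leq a$ and $x\perp z$, then $i(x),i(z)\in Y_+$ lie below $i(a)$ and are disjoint by Proposition~\ref{prelim.36}, so one of them vanishes, hence so does the corresponding $x$ or $z$. The forward implication is the crucial step. Assuming $a$ discrete in $X$ (so $a>0$), I suppose for contradiction that $i(a)$ is not discrete in $Y$, giving nonzero disjoint $u,v\in Y_+$ with $u,v\leq i(a)$; pervasiveness yields $x_u,x_v\in X$ with $0<i(x_u)\leq u$ and $0<i(x_v)\leq v$, and since $i(x_u)\Inf i(x_v)\leq u\Inf v=0$ the images are disjoint, whence $x_u\perp x_v$ by Proposition~\ref{prelim.36}, while $0<x_u\leq a$ and $0<x_v\leq a$, contradicting discreteness of $a$. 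This settles~\ref{atomic.4.eq2}. Now~\ref{atomic.4.eq1} follows: its backward direction is immediate from bipositivity (if $0\leq x\leq a$ then $0\leq i(x)\leq i(a)$ forces $i(x)=\lambda i(a)$, so $x=\lambda a$), and for the forward direction an atom is discrete in $X$ by Proposition~\ref{atomic.3u}, hence $i(a)$ is discrete by~\ref{atomic.4.eq2}, hence an atom in $Y$ by Proposition~\ref{prelim.19.2}. Part~\ref{atomic.4.eq3} is then the chain $a\in\mathcal{A}_X\Rightarrow a$ discrete (Proposition~\ref{atomic.3u}) and $a$ discrete $\Rightarrow i(a)$ discrete $\Rightarrow i(a)\in\mathcal{A}_Y\Rightarrow a\in\mathcal{A}_X$.

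For~\ref{atomic.4.eq4} I would prove $\I_a=\RR a$ and then read off $i(\I_a)=\RR\,i(a)=\I_{i(a)}$, the last equality because $i(a)$ is an atom in $Y$ by~\ref{atomic.4.eq1} and so has one-dimensional principal ideal (Proposition~\ref{prelim.19.2}). As $\RR a\sub\I_a$ is trivial, it suffices to show that $\RR a$ is solid. Take $y=\lambda a$ and $x\in X$ with $\{x,-x\}^u\supseteq\{\lambda a,-\lambda a\}^u$ in $X$; the point is to transport this to $|i(x)|\leq|\lambda|\,i(a)$ in $Y$. Indeed, every $w\in i(X)$ with $w\geq|\lambda|\,i(a)$ has the form $w=i(p)$ with $p\in\{\lambda a,-\lambda a\}^u\sub\{x,-x\}^u$, whence $w\geq|i(x)|$; since $i(X)$ is order dense in $Y$ one has $|\lambda|\,i(a)=\inf\left\{w\in i(X)\mid| w\geq|\lambda|\,i(a)\right\}$, and as $|i(x)|$ is a lower bound of this set we conclude $|i(x)|\leq|\lambda|\,i(a)$. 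Because $i(a)$ is an atom, $i(x)=\mu\,i(a)$ for some $\mu\in\RR$, so $x=\mu a\in\RR a$, establishing solidity.

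Finally, for~\ref{atomic.4.eq5} let $\tilde a\in\mathcal{A}_Y$, so $\tilde a>0$. Pervasiveness gives $x\in X$ with $0<i(x)\leq\tilde a$, and since $\tilde a$ is an atom in $Y$ this forces $i(x)=\mu\tilde a$ with $\mu>0$; hence $\tilde a=i(\tfrac1\mu x)\in i(X)$ and $a:=i^{-1}(\tilde a)>0$. The backward implication of~\ref{atomic.4.eq1} applied to $i(a)=\tilde a\in\mathcal{A}_Y$ then yields $a\in\mathcal{A}_X$. I expect the forward implication of~\ref{atomic.4.eq2} — the disjoint-lifting step powered by pervasiveness — to be the main obstacle, with the order-density transport in~\ref{atomic.4.eq4} the second most delicate point; the remaining implications are routine once bipositivity and Proposition~\ref{prelim.36} are in hand.
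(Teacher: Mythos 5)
Your proposal is correct, but its architecture genuinely differs from the paper's in three places, so a comparison is worthwhile. First, the paper proves the forward direction of \ref{atomic.4.eq1} directly: for $0<y\leq i(a)$ it invokes the supremum characterization of pervasiveness (Proposition~\ref{properties.1}) to write $y=\sup\{z\in i(X)\mid 0<z\leq y\}$, notes that each such $z$ equals $\lambda i(a)$, and then uses Proposition~\ref{prelim.17bb} to conclude $y=(\sup\Lambda)\,i(a)$; you instead route \ref{atomic.4.eq1} through \ref{atomic.4.eq2} via Propositions~\ref{atomic.3u} and~\ref{prelim.19.2}, making the discreteness transfer the engine of the whole theorem. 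Second, your forward direction of \ref{atomic.4.eq2} is a direct contradiction using only the definitional form of pervasiveness (slipping $x_u,x_v\in X$ under the nonzero disjoint pair $u,v\leq i(a)$ and pulling the disjointness back with Proposition~\ref{prelim.36}), whereas the paper again works with $y_n=\sup S_n$ and argues $S_2=\{0\}$, hence $y_2=0$. Your version is shorter and bypasses the supremum machinery entirely; what the paper's route buys is the explicit scalar in \ref{atomic.4.eq1} (the representation $y=(\sup\Lambda)\,i(a)$), which your nonconstructive chain does not exhibit. Third, for \ref{atomic.4.eq4} the paper takes the inclusion $i(\I_a)\sub\I_{i(a)}$ as immediate (in effect from the restriction property for ideals) and gets the reverse inclusion from the one-dimensionality of $\I_{i(a)}$; you instead prove $\I_a=\RR a$ outright by a solidity argument transported through order density, using $|\lambda|\,i(a)=\inf\{w\in i(X)\mid w\geq |\lambda|\,i(a)\}$ with $|i(x)|$ a lower bound of that set. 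This is more work than the paper's two lines, but it is self-contained and establishes explicitly that $\I_a$ is one-dimensional in $X$ --- a fact the paper only extracts from \ref{atomic.4.eq4} later, at the start of the proof of Theorem~\ref{atomic.8}. Parts \ref{atomic.4.eq3} and \ref{atomic.4.eq5} match the paper's proofs essentially verbatim. All your steps check out: bipositivity legitimizes each transfer, $Y$ is Archimedean because $X$ is (so Proposition~\ref{prelim.19.2} applies), and in \ref{atomic.4.eq4} the case $\lambda=0$ and the rescaling from $|i(x)|\leq|\lambda|\,i(a)$ to $i(x)\in\RR\, i(a)$ are routine, as you indicate.
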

\begin{proof}
\ref{atomic.4.eq1}: 
Let $a\in\mathcal{A}_X$. Let $y\in Y$ be such that $0< y\leq i(a)$. Since $X$ is pervasive, by Proposition~\ref{properties.1} we have for the set $S:=\left\{z\in i(X) \mid| 0< z \leq y\right\}$ the relation $y =\sup S$. For every $z\in S$ it holds that $0< z \leq y \leq i(a)$, i.e.\ for $x:=i^{-1}(z)$ we have $0< x\leq a$. Since $a\in X$ is an atom, there exists a real $\lambda > 0$ such that $x=\lambda a$. Therefore we have
$S=\left\{\lambda i(a) \mid| 0< \lambda i(a) \leq y\right\}$.
Due to $Y$ being Archimedean, the set
$\Lambda:=\left\{\lambda \in\RR \mid| 0< \lambda i(a) \leq y\right\}$
is bounded above in $\RR$ and therefore its supremum $\sup \Lambda\geq 0$ exists in $\RR$.
For every $\lambda\in \Lambda$ we have $\lambda i(a) \leq y$, thus due to Proposition~\ref{prelim.17bb} taking the supremum leads to $(\sup \Lambda) i(a) \leq y = \sup S$. On the other hand, for every $\lambda i(a)\in S$ we have $\lambda i(a)\leq (\sup \Lambda) i(a)$ and therefore $\sup S \leq (\sup \Lambda) i(a)$. It follows $y=\sup S = (\sup \Lambda) i(a)$, i.e. $i(a)$ is an atom in $Y$.

Conversely, let $a\in X$ be such that $i(a)\in \mathcal{A}_Y$. Let $x\in X$ with $0< x\leq a$. As $i(a)$ is an atom, $0<i(x)\leq i(a)$ implies that there is $\lambda\in\RR$ with $i(x) = \lambda i(a)$. It follows $x = \lambda a$, i.e. $a$ is an atom in $X$.

\ref{atomic.4.eq2}: Let $a\in X$ be a discrete element in $X$. We show that $i(a)$ is a discrete element in $Y$.
Let $y_1,y_2\in Y$ be such that $0\leq y_1, y_2\leq i(a)$ and $y_1\perp y_2$. As $X$ is pervasive, by Proposition~\ref{properties.1} for the sets
\[S_n:=\left\{z\in i(X) \mid| 0\leq z \leq y_n\right\} \quad\quad (n\in\left\{1,2\right\})\]
we have the relations $y_n= \sup S_n$. Due to $y_1\perp y_2$ we obtain for every $x_1\in X$ with $i(x_1)\in S_1$ and every $x_2\in X$ with $i(x_2)\in S_2$ the relations $x_1\perp x_2$ and $0\leq x_1, x_2 \leq a$.
Assume $y_1\neq 0$, then there is $x\in X_+\ohne\left\{0\right\}$ with $i(x)\in S_1$. As $a$ is discrete and $i(x)\neq 0$, it follows for every $x_2\in X$ with $i(x_2)\in S_2$ that $x_2=0$, i.e.\ $S_2=\left\{0\right\}$. Due to $y_2=\sup S_2$ we conclude $y_2=0$. Therefore $i(a)$ is discrete.

Conversely, let $i(a)$ be discrete. Let $x_1,x_2\in X$ be such that $0\leq x_1,x_2\leq a$ and $x_1\perp x_2$. For $y_1:=i(x_1)$ and $y_2:=i(x_2)$ we have $0\leq y_1,y_2\leq i(a)$ and $y_1\perp y_2$ by Proposition~\ref{prelim.36}. As $i(a)$ is discrete, it follows $y_1 =0$ or $y_2=0$, i.e.\ $x_1=0$ or $x_2=0$. That is, $a$ is discrete.

\ref{atomic.4.eq3}: 
If $a\in \mathcal{A}_X$, then by Proposition~\ref{atomic.3u} the element $a$ is discrete.
Conversely, let $a\in X$ be a discrete element. By \ref{atomic.4.eq2} the element $i(a)$ is discrete in $Y$ as well. By Proposition~\ref{prelim.19.2} every discrete element in an Archimedean vector lattice is an atom, i.e\ $i(a)\in\mathcal{A}_Y$. By \ref{atomic.4.eq1} it follows $a\in\mathcal{A}_X$.

\ref{atomic.4.eq4}: Let $a\in\mathcal{A}_X$. The inclusion $i(\I_a) \sub \I_{i(a)}$ is immediate. By \ref{atomic.4.eq1} we have $i(a)\in\mathcal{A}_Y$. For every $y\in \I_{i(a)}$ it follows that there exists $\lambda\in\RR$ such that $y=\lambda i(a)\in i(\I_a)$. Thus we have $i(\I_a) \supseteq \I_{i(a)}$. We conclude $i(\I_a) =\I_{i(a)}$.

\ref{atomic.4.eq5}: Let $\tilde{a}\in\mathcal{A}_Y$. Since $X$ is pervasive, there exists $x\in X$ such that $0< i(x) \leq \tilde{a}$. As $\tilde{a}$ is an atom in $Y$, there exists $\lambda\in\RR$ such that $\tilde{a}=\lambda i(x)=i(\lambda x) \in \mathcal{A}_Y$. By \ref{atomic.4.eq1} it follows $a:=i^{-1}(\tilde{a})=\lambda x\in\mathcal{A}_X$.
\end{proof}

Notice that in part \ref{atomic.4.eq1} of the above result pervasiveness of $X$ is not needed for the implication ``$\Leftarrow$''. This leads to the following observation.
\begin{proposition}\label{atomic.5}
Let $X$ be an Archimedean pre-Riesz space and $Y$ a vector lattice cover of $X$. Then we have $\mathcal{A}_Y\cap i(X) \sub i(\mathcal{A}_X)$.
\end{proposition}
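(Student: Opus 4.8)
The plan is to observe that this inclusion is precisely the ``$\Leftarrow$'' implication of Theorem~\ref{atomic.4}\ref{atomic.4.eq1}, read off without appealing to pervasiveness. So I would take an arbitrary $\tilde{a}\in\mathcal{A}_Y\cap i(X)$, write $\tilde{a}=i(a)$ for the unique $a\in X$, and aim to show $a\in\mathcal{A}_X$; this immediately gives $\tilde{a}=i(a)\in i(\mathcal{A}_X)$ and hence the desired containment.

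First I would record positivity: since $\tilde{a}$ is an atom in $Y$ we have $i(a)=\tilde{a}\in Y_+\ohne\left\{0\right\}$, and bipositivity of the embedding $i$ forces $a\in X_+\ohne\left\{0\right\}$, so $a$ is a legitimate candidate for an atom in the sense of Definition~\ref{prelim.1a}. To verify the defining property, I would take any $x\in X$ with $0\leq x\leq a$ and apply $i$; positivity of $i$ yields $0\leq i(x)\leq i(a)=\tilde{a}$ in $Y$. Since $\tilde{a}$ is an atom in the vector lattice $Y$, there is $\lambda\in\RR$ with $i(x)=\lambda\tilde{a}=i(\lambda a)$, and injectivity of $i$ then gives $x=\lambda a$. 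From $x\geq 0$ together with $a>0$ one reads off $\lambda\geq 0$, so $a$ satisfies the condition in Definition~\ref{prelim.1a} and is an atom in $X$.

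The point to emphasize is that this argument uses only the bipositivity of $i$ (hence injectivity and order reflection) together with the atom property of $\tilde{a}$ in $Y$; at no stage is the pervasiveness of $X$ invoked. This is exactly why the inclusion persists in the merely Archimedean setting, in contrast to the reverse containment $i(\mathcal{A}_X)\sub\mathcal{A}_Y\cap i(X)$ -- the ``$\Rightarrow$'' direction of Theorem~\ref{atomic.4}\ref{atomic.4.eq1} -- whose proof genuinely relied on the supremum characterization of pervasiveness in Proposition~\ref{properties.1}. I do not expect any real obstacle here; the only care needed is the sign bookkeeping that produces $\lambda\geq 0$, which is immediate.
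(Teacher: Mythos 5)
Your proof is correct and follows exactly the paper's route: Proposition~\ref{atomic.5} is obtained there as an immediate consequence of the ``$\Leftarrow$'' implication in Theorem~\ref{atomic.4}~\ref{atomic.4.eq1}, whose argument (bipositivity and injectivity of $i$ together with the atom property of $i(a)$ in $Y$) never invokes pervasiveness, which is precisely the observation you make. The only cosmetic difference is that you additionally spell out the sign bookkeeping giving $\lambda\geq 0$, which the paper leaves implicit.
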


\begin{remark}
\begin{enumerate}
\itemsep0em
\item
We can reformulate \ref{atomic.4.eq5} of Theorem~\ref{atomic.4} as $\mathcal{A}_Y\sub i(\mathcal{A}_X)$.
\item
If $X$ is not Archimedean, then the equivalence in Theorem~\ref{atomic.4} \ref{atomic.4.eq3} is not true, in general. As already mentioned, without the condition of Archimedeanity even in a vector lattice a discrete element need not be an atom.
\end{enumerate}
\end{remark}

\section{Finite-dimensional pervasive pre-Riesz spaces}\label{fin_dim}
With the help of atoms, we now investigate finite-dimensional Archimedean pervasive pre-Riesz spaces. In Theorem~\ref{atomic.101a} below we establish that all such spaces are, in fact, vector lattices. We start with two technical statements.
\begin{proposition}\label{atomic.100}
Let $X$ be an Archimedean pre-Riesz space and $a_1,a_2\in \mathcal{A}_X$. 
\begin{enumerate}
\itemsep0em
\item\label{atomic.100.it1} If $a_1\perp a_2$, then $a_1$ and $a_2$ are linearly independent.
\item\label{atomic.100.it2} Let $X$ be additionally pervasive. If $a_1$ and $a_2$ are linearly independent, then $a_1\perp a_2$.
\end{enumerate}
\end{proposition}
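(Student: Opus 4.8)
The plan is to transfer both implications to a vector lattice cover $(Y,i)$ of $X$, where the lattice operations are available, and then to pull the conclusions back to $X$ using Proposition~\ref{prelim.36}, which says that disjointness in $X$ is equivalent to disjointness of the images in $Y$. This is the natural route since the ``atom'' and ``disjoint'' hypotheses both have clean counterparts in $Y$.

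For part \ref{atomic.100.it1} I would argue by contradiction. Since atoms are by Definition~\ref{prelim.1a} nonzero elements of $X_+$, linear dependence of $a_1$ and $a_2$ forces $a_2 = c\,a_1$ for some $c>0$. Then Proposition~\ref{prelim.36} turns $a_1\perp a_2$ into $i(a_1)\perp c\,i(a_1)$ in $Y$, that is, $i(a_1)\Inf\bigl(c\,i(a_1)\bigr)=0$. By Proposition~\ref{prelim.17bb} the left-hand side equals $(\min\{1,c\})\,i(a_1)$, which is strictly positive because $i(a_1)>0$ (as $a_1\neq 0$ and $i$ is a bipositive embedding) and $\min\{1,c\}>0$. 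This contradiction shows that $a_1$ and $a_2$ are linearly independent. Observe that this direction uses neither the Archimedean property nor pervasiveness.

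For part \ref{atomic.100.it2} I would first exploit that $X$ is now Archimedean and pervasive: by Theorem~\ref{atomic.4}\ref{atomic.4.eq1} the images $i(a_1)$ and $i(a_2)$ are atoms in the Archimedean vector lattice $Y$, and since $i$ is linear and injective, linear independence of $a_1,a_2$ in $X$ passes to linear independence of $i(a_1),i(a_2)$ in $Y$. The key lattice computation is then standard: from $0\leq i(a_1)\Inf i(a_2)\leq i(a_1)$ and the atom property of $i(a_1)$ we obtain $i(a_1)\Inf i(a_2)=\lambda\,i(a_1)$ for some $\lambda\geq 0$, and symmetrically $i(a_1)\Inf i(a_2)=\mu\,i(a_2)$ for some $\mu\geq 0$. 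Were this infimum nonzero, we would have $\lambda,\mu>0$ and $\lambda\,i(a_1)=\mu\,i(a_2)$, contradicting linear independence; hence $i(a_1)\Inf i(a_2)=0$, i.e.\ $i(a_1)\perp i(a_2)$, and Proposition~\ref{prelim.36} yields $a_1\perp a_2$.

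The only genuinely nontrivial ingredient is the transfer of the atom property to the cover in part \ref{atomic.100.it2}, which is precisely where pervasiveness enters, through Theorem~\ref{atomic.4}\ref{atomic.4.eq1}; everything afterwards is the elementary vector-lattice fact that two atoms are either disjoint or proportional. I expect the bookkeeping of the positive scalars $\lambda,\mu$ (and $\min\{1,c\}$ in part \ref{atomic.100.it1}) to be the only place demanding a little care.
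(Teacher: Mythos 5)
Your proof is correct, and part \ref{atomic.100.it1} is essentially the paper's argument: both reduce linear dependence of two positive atoms to $a_2=c\,a_1$ with $c>0$ and derive the contradiction $0=\min\{1,c\}\,i(a_1)>0$ (the paper pulls the infimum back to $X$ via Proposition~\ref{properties.23y} and computes there, you stay in $Y$; the difference is cosmetic, and the appeal to Proposition~\ref{prelim.17bb} is harmless overkill for a two-element set, where the identity $y\Inf(cy)=\min\{1,c\}y$ is immediate). In part \ref{atomic.100.it2}, however, you take a genuinely different route. The paper argues directly from the definition of pervasiveness: if $a_1\not\perp a_2$, then $i(a_1)\Inf i(a_2)>0$, so pervasiveness produces $a\in X$ with $0<a\leq a_1$ and $0<a\leq a_2$, and applying the atom property of $a_1,a_2$ \emph{in $X$} gives $\lambda_1 a_1=a=\lambda_2 a_2$, i.e.\ linear dependence --- a two-line contrapositive needing nothing beyond Proposition~\ref{properties.1}. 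You instead route pervasiveness through Theorem~\ref{atomic.4}\ref{atomic.4.eq1} to transfer the atom property to the cover, and then invoke the standard vector-lattice fact that two atoms are either proportional or disjoint ($i(a_1)\Inf i(a_2)=\lambda\,i(a_1)=\mu\,i(a_2)$, forcing the infimum to vanish under linear independence, which passes to the images since $i$ is linear and injective). This is legitimate and non-circular, since Theorem~\ref{atomic.4} precedes Proposition~\ref{atomic.100} in the paper and its proof does not rely on it; what your version buys is a clean separation between the transfer step (where pervasiveness lives) and an elementary lattice computation, at the cost of resting on the substantially heavier Theorem~\ref{atomic.4}\ref{atomic.4.eq1}, where the paper's direct use of pervasiveness keeps the proof self-contained.
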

\begin{proof}
\ref{atomic.100.it1}: Let $a_1,a_2\in \mathcal{A}_X$ with $a_1\perp a_2$. Then in a vector lattice cover $(Y,i)$ of $X$ we have by Proposition~\ref{prelim.36} that $i(a_1)\perp i(a_2)$. That is, $i(a_1)\Inf i(a_2)=0$. By Proposition~\ref{properties.23y}~\ref{properties.23y.it2} the infimum $a_1\Inf a_2$ exists in $X$ and equals $0$. this implies that $a_1$ and $a_2$ are linearly independent. Indeed, let on the contrary, $a_1=\lambda a_2$ for some $\lambda\in\RR_{>0}$. Then $0=a_1\Inf a_2 = a_1\Inf (\lambda a_1) = \min\left\{1,\lambda\right\} a_1>0$, a contradiction.

\ref{atomic.100.it2}: Let $a_1,a_2\in \mathcal{A}_X$. We show that if $a_1\not\perp a_2$, then $a_1,a_2$ are linearly dependent.

Let $a_1\not\perp a_2$, i.e.\ $i(a_1)\Inf i(a_2)>0$. Since $X$ is pervasive, there exists an element $a\in X$ with $0< i(a)\leq i(a_1)\Inf i(a_2)$, i.e. $0<a \leq a_1$ and $0<a\leq a_2$. Since $a_1,a_2$ are atoms, there exist $\lambda_1,\lambda_2\in\RR_{>0}$ with $\lambda_1 a_1 =a= \lambda_2 a_2$, i.e.\ $a_1$ and $a_2$ are linearly dependent.
\end{proof}

In Proposition~\ref{atomic.100} the condition of $X$ being pervasive can not be omitted. Indeed, in the Archimedean non-pervasive pre-Riesz space in Example~\ref{atomic.4u} the linearly independent vectors $v_1$ and $v_2$ are atoms, but they are not disjoint.

The idea for the next result originated during a discussion with H. van Imhoff.
\begin{proposition}\label{atomic.101}
Let $X$ be an Archimedean pervasive pre-Riesz space and the atoms $a_1,\ldots,a_n\in \mathcal{A}_X$ pairwise linearly independent. Then the set $\left\{a_1,\ldots, a_n\right\}$ of atoms is linearly independent.
\end{proposition}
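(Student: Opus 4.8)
The plan is to prove linear independence of pairwise linearly independent atoms $a_1, \ldots, a_n$ by combining the pairwise disjointness guaranteed by Proposition~\ref{atomic.100}\ref{atomic.100.it2} with standard facts about disjoint systems in vector lattices, transported through a vector lattice cover. First I would pass to a vector lattice cover $(Y,i)$ of $X$. By Proposition~\ref{atomic.100}\ref{atomic.100.it2}, pairwise linear independence together with pervasiveness yields that the atoms are pairwise disjoint in $X$, and hence by Proposition~\ref{prelim.36} the images $i(a_1), \ldots, i(a_n)$ are pairwise disjoint in $Y$. Since $i$ is injective and linear, it suffices to prove that the pairwise disjoint system $i(a_1), \ldots, i(a_n) \in Y_+ \setminus \{0\}$ is linearly independent in $Y$.

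The key step is therefore the classical fact that a finite system of nonzero, pairwise disjoint positive elements in a vector lattice is linearly independent. I would prove this directly: suppose $\sum_{k=1}^n \mu_k\, i(a_k) = 0$ for some $\mu_k \in \RR$. Fixing an index $j$ and taking the infimum of the absolute value of this expression with $i(a_j)$, I would exploit that $i(a_k) \perp i(a_j)$ for $k \neq j$ to kill all terms except the $j$-th. Concretely, from $\mu_j\, i(a_j) = -\sum_{k \neq j} \mu_k\, i(a_k)$, the left-hand side lies in the band generated by $i(a_j)$, while the right-hand side is disjoint from $i(a_j)$ (as each $i(a_k)$ with $k \neq j$ is disjoint from $i(a_j)$, and disjoint complements are linear subspaces in a vector lattice). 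Hence $\mu_j\, i(a_j)$ is disjoint from itself, forcing $\mu_j\, i(a_j) = 0$, and since $i(a_j) \neq 0$ we get $\mu_j = 0$. As $j$ was arbitrary, all coefficients vanish.

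Transporting back through the embedding completes the argument: from $\sum_k \mu_k\, a_k = 0$ in $X$ we obtain $\sum_k \mu_k\, i(a_k) = 0$ in $Y$ by linearity of $i$, whence all $\mu_k = 0$ by the previous paragraph, so $\{a_1, \ldots, a_n\}$ is linearly independent in $X$.

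I expect the main obstacle to be purely a matter of citing or cleanly justifying the disjointness-implies-independence fact in the vector lattice $Y$; one must be careful that $i(a_k) \perp i(a_j)$ for all $k \neq j$ really does yield that $\sum_{k \neq j} \mu_k\, i(a_k)$ is disjoint from $i(a_j)$, which uses that the disjoint complement $\{i(a_j)\}^{\t{d}}$ is a linear subspace (so a finite combination of elements disjoint from $i(a_j)$ remains disjoint from $i(a_j)$). Everything else is routine: the reduction to $Y$ via Proposition~\ref{atomic.100}\ref{atomic.100.it2} and Proposition~\ref{prelim.36}, and the bijectivity of $i$ onto its image.
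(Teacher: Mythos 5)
Your proof is correct, and it follows the paper's overall strategy: reduce to pairwise disjointness via Proposition~\ref{atomic.100}\ref{atomic.100.it2}, pass to the vector lattice cover via Proposition~\ref{prelim.36}, and then invoke the classical fact that a finite system of nonzero pairwise disjoint positive elements in a vector lattice is linearly independent. The two arguments differ only in how they execute that last step. The paper argues by contradiction: after relabeling it writes $a_1=\sum_{k=2}^n\lambda_k a_k$ and computes $0<i(a_1)=i(a_1)\Inf\sum_{k=2}^n\lambda_k i(a_k)=\sum_{k=2}^n\bigl(i(a_1)\Inf\lambda_k i(a_k)\bigr)=0$ using the distributivity of the infimum over sums of disjoint elements, Lemma~\ref{prelim.0}\ref{prelim.0.it1}. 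You instead test an arbitrary vanishing combination $\sum_k\mu_k i(a_k)=0$ against a fixed index $j$, using that $\{i(a_j)\}^{\t{d}}$ is a linear subspace to conclude $\mu_j i(a_j)\in\{i(a_j)\}^{\t{d}}\cap\{i(a_j)\}^{\t{dd}}=\{0\}$. Your variant has a small technical advantage: Lemma~\ref{prelim.0}\ref{prelim.0.it1} is stated for \emph{positive} $a,b,c$, while the coefficients $\lambda_k$ in the paper's computation may be negative, so the paper's application strictly needs an extra remark (e.g.\ that the identity persists for disjoint, not necessarily positive, summands); your route through the linearity of disjoint complements handles arbitrary real coefficients without any sign bookkeeping, at the cost of spelling out the (equally standard) fact that an element disjoint from itself is zero.
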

\begin{proof}
Assume that the set $\left\{a_1,\ldots,a_n\right\}\sub \mathcal{A}_X$ of pairwise linearly independent atoms is linearly dependent, that is, there exist $\lambda_k\in\RR$ for $k\in\left\{2,\ldots,n\right\}$ such that
\[a_1 =\sum_{k=2}^n \lambda_k a_k.\]
Due to Proposition~\ref{atomic.100} the atoms $a_1,\ldots,a_n$ are pairwise disjoint. Thus in a vector lattice cover $(Y,i)$ of $X$ for $k,j\in\left\{1,\ldots,n\right\}$ with $k\neq j$ we have $i(a_k)\perp \lambda_j i(a_j)$. Therefore we can apply Lemma~\ref{prelim.0}\ref{prelim.0.it1} and obtain the following contradiction:
\[0 \hs<\hs i(a_1) \hs=\hs i(a_1) \Inf \sum_{k=2}^n \lambda_k i(a_k) \hs=\hs \sum_{k=2}^n (i(a_1)\Inf \lambda_k i(a_k)) \hs=\hs 0.\]
\end{proof}

Observe that the conclusion of Proposition~\ref{atomic.101} is not true if the pre-Riesz space is not pervasive, see \c[p.~38]{AliTou}.

To characterize finite-dimensional pervasive pre-Riesz spaces in Theorem~\ref{atomic.101a} below, we first recall some basics. Let $X$ be an ordered vector space. A convex set $B\sub X_+$ is called a \emph{base} of the cone $X_+$ if every $x\in X_+\ohne\left\{0\right\}$ has a unique representation $x=\lambda b$, where $\lambda\in\RR_{>0}$ and $b\in B$.  
The following result can be found in \c[Theorem~1.48]{AliTou}.
\begin{proposition}\label{atomsareextreme}
Let $X$ be an ordered vector space such that the cone $X_+$ has a base $B$. Let $b\in B$. Then $b\in\mathcal{A}_X$ if and only if $b$ is an extreme point of $B$.
\end{proposition}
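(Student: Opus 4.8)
The plan is to reduce the statement to the face characterization of atoms supplied by Proposition~\ref{prelim.1b}, namely that $b\in\mathcal{A}_X$ if and only if the half-ray $R_b:=\left\{\lambda b\mid|\lambda\in\RR_{\geq 0}\right\}$ is a face of $X_+$. It then suffices to prove that $b$ is an extreme point of the base $B$ if and only if $R_b$ is a face of $X_+$. Throughout I use that $0\notin B$ (otherwise $0$ would have to admit a representation $\lambda b$ with $\lambda>0$) and that the representation $x=\lambda c$ with $\lambda\in\RR_{>0}$ and $c\in B$ of a nonzero positive element is unique; in particular an element of $B$ is its own representation, so $x=\lambda c$ with $x\in B$ forces $\lambda=1$ and $c=x$.

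For the implication ``$b\in\mathcal{A}_X\Rightarrow b$ extreme'', I assume $R_b$ is a face and take $b=\alpha x+(1-\alpha)y$ with $x,y\in B$ and $0<\alpha<1$. Since $x,y\in X_+$ and the convex combination equals $b\in R_b$, the face property yields $x,y\in R_b$, so $x=\lambda_x b$ and $y=\lambda_y b$ with $\lambda_x,\lambda_y\geq 0$. As $x\in B$ is nonzero, uniqueness of the base representation forces $\lambda_x=1$ and hence $x=b$; likewise $y=b$. Thus $b$ is an extreme point of $B$.

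The converse is the main work. Assume $b$ is extreme, and let $\alpha x+(1-\alpha)y=\mu b\in R_b$ with $x,y\in X_+$ and $0<\alpha<1$. If $\mu=0$, the sum of the two positive vectors $\alpha x$ and $(1-\alpha)y$ vanishes, forcing $x=y=0\in R_b$; if one of $x,y$ is zero, the other is a positive multiple of $b$, and again $x,y\in R_b$. In the remaining case $x,y\in X_+\ohne\left\{0\right\}$ and $\mu>0$, I write $x=\lambda_x b_x$ and $y=\lambda_y b_y$ with $\lambda_x,\lambda_y>0$ and $b_x,b_y\in B$, and set $s:=\alpha\lambda_x>0$, $t:=(1-\alpha)\lambda_y>0$. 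Then
\[
\mu b = s b_x + t b_y = (s+t)\left(\tfrac{s}{s+t}b_x+\tfrac{t}{s+t}b_y\right),
\]
and the convex combination $c:=\tfrac{s}{s+t}b_x+\tfrac{t}{s+t}b_y$ lies in $B$. Now $\mu b$ and $(s+t)c$ are two representations of the same nonzero positive element, so uniqueness gives $b=c$. Since $\tfrac{s}{s+t},\tfrac{t}{s+t}\in(0,1)$ and $b$ is extreme, this forces $b_x=b_y=b$, whence $x,y\in R_b$. As $R_b$ is clearly a nonempty convex subset of $X_+$, it is a face, and Proposition~\ref{prelim.1b} yields $b\in\mathcal{A}_X$.

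The main obstacle is this converse direction: one must upgrade the purely convex-geometric notion of extremality inside $B$ to the cone-theoretic face property on all of $X_+$. The key maneuver is to normalize arbitrary positive summands into base representations, bundle them into a single convex combination, and exploit uniqueness of the base representation to land exactly on $b$. The degenerate cases $\mu=0$ and $x=0$ or $y=0$ must be treated separately, since they fall outside the scope of the base representation.
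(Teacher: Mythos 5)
Your proof is correct, and there is in fact no in-paper argument to compare it against: the paper imports Proposition~\ref{atomsareextreme} verbatim from \cite[Theorem~1.48]{AliTou} without proof. Your route --- reduce to the face characterization in Proposition~\ref{prelim.1b}, then show that extremality of $b$ in $B$ is equivalent to the half-ray $R_b$ being a face of $X_+$ --- is essentially the standard argument behind the cited theorem. The key maneuver (rescale the nonzero positive summands to base elements, bundle them into a convex combination $c\in B$, use uniqueness of the base representation to force $b=c$, then apply extremality to get $b_x=b_y=b$) is exactly the right one, and your case distinctions in the converse direction ($\mu=0$; one of $x,y$ zero; both nonzero) are complete and handled correctly, as is the use of uniqueness in the forward direction to turn $x=\lambda_x b$ with $x\in B$ into $x=b$.

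One small repair: your parenthetical justification of $0\notin B$ is not right as stated. The definition of a base constrains only the representations of \emph{nonzero} positive elements, so nothing in it forces $0$ to ``admit a representation $\lambda b$ with $\lambda>0$.'' The correct (and equally short) argument combines convexity with uniqueness: if $0\in B$ and $X_+\neq\left\{0\right\}$, pick $x\in X_+\ohne\left\{0\right\}$ with base representation $x=\lambda c$, $\lambda>0$, $c\in B$; then $c\neq 0$, convexity gives $\tfrac{1}{2}c=\tfrac{1}{2}c+\tfrac{1}{2}\cdot 0\in B$, and $x=\lambda c=2\lambda\cdot\bigl(\tfrac{1}{2}c\bigr)$ exhibits two distinct base representations of the nonzero element $x$, contradicting uniqueness. (In the degenerate case $X_+=\left\{0\right\}$ the statement is vacuous, since atoms are nonzero by definition.) With this fix, every use you make of the principle that an element of $B$ is its own base representation is sound, and the proof stands.
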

From \c[Theorems~IV.1.1, VII.1.1]{VulikhWeber} we obtain the following. 
\begin{proposition}\label{conestuff}
Let $X$ be a finite-dimensional ordered Banach space. If the cone $X_+$ is (norm) closed, then $X_+$ has a norm bounded base.
\end{proposition}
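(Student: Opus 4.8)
The plan is to reduce the statement to the construction of a strictly positive continuous linear functional $f$ on $X$, that is, a functional with $f(x)>0$ for every $x\in X_+\ohne\{0\}$. Once such an $f$ is available, I would set $B:=\left\{x\in X_+\mid| f(x)=1\right\}$. This set is convex, and for $x\in X_+\ohne\{0\}$ one has $f(x)>0$, so $x=f(x)\cdot\big(x/f(x)\big)$ exhibits $x$ as a positive multiple of the element $x/f(x)\in B$; uniqueness of the representation $x=\lambda b$ with $\lambda\in\RR_{>0}$, $b\in B$ is forced because applying $f$ gives $\lambda=f(x)$. Hence $B$ is a base of $X_+$. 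For norm boundedness I would extract from the construction a constant $\alpha>0$ with $f(x)\ge\alpha\|x\|$ for all $x\in X_+$; then every $b\in B$ satisfies $\alpha\|b\|\le f(b)=1$, so $\|b\|\le 1/\alpha$ and $B$ is norm bounded.

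To build $f$, I would exploit finite-dimensionality and closedness via a compactness-and-separation argument. Let $S:=\left\{x\in X\mid| \|x\|=1\right\}$ be the unit sphere, which is compact since $\dim X<\infty$, and put $K:=X_+\cap S$. As $X_+$ is norm closed, $K$ is a compact subset of $S$ with $0\notin K$, and its convex hull $\t{conv}(K)$ is again compact (in finite dimensions the convex hull of a compact set is compact, by Carath\'eodory's theorem). The crucial point is that $0\notin\t{conv}(K)$: if $0=\sum_{i=1}^m\lambda_i x_i$ with $x_i\in K\sub X_+$, $\lambda_i>0$ and $\sum_i\lambda_i=1$, then $\lambda_1 x_1=-\sum_{i\ge 2}\lambda_i x_i$ lies in $X_+\cap(-X_+)=\{0\}$ (recall that pointedness is built into the definition of a cone), forcing $x_1=0$, which contradicts $\|x_1\|=1$. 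Since $\t{conv}(K)$ is a compact convex set not containing the origin, strict separation (the finite-dimensional Hahn-Banach theorem) yields a linear functional $f$ and a number $\alpha>0$ with $f(y)\ge\alpha$ for all $y\in\t{conv}(K)$, in particular $f(x)\ge\alpha$ for every $x\in K$.

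It then remains to transfer this estimate from $K$ to all of $X_+$: for $x\in X_+\ohne\{0\}$ the vector $x/\|x\|$ lies in $K$, whence $f(x)=\|x\|\,f\big(x/\|x\|\big)\ge\alpha\|x\|>0$. This simultaneously shows that $f$ is strictly positive and supplies the constant $\alpha$ required in the first paragraph, so both the base property and its boundedness follow.

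The main obstacle is the separation step, and specifically the verification that $0\notin\t{conv}(K)$: this is exactly where the hypotheses that $X_+$ is a (pointed) cone and that it is norm closed are used, and it is what guarantees a genuine gap between the origin and the normalized cone. The remaining ingredients---compactness of $S$ and of $\t{conv}(K)$, and the routine check that $B$ is a base---are straightforward consequences of finite-dimensionality.
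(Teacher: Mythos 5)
Your proof is correct and complete. Note, however, that the paper itself gives no proof of this proposition: it is quoted from the literature (Vulikh's \emph{Geometrie der Kegel in normierten R\"aumen}, Theorems~IV.1.1 and VII.1.1), so your argument supplies a self-contained proof of a step the authors delegate to a citation. Your route is in fact the standard mechanism behind the cited result: the observation that $0\notin\t{conv}(K)$ for $K=X_+\cap S$ --- which is where pointedness ($X_+\cap(-X_+)=\{0\}$) and norm closedness of the cone enter, via compactness of the unit sphere in finite dimensions --- combined with strict separation produces a functional $f$ with $f(x)\geq\alpha\|x\|$ for all $x\in X_+$, and the existence of such a uniformly positive functional is the classical characterization of cones admitting a norm bounded base, realized here as $B=\left\{x\in X_+ \mid f(x)=1\right\}$. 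All the individual steps check out: compactness of $\t{conv}(K)$ via Carath\'eodory, the pointedness argument excluding the origin from the hull, uniqueness of the representation $x=\lambda b$ by applying $f$, and the bound $\|b\|\leq 1/\alpha$. What your version buys over the citation is transparency about exactly which hypotheses do what (closedness and pointedness feed the separation; finite-dimensionality feeds compactness), plus the explicit constant $\alpha$. One pedantic remark: if $X_+=\{0\}$ then $K=\leer$ and the separation step is vacuous, but the statement is trivial in that case, and in the paper's application (Theorem~\ref{atomic.101a}) the space is a pre-Riesz space, so the cone is generating and this degenerate case does not occur.
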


\begin{theorem}\label{atomic.101a}
Let $X$ be an $n$-dimensional Archimedean pre-Riesz space. Then $X$ is pervasive if and only if $X$ is a vector lattice.
\end{theorem}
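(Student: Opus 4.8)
The implication ``vector lattice $\Rightarrow$ pervasive'' is immediate: an Archimedean vector lattice is a vector lattice cover of itself via the identity, and for every $y\in X_+\ohne\{0\}$ the element $d:=y$ satisfies $0<d\leq y$. The substance lies in the converse, and the plan is to exhibit an order isomorphism between $(X,X_+)$ and $(\RR^n,\RR^n_+)$ and then invoke Proposition~\ref{Yudin}.

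First I would pass to a compact base of the cone. Since $X$ is finite-dimensional, directed (every pre-Riesz space is), and Archimedean, the cone $X_+$ is closed: fixing an order unit $u$, any $x\in\overline{X_+}$ satisfies $x+\tfrac1n u\geq 0$, i.e.\ $n(-x)\leq u$, for every $n\in\NN$, so $-x\leq 0$ by the Archimedean property. Fixing a norm makes $X$ a finite-dimensional ordered Banach space, so Proposition~\ref{conestuff} yields a norm bounded base $B$ of $X_+$. Every base is of the form $B=\left\{x\in X_+\mid| f(x)=1\right\}$ for the strictly positive linear (hence continuous) functional $f$ it induces; thus $B$ is closed, and being bounded it is compact. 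By Minkowski's theorem (the finite-dimensional Krein--Milman theorem), $B=\t{conv}(\t{ext}\,B)$, and by Proposition~\ref{atomsareextreme} the extreme points of $B$ are precisely the atoms of $X$ lying in $B$.

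The heart of the argument is counting these extreme points, and this is exactly where pervasiveness enters. Distinct points $b,b'\in B$ are linearly independent: if $b'=\mu b$ then $\mu>0$ and $1=f(b')=\mu f(b)=\mu$, forcing $b'=b$. Hence the extreme points of $B$ form a family of pairwise linearly independent atoms, so Proposition~\ref{atomic.101} --- whose proof rests on the disjointness of linearly independent atoms from Proposition~\ref{atomic.100}\ref{atomic.100.it2}, and thereby on pervasiveness --- shows that this family is linearly independent, hence finite with at most $n$ members. Conversely, $B=\t{conv}(\t{ext}\,B)$ gives $\t{pos}(\t{ext}\,B)=\t{pos}\,B=X_+\ohne\{0\}$, and since $X=X_+-X_+$ the extreme points span $X$. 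A linearly independent spanning family has exactly $n$ elements $b_1,\dots,b_n$, which therefore form a basis of $X$ with $X_+=\left\{\sum_{k=1}^n\lambda_k b_k\mid|\lambda_k\geq 0\right\}$.

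Finally, the linear isomorphism $T\colon\RR^n\to X$ with $T(e_k)=b_k$ carries $\RR^n_+$ onto $X_+$, so it is an order isomorphism; hence statement~\ref{Yudin.it2} of Proposition~\ref{Yudin} holds and $X$ is a vector lattice. The main obstacle is precisely the cardinality bound on the atoms: without pervasiveness the cone may possess more than $n$ extreme rays, as the four-ray cone of Example~\ref{atomic.4u} shows, and $X$ then fails to be a lattice.
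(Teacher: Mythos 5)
Your proof is correct and takes essentially the same route as the paper: pass to a compact base of the closed cone, use Minkowski's theorem with Proposition~\ref{atomsareextreme} to identify the extreme points of the base with atoms, invoke Proposition~\ref{atomic.101} (the sole point where pervasiveness enters) for linear independence, and conclude via an order isomorphism with $(\RR^n,\RR^n_+)$ and Proposition~\ref{Yudin}. Your only deviations are cosmetic improvements: you justify closedness of the cone and compactness of the base in more detail than the paper, and you replace its three-case analysis of the cardinality of $\mathcal{A}$ by the single observation that a linearly independent spanning family must have exactly $n$ members.
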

\begin{proof}
Clearly, if $X$ is a vector lattice, then it is pervasive in its Riesz completion $X^\rho=X$.
Conversely, let $X$ be an $n$-dimensional Archimedean pervasive pre-Riesz space. Endowed with the Euclidean norm, $X$ is an ordered Banach space.
Since $X$ is Archimedean, the cone $X_+$ is closed. By Proposition~\ref{conestuff} the cone $X_+$ has a norm bounded base $B$. Let $\mathcal{A}:=\mathcal{A}_X\cap B$.
We first establish $X_+=\textnormal{pos}\mathcal{A}$. The base $B$ is a convex set and, as $X$ is finite-dimensional, also compact. By Proposition~\ref{atomsareextreme} a vector $a\in B$ is an extreme point of $B$ if and only if $a$ is an atom. By Minkowski's theorem\footnote{Minkowski's theorem states that in a finite-dimensional normed vector space every element of a compact convex set $S\sub X$ is a convex combination of extreme points of $S$, see e.g.\ \c[p.~1]{Phelps}.} every $b\in B$ is a convex combination of atoms in $\mathcal{A}$. From $X_+=\bigcup_{\lambda\in\RR_{\geq 0}}\lambda B$, it follows $X_+=\textnormal{pos}\mathcal{A}$.
Observe that $\mathcal{A}$ contains pairwise linearly independent atoms. Indeed, if $a,b\in\mathcal{A}$ with $a\neq b$ such that $a=\lambda b$, then $a$ has two representations with respect to the basis, which is a contradiction.

Next we consider three cases for the cardinality of $\mathcal{A}$. If $|\mathcal{A}|<n$, then $X_+$ is polyhedral. As $X_+=\textnormal{pos}\mathcal{A}$, the cone is not generating and the space is not even pre-Riesz, a contradiction. Let $|\mathcal{A}|>n$ and pick $m>n$ atoms $a_1,\ldots,a_m\in\mathcal{A}$. Then due to $a_1,\ldots, a_m$ being pairwise linearly independent, by Proposition~\ref{atomic.101} the set of atoms $\left\{a_1,\ldots, a_m\right\}$ is linearly independent. But the number $m$ of linearly independent elements is greater than the dimension $n$ of the space, a contradiction. Finally, let $|\mathcal{A}|=n$, i.e.\ $X_+$ has $n$ extreme rays. Then $(X,X_+)$ is order isomorphic to $(\RR^n,\RR^n_+)$. By Proposition~\ref{Yudin} it follows that $X$ is a vector lattice.
\end{proof}

The combination of Proposition~\ref{Yudin} and Theorem~\ref{atomic.101a} leads to the following.
\begin{corollary}
Let $(X,K)$ be an $n$-dimensional Archimedean pre-Riesz space. Each of the properties \ref{Yudin.it1} to \ref{Yudin.it3} of Proposition~\ref{Yudin} is equivalent to
\begin{enumerate}
\itemsep0em
\item[\textit{(iv)}] $(X,K)$ is pervasive.
\end{enumerate}
\end{corollary}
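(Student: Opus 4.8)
The plan is to obtain the statement as a purely formal consequence of the two principal results already at hand, namely Proposition~\ref{Yudin} and Theorem~\ref{atomic.101a}; essentially no new argument is needed. First I would confirm that Proposition~\ref{Yudin} applies to $(X,K)$. That proposition is stated for an $n$-dimensional Archimedean \emph{directed} ordered vector space, whereas the corollary only assumes $(X,K)$ to be an $n$-dimensional Archimedean pre-Riesz space. As recalled in the preliminaries, every pre-Riesz space is directed, so the hypothesis is automatically met and Proposition~\ref{Yudin} yields the mutual equivalence of \ref{Yudin.it1}, \ref{Yudin.it2} and \ref{Yudin.it3}.

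Next I would invoke Theorem~\ref{atomic.101a}, which states that an $n$-dimensional Archimedean pre-Riesz space is pervasive precisely when it is a vector lattice; this is exactly the equivalence of the new property \textit{(iv)} with property \ref{Yudin.it1}. Appending this single equivalence to the three already established closes the loop and shows that all four conditions are pairwise equivalent, which is the assertion of the corollary.

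I do not expect any genuine obstacle here, since all the substantive work has been carried out in the proof of Theorem~\ref{atomic.101a} (via the base characterization of atoms in Proposition~\ref{atomsareextreme}, the linear-independence result Proposition~\ref{atomic.101}, and Minkowski's theorem). The only point worth a moment's care is the routine verification that the directedness hypothesis of Proposition~\ref{Yudin} is automatic for pre-Riesz spaces, which I noted above; beyond that, the corollary is a direct concatenation of the two cited results.
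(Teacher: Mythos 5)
Your proposal is correct and matches the paper exactly: the paper derives this corollary with no further argument, stating only that it follows from the combination of Proposition~\ref{Yudin} and Theorem~\ref{atomic.101a}. Your added remark that pre-Riesz spaces are directed (so Proposition~\ref{Yudin} applies) is the right routine check and is implicit in the paper's preliminaries.
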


\section{Atoms and principal bands in pre-Riesz spaces}\label{principal_bands}

The following properties of atoms were partially shown for vector lattices in \c[Proposition~III.13]{Vulikh_en}, see also Proposition~\ref{prelim.19.6}. We generalize them for pervasive pre-Riesz spaces in the following technical result.
\begin{lemma}\label{atomic.7}
Let $X$ be an Archimedean pervasive pre-Riesz space. Let $a\in \mathcal{A}_X$ and $x\in X_+$. Then we have the following.
\begin{enumerate}
\itemsep0em
\item\label{atomic.7.eq1} There exists $\lambda\in\RR_{\geq 0}$ such that $x-\lambda a\geq 0$ and $(x-\lambda a)\perp a$.
\item\label{atomic.7.eq2} The real number $\lambda\in\RR$ in \ref{atomic.7.eq1} is unique. Moreover,
\begin{enumerate}
	\itemsep0em
	\item\label{atomic.7.eq3} $\lambda a \leq x$ and for every $\eps>0$ we have $(\lambda +\eps)a \not\leq x$, i.e.\ $\lambda$ is the greatest number $\mu\in\RR$ which satisfies the inequality $\mu a \leq x$,
	\item\label{atomic.7.eq4} $\lambda =0$ holds if and only if $x\perp a$.
\end{enumerate}
\end{enumerate}
\end{lemma}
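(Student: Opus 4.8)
The plan is to transfer the whole problem to a vector lattice cover $(Y,i)$ of $X$, where infima are available and where $i(a)$ is again an atom, and to define the desired scalar as a supremum. Since $a\in\mathcal{A}_X$ and $X$ is Archimedean and pervasive, Theorem~\ref{atomic.4}\ref{atomic.4.eq1} yields $i(a)\in\mathcal{A}_Y$, and $i(a)>0$; moreover $Y$ is Archimedean. Bipositivity of $i$ gives
\[\Lambda:=\left\{\mu\in\RR_{\geq 0}\mid| \mu a\leq x\right\}=\left\{\mu\in\RR_{\geq 0}\mid| \mu\, i(a)\leq i(x)\right\}.\]
This set contains $0$, and it is bounded above: otherwise $n\,i(a)\leq i(x)$ for all $n\in\NN$ would force $i(a)\leq 0$ by Archimedeanity, contradicting $i(a)>0$. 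Hence $\lambda:=\sup\Lambda\in\RR_{\geq 0}$ exists.

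First I would show that the supremum is attained, i.e.\ $\lambda a\leq x$. Applying Proposition~\ref{prelim.17bb} in $Y$ gives $\sup\left\{\mu\, i(a)\mid|\mu\in\Lambda\right\}=\lambda\, i(a)$; since $i(x)$ is an upper bound of $\left\{\mu\, i(a)\mid|\mu\in\Lambda\right\}$, it follows that $\lambda\, i(a)\leq i(x)$, and therefore $\lambda a\leq x$ by bipositivity. Thus $\lambda=\max\Lambda$, which already proves \ref{atomic.7.eq3}: we have $\lambda a\leq x$, and $(\lambda+\eps)a\not\leq x$ for every $\eps>0$.

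The crucial step is the disjointness $(x-\lambda a)\perp a$. By Proposition~\ref{prelim.36} it suffices to establish $\big(i(x)-\lambda\, i(a)\big)\perp i(a)$ in $Y$, and since both elements are positive this amounts to $\big(i(x)-\lambda\, i(a)\big)\Inf i(a)=0$. Here I would argue by contradiction using the atomic structure of $Y$: the element $\big(i(x)-\lambda\, i(a)\big)\Inf i(a)$ satisfies $0\leq\big(i(x)-\lambda\, i(a)\big)\Inf i(a)\leq i(a)$, so since $i(a)\in\mathcal{A}_Y$ it equals $\nu\, i(a)$ for some $\nu\in[0,1]$. If $\nu>0$, then $i(x)-\lambda\, i(a)\geq\nu\, i(a)$ forces $(\lambda+\nu)\,i(a)\leq i(x)$, that is, $\lambda+\nu\in\Lambda$ with $\lambda+\nu>\sup\Lambda$, a contradiction; hence $\nu=0$. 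Together with $x-\lambda a\geq 0$ this proves \ref{atomic.7.eq1}. I expect this to be the main obstacle, since it is the only point where the atomic structure of $Y$ (and thus, through Theorem~\ref{atomic.4}, the pervasiveness of $X$) is genuinely exploited.

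For the uniqueness in \ref{atomic.7.eq2}, suppose $\lambda_1,\lambda_2\in\RR_{\geq 0}$ both satisfy the conditions of \ref{atomic.7.eq1}. Setting $w_j:=i(x)-\lambda_j\, i(a)$, each $w_j$ is disjoint from $i(a)$; since disjoint complements are linear subspaces, the difference $w_1-w_2=(\lambda_2-\lambda_1)\, i(a)$ is again disjoint from $i(a)$. But a nonzero multiple $c\,i(a)$ cannot be disjoint from $i(a)$, because $c\,i(a)\perp i(a)$ gives $\min\{|c|,1\}\, i(a)=0$, contradicting $i(a)>0$; hence $\lambda_1=\lambda_2$. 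Finally, \ref{atomic.7.eq4} is immediate: if $\lambda=0$, then $x=x-\lambda a\perp a$ by \ref{atomic.7.eq1}; conversely, if $x\perp a$, then $\lambda=0$ meets both requirements of \ref{atomic.7.eq1}, so by uniqueness $\lambda=0$.
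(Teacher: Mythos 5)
Your proposal is correct, and it takes a genuinely different route from the paper's proof. The paper argues by a four-case analysis on the relative position of $a$ and $x$: in the main case $0<a\leq x$ it invokes Proposition~\ref{prelim.19.6} (Vulikh's result for atoms in Archimedean vector lattices) as a black box to produce $\lambda$, it handles $0\leq x<a$ directly from the atom property, reduces the case where some multiple $\gamma a$ sits below $x$ to Case~1, and in the remaining case uses pervasiveness a second time: if $x\not\perp a$, pervasiveness yields $\tilde a\in X$ with $0<i(\tilde a)\leq i(x)\Inf i(a)$, which the atom property converts into $0<\gamma a\leq x$, a contradiction. You instead give a single uniform argument with no case splitting: you define $\lambda=\sup\left\{\mu\in\RR_{\geq 0}: \mu a\leq x\right\}$ (bounded above by Archimedeanity), obtain attainment $\lambda a\leq x$ from Proposition~\ref{prelim.17bb}, and derive the disjointness by observing that $\bigl(i(x)-\lambda\, i(a)\bigr)\Inf i(a)$ lies in $[0,i(a)]$ and hence equals $\nu\, i(a)$ because $i(a)\in\mathcal{A}_Y$, where $\nu>0$ would force $\lambda+\nu$ into the defining set of the supremum. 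In effect you have inlined a proof of Proposition~\ref{prelim.19.6} in a slightly stronger form (allowing $\lambda=0$ and not requiring $a\leq x$), which is precisely why the paper's case distinctions evaporate; your uniqueness argument via linearity of the disjoint complement $\left\{i(a)\right\}^{\t{d}}$ is likewise cleaner than the paper's repeated infimum computations. A structural difference worth noting: in your proof pervasiveness enters only through Theorem~\ref{atomic.4}\ref{atomic.4.eq1}, to guarantee $i(a)\in\mathcal{A}_Y$, so your argument in fact proves the lemma for any Archimedean pre-Riesz space whose cover contains $i(a)$ as an atom, whereas the paper also consumes pervasiveness in its Case~4. What the paper's route buys is an explicit description of each geometric configuration (useful, e.g., for seeing when $\lambda>0$); what yours buys is brevity, uniformity, and a sharper localization of where the pervasiveness hypothesis is genuinely used.
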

\begin{proof}
Let $(Y,i)$ be a vector lattice cover of $X$.
We consider four different cases and first show \ref{atomic.7.eq1} and \ref{atomic.7.eq2}\ref{atomic.7.eq3} in each of them.

\underline{Case 1}: Let first $0<a\leq x$, i.e.\ $0<i(a) \leq i(x)$.

\ref{atomic.7.eq1}: Since $a$ is an atom, by Theorem~\ref{atomic.4}~\ref{atomic.4.eq1} it follows that $i(a)$ is an atom in $Y$. By Proposition~\ref{prelim.19.6} there exists $\lambda\in\RR_{>0}$ such that $i(x)-\lambda\hs i(a) \geq 0$ and $(i(x)-\lambda\hs i(a))\perp i(a)$. It follows $x-\lambda a\geq 0$, and by Proposition~\ref{prelim.36} we have $(x-\lambda a)\perp a$ in $X$, which establishes \ref{atomic.7.eq1}.

\ref{atomic.7.eq2}: 
We prove the statement \ref{atomic.7.eq3}. Assume that there are two numbers $\lambda, \mu\in\RR_{\geq 0}$  with $\mu\neq \lambda$ such that $x-\lambda a\geq 0$, $(x-\lambda a)\perp a$ and $x-\mu a\geq 0$, $(x-\mu a)\perp a$. Let without loss of generality $\mu >\lambda$. Since $(x-\lambda a)\perp a$, it follows
\[[x -\mu a +(\mu -\lambda)a] \perp a,\]
or, equivalently, $0=[(i(x) -\mu i(a)) +(\mu -\lambda)i(a)] \Inf i(a)$. Due to $(x -\mu a)\perp(\mu -\lambda)a$ we have $(i(x) -\mu i(a))\perp(\mu -\lambda)i(a)$, from which (using Lemma~\ref{prelim.0}\ref{prelim.0.it1} in the second step) it follows
\begin{align*}
0&=[(i(x) -\mu i(a)) +(\mu -\lambda)i(a)] \Inf i(a)\\
&= [(i(x) -\mu i(a))\Inf i(a)] + [\left((\mu -\lambda)i(a)\right) \Inf i(a)] \\
&= [\left((\mu -\lambda)i(a)\right) \Inf i(a)] \neq 0,
\end{align*}
a contradiction. Thus the number $\lambda\in\RR$ with the properties as in \ref{atomic.7.eq1} is uniquely defined. 

Moreover, for every $\eps>0$ we have $(\lambda +\eps)a \not\leq x$. Indeed, let $\mu:=\lambda+\eps$ and assume $\mu a \leq x$. Then due to $0 \leq x-\mu a < x-\lambda a$ and $(x-\lambda a)\perp a$ it follows $(x-\mu a)\perp a$, i.e.\ $\mu\in\RR_{>0}$ with $\mu\neq \lambda$ is another real number which satisfies the conditions in \ref{atomic.7.eq1}, a contradiction to the uniqueness of $\lambda$. 

\underline{Case 2}: Let $0\leq x <a$.

\ref{atomic.7.eq1}: Due to $a$ being an atom there is $\lambda\in\RR_{\geq 0}$ with $x = \lambda a$. Thus $x-\lambda a = 0$ and therefore $(x-\lambda a)\Inf a =0$.

\ref{atomic.7.eq2}: Let $\eps>0$. Then we have $(\lambda+\eps)a >x$, that is $(\lambda+\eps)a \not\leq x$, i.e.\ there does not exist a real number greater than $\lambda$ satisfying \ref{atomic.7.eq1}. Moreover, if $0<\mu <\lambda$ for some $\mu\in\RR$, then we have $0<\mu a \leq x$. But since $x-\mu a = (\lambda -\mu)a>0$, it follows $(x-\mu a)\not\perp a$, i.e.\ $\lambda$ is the only real number which satisfies the properties in \ref{atomic.7.eq1}.

\underline{Case 3}: Let $a$ and $x$ be not comparable and let there exist $\gamma >0$ such that $0<\gamma a\leq x$. Then we can apply Case 1 to show \ref{atomic.7.eq1} and \ref{atomic.7.eq2}.

\underline{Case 4}: Finally, consider the case where $a$ and $x$ are not comparable, but there does not exist $\gamma >0$ such that $0<\gamma a\leq x$.

\ref{atomic.7.eq1}: Let $\lambda=0$. We show $(x-\lambda a) \perp a$, i.e.\ $x\perp a$, by contradiction. Assume that $x\not\perp a$. Then it follows $i(x)\Inf i(a)\neq 0$. Due to $X$ being pervasive there exists $\tilde{a}\in X$ such that $0<i(\tilde{a}) \leq i(x)\Inf i(a) \leq i(a)$. Since $a$ is an atom, by Theorem~\ref{atomic.4}~\ref{atomic.4.eq1} the element $i(a)$ is also an atom and therefore there exists $\gamma >0$ with $0< i(\tilde{a})= \gamma\hs i(a)$, i.e.\ $0<\tilde{a} = \gamma a$. Moreover, we have $0<i(\tilde{a}) \leq i(x)$, i.e. $0<\gamma a \leq x$, a contradiction.

\ref{atomic.7.eq2}: We have $x-\lambda a = x \geq 0$. Moreover, for $\gamma>0$ by assumption we obtain $0<(\lambda +\gamma) a = \gamma a\not\leq x$. This leads to the uniqueness of $\lambda$.

\ref{atomic.7.eq4} Notice that $\lambda =0$ holds only in Case 4. Due to the uniqueness of the number $\lambda$ in each of the four cases, Case 4 is the only case in which we have $x\perp a$. That is, $\lambda =0$ holds if and only if $x\perp a$.
\end{proof}

By Lemma~\ref{atomic.7} we can decompose every $x\in X$ 
as $x=\lambda a +(x-\lambda a)$, where the two summands are disjoint. In particular, one summand belongs to the band $\B_a$ generated by $a$. The next example depicts such a situation in a specific pre-Riesz space. Similar to the vector lattice case, we call an ordered vector space $X$ \emph{atomic}, if for every $x\in X$ with $x>0$ there is an atom $a\in\mathcal{A}_X$ such that $0<a\leq x$.
\begin{example}\label{atomic.6}
\textit{An Archimedean atomic pervasive pre-Riesz space $X$ which is not a vector lattice.}

For the space $C^1[0,1]$ of continuously differentiable functions on $[0,1]$ and characteristic functions $\1_{\{x\}}$ of some singleton $\{x\}\sub [0,1]$ we consider the space
\[X:=\Span\big(C^1[0,1]\cup\left\{\1_{\{x\}}\mid|x\in[0,1]\right\}\big)\]
with pointwise order. Clearly, $X$ is atomic with
$\mathcal{A}_X=\left\{\lambda\1_{\{x\}}\mid|x\in[0,1],\lambda\in\RR_{> 0}\right\}$.
Moreover, $X$ is Archimedean and directed, therefore $X$ is pre-Riesz. However, $X$ is not a vector lattice, as the two differentiable linear functions $x_1(t):=t$ and $x_2(t)=1-t$ have no infimum in $X$. In analogy to $C[0,1]$ being a vector lattice cover of $C^1[0,1]$, a vector lattice cover of $X$ is given by
\[Y:=\Span\big(C[0,1]\cup\left\{\1_{\{x\}}\mid|x\in[0,1]\right\}\big).\]
It is easy to see that $X$ is pervasive, since for every $y\in Y_+$, $y\neq 0$, we can find an atom $a\in X$ with $0<i(a)\leq y$. 

For every $a\in\mathcal{A}_X$ we have for the principal ideal $\I_a$ generated by $a$ and for the principal band $\B_a$ generated by $a$ the equality $\I_a=\B_a$ in $X$. Moreover, the band $\B_a$ is one-dimensional. 
\end{example}

\begin{theorem}\label{atomic.8}
Let $X$ be an Archimedean pervasive pre-Riesz space and $a\in \mathcal{A}_X$. Then the principal ideal and the principal band generated by $a$ coincide, i.e.\ $\I_a = \B_a$.
\end{theorem}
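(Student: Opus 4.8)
The plan is to pass to a vector lattice cover $(Y,i)$ of $X$ and to show that both $\I_a$ and $\B_a$ coincide with the one-dimensional subspace $\RR a$. The key observation is that by Theorem~\ref{atomic.4}\ref{atomic.4.eq1} the image $i(a)$ is an atom of the Archimedean vector lattice $Y$, so that in $Y$ the principal ideal and the principal band generated by $i(a)$ are both already known to equal $\RR\hs i(a)$: the ideal by Proposition~\ref{prelim.19.2}, and the band by Theorem~\ref{LuxZahn}. The proof then amounts to transporting these two facts back to $X$ along $i$.

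For the ideal, Theorem~\ref{atomic.4}\ref{atomic.4.eq4} gives $i(\I_a)=\I_{i(a)}$, and since $i(a)\in\mathcal{A}_Y$, Proposition~\ref{prelim.19.2} yields $\I_{i(a)}=\RR\hs i(a)$. Hence every $x\in\I_a$ satisfies $i(x)=\lambda\hs i(a)=i(\lambda a)$ for some $\lambda\in\RR$, and injectivity of $i$ forces $x=\lambda a$; conversely $\RR a\sub\I_a$ because $\I_a$ is a subspace containing $a$. Thus $\I_a=\RR a$. Since $\B_a$ is likewise a subspace containing $a$, we also have $\RR a=\I_a\sub\B_a$, and so the whole theorem reduces to the single inclusion $\B_a\sub\RR a$.

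For this last inclusion I would use $\B_a=\left\{a\right\}\dd$ together with pervasiveness. By Theorem~\ref{LuxZahn} the band $\left\{i(a)\right\}\dd=\RR\hs i(a)$ is a projection band in $Y$, so any $x\in\B_a$ can be written as $i(x)=\mu\hs i(a)+w$ with $\mu\in\RR$ and $w\perp i(a)$. Suppose $w\neq0$. Applying pervasiveness to $|w|>0$ produces $d\in X$ with $0<i(d)\leq|w|$; then $i(d)\perp i(a)$, hence $d\perp a$ by Proposition~\ref{prelim.36}, i.e.\ $d\in\left\{a\right\}^{\t{d}}$. As $x\in\left\{a\right\}\dd$ this forces $x\perp d$, that is $|i(x)|\Inf i(d)=0$. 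On the other hand, $\mu\hs i(a)\perp w$ gives $|i(x)|=|\mu|\hs i(a)+|w|$, and Lemma~\ref{prelim.0}\ref{prelim.0.it1} computes
\[|i(x)|\Inf i(d)=\big(|\mu|\hs i(a)\big)\Inf i(d)+|w|\Inf i(d)=0+i(d)=i(d)>0,\]
a contradiction. Hence $w=0$ and $i(x)=i(\mu a)$, so $x=\mu a\in\RR a$, which completes the proof.

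The main obstacle is precisely this final step: one must manufacture, via pervasiveness, an element $d\in X$ that is disjoint from $a$ yet detects the ``off-band'' part $w$ of $i(x)$, and then extract the contradiction from the lattice arithmetic in $Y$ (splitting the infimum through $|i(x)|=|\mu|\hs i(a)+|w|$ and using $0<i(d)\leq|w|$ with $i(d)\perp i(a)$). By contrast, the reduction $\I_a=\RR a$ and the trivial inclusion $\RR a\sub\B_a$ are routine consequences of the transfer results for atoms already established in Theorem~\ref{atomic.4}.
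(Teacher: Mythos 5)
Your proof is correct, but it takes a genuinely different route from the paper's. The paper argues intrinsically in $X$: assuming $y\in\B_a\ohne\I_a$, it first manufactures a \emph{positive} element $x\in\B_a$ with $x\notin\RR a$ (via the pervasiveness formula $|i(y)|=\sup\left\{z\in i(X)\mid 0<z\leq |i(y)|\right\}$ together with Proposition~\ref{prelim.17bb}), and then applies Lemma~\ref{atomic.7} -- the paper's pre-Riesz generalization of Proposition~\ref{prelim.19.6} -- to split $x=\lambda a+(x-\lambda a)$ with $(x-\lambda a)\perp a$ inside $X$; the contradiction comes from $x-\lambda a\in\left\{a\right\}\dd\cap\left\{a\right\}^{\t{d}}=\left\{0\right\}$. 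You instead perform the splitting in the cover: you decompose $i(x)=\mu\hs i(a)+w$ using the classical fact (Theorem~\ref{LuxZahn}) that $\B_{i(a)}=\RR\hs i(a)$ is a projection band in the Archimedean vector lattice $Y$, and you invoke pervasiveness only once, to produce $d\in X$ with $0<i(d)\leq|w|$ that is disjoint from $a$ yet not disjoint from $x$, contradicting $x\in\left\{a\right\}\dd$. This is exactly the right use of pervasiveness, since $w$ itself need not lie in $i(X)$, so the paper's trick of placing the off-band part in $\left\{a\right\}\dd\cap\left\{a\right\}^{\t{d}}$ is unavailable to you and the detecting element $d$ is the correct substitute. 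What each approach buys: the paper's route reuses Lemma~\ref{atomic.7}, which it needs anyway (including the uniqueness of $\lambda$) for the projection-band decomposition in Theorem~\ref{atomic.9}, whereas your route bypasses both Lemma~\ref{atomic.7} and the positive-element extraction, handling arbitrary $x\in\B_a$ in one stroke at the cost of leaning harder on the transfer results of Theorem~\ref{atomic.4} and on classical vector lattice theory. Two small points worth making explicit: $Y$ is Archimedean because $X$ is Archimedean and directed (needed for Theorem~\ref{LuxZahn} and Proposition~\ref{prelim.19.2} in $Y$), and the identity $|i(x)|=|\mu|\hs i(a)+|w|$ for disjoint summands is the computation appearing in the proof of Lemma~\ref{prelim.0}\ref{prelim.0.it2}; neither affects correctness.
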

\begin{proof}
Let $a\in \mathcal{A}_X$. It follows that the ideal $\I_a$ is one-dimensional, i.e.\
$\I_a=\left\{\lambda a\mid| \lambda\in \RR\right\}$.
Consider the band $\B_a=\left\{a\right\}\dd$.
Assume that there exists an element $y\in \B_a\ohne \I_a$. That is, $y\in\B_a$ and for every $\lambda \in\RR$ we have $y\neq \lambda a$. We intend to apply Lemma~\ref{atomic.7}, for which we need an appropriate positive element.
Since $X$ is pervasive, by Proposition~\ref{properties.1} it follows
\[ |i(y)|=\sup\left\{z\in i(X)\mid| 0< z\leq |i(y)|\right\}.\]
Let $S:=\left\{z\in i(X)\mid| 0< z\leq |i(y)|\right\}$. First we show by contradiction that there exists a positive element 
$v\in S$ such that for $x:=i^{-1}(v)$ and for every $\lambda\in \RR$ we have $x\neq \lambda a$. Assume, on the contrary, that for every $z\in S$ there is $\lambda_z \in\RR_{\geq 0}$ such that $z=\lambda_z i(a)$. Since $Y$ is Archimedean, by Proposition~\ref{prelim.17bb} for $\lambda:=\sup\left\{\lambda_z\mid|z\in S\right\}\in\RR$ we have $|i(y)|=\lambda i(a)$.
This contradicts $y\notin\I_a$. Thus there exists $x\in X$ such that $i(x)\in S$, and for every $\lambda\in \RR$ we have $x\neq \lambda a$. Moreover, due to $0<i(x)\leq |i(y)|$ we have $\left\{x,-x\right\}^u \supseteq \left\{y,-y\right\}^u$. Due to $\B_a$ being an ideal, it follows $x\in \B_a$. We conclude that there is a positive element $x\in X$, $x\neq 0$, with $x\in \B_a$, such that for every $\lambda\in\RR$ we have $x\neq \lambda a$. 

Second, by Lemma~\ref{atomic.7} there exists $\lambda\in\RR_{\geq 0}$ such that $(x-\lambda a)\perp a$. Due to $x,\lambda a\in \B_a$ it follows $w:= x-\lambda a\in \B_a$. Thus $w\in \B_a=\left\{a\right\}\dd$, and, due to $w\perp a$, we get $w\in\left\{a\right\}^{\t{d}}$. We have $\left\{a\right\}\dd \cap \left\{a\right\}^{\t{d}}=\left\{0\right\}$, which implies $w=0$. This leads to $x=\lambda a$, a contradiction.

We established for every $y\in\B_a$ the relation $y= \lambda a$ for some $\lambda\in\RR$. That is, $\B_a=\I_a$.
\end{proof}

\begin{theorem}\label{atomic.9}
Let $X$ be an Archimedean pervasive pre-Riesz space and let $a\in \mathcal{A}_X$. Then we have $X= \B_a\oplus \B_a^\t{d}$.
\end{theorem}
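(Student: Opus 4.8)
The plan is to read the decomposition off the two preparatory results, so that essentially no new analytic work is needed. First I would fix a vector lattice cover $(Y,i)$ and record what I need. By Theorem~\ref{atomic.8} the band $\B_a$ coincides with the principal ideal $\I_a$ and is the single ray $\left\{\lambda a\mid|\lambda\in\RR\right\}$. Since disjointness is preserved under scaling, $w\perp a$ already forces $w\perp\lambda a$ for every $\lambda\in\RR$, hence $w\perp\B_a$; therefore for $w\in X$ we have $w\in\B_a^{\t{d}}$ if and only if $w\perp a$. This translation reduces the claim to proving $X=\B_a+\B_a^{\t{d}}$ together with $\B_a\cap\B_a^{\t{d}}=\left\{0\right\}$, which together say exactly that the sum $\B_a\oplus\B_a^{\t{d}}$ is direct and exhausts $X$.

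For the existence of the decomposition I would take an arbitrary $x\in X_+$ and apply Lemma~\ref{atomic.7}\ref{atomic.7.eq1}: there is $\lambda\geq 0$ with $x-\lambda a\geq 0$ and $(x-\lambda a)\perp a$. Writing $x=\lambda a+(x-\lambda a)$, the first summand lies in $\B_a$ and, by the translation above, the second lies in $\B_a^{\t{d}}$, so $x\in\B_a+\B_a^{\t{d}}$. To reach a general $x\in X$ I would use that every pre-Riesz space is directed, write $x=x_1-x_2$ with $x_1,x_2\in X_+$, and decompose $x_1=\alpha a+u_1$ and $x_2=\beta a+u_2$ with $u_1,u_2\in\B_a^{\t{d}}$. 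Then $x=(\alpha-\beta)a+(u_1-u_2)$, and since $\B_a^{\t{d}}$ is a linear subspace (disjoint complements are bands) we have $u_1-u_2\in\B_a^{\t{d}}$, whence $x\in\B_a+\B_a^{\t{d}}$.

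For directness, if $z\in\B_a\cap\B_a^{\t{d}}$ then $z\perp z$, and unwinding the defining relation~\eqref{disjoint} forces $z=0$ (it gives $\left\{2z,-2z\right\}^u=\left\{0\right\}^u$, so $0$ is an upper bound of both $2z$ and $-2z$, i.e.\ $2z\leq 0\leq 2z$). Thus $\B_a\cap\B_a^{\t{d}}=\left\{0\right\}$, the sum is direct, and $X=\B_a\oplus\B_a^{\t{d}}$, i.e.\ $\B_a$ is a projection band. I do not expect a genuine obstacle at this stage: the real content has already been packaged into Lemma~\ref{atomic.7}, which produces the splitting coefficient $\lambda$ and the disjointness $(x-\lambda a)\perp a$, and into Theorem~\ref{atomic.8}, which collapses $\B_a$ onto a single ray and thereby makes the identification $\B_a^{\t{d}}=\left\{a\right\}^{\t{d}}$ available. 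The only points needing a little care are this identification and the passage from positive to arbitrary elements, where the linearity of $\B_a^{\t{d}}$ is what keeps the second component inside the disjoint complement.
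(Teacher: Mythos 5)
Your proof is correct, and while the existence half follows the paper, your treatment of uniqueness is genuinely leaner. On existence both arguments run on the same engine: feed a positive element into Lemma~\ref{atomic.7}\ref{atomic.7.eq1} to obtain the splitting $x=\lambda a+(x-\lambda a)$ with $(x-\lambda a)\perp a$, then pass to arbitrary elements by directedness and linearity of the two bands; you do exploit that Lemma~\ref{atomic.7} is stated for \emph{all} $x\in X_+$, which lets you skip the paper's Case~1.2, where the decomposition of a general positive element is re-derived via the infimum $x\Inf a$. The real divergence comes afterwards: the paper proves uniqueness by hand, first showing that the decomposition of positive elements is additive (its equation~\eqref{atomic.9.eq2}) and then running a two-case comparability argument (Cases~2.1 and~2.2) to show independence of the representation $x=x^{(1)}-x^{(2)}$; you instead observe that $\B_a$ and $\B_a^{\t{d}}$ are linear subspaces, that $\B_a\cap\B_a^{\t{d}}=\left\{0\right\}$ because $z\perp z$ forces $z=0$ (your unwinding of \eqref{disjoint} is correct), and that uniqueness is then automatic linear algebra: if $b_1+d_1=b_2+d_2$, then $b_1-b_2=d_2-d_1\in\B_a\cap\B_a^{\t{d}}=\left\{0\right\}$. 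This shortcut is sound and makes the bulk of the paper's Case~2 unnecessary; note also that the disjointness $x_a\perp x_d$ recorded in the paper's \eqref{disjontdecomp} costs you nothing, since $x_d\in\B_a^{\t{d}}$ and $x_a\in\B_a$ already imply it. Your identification $\B_a^{\t{d}}=\left\{a\right\}^{\t{d}}$ via the ray description and scaling is fine, and in fact follows even without atomicity from $\B_a=\left\{a\right\}\dd$ together with $\left\{a\right\}^{\t{ddd}}=\left\{a\right\}^{\t{d}}$. One small citation slip: Theorem~\ref{atomic.8} as stated gives only $\I_a=\B_a$; the ray description $\I_a=\left\{\lambda a \mid| \lambda\in\RR\right\}$ is established inside its proof (via Theorem~\ref{atomic.4}\ref{atomic.4.eq4} and Proposition~\ref{prelim.19.2}), but invoking it is harmless in the present pervasive Archimedean setting.
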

\begin{proof}
Throughout the proof we use Theorem~\ref{atomic.8}.
We show that every element $x\in X$ has a unique disjoint decomposition.

\underline{Case 1}: Let $x\in X_+$.

\underline{Case 1.1}: Let $0<a\leq x$. By Lemma~\ref{atomic.7}~\ref{atomic.7.eq1} there exists a real $\lambda >0$ such that $x-\lambda a\geq 0$ and $(x-\lambda a)\perp a$. 
It follows $(x-\lambda a)\perp \lambda a$. That is, $x$ has a disjoint decomposition into positive elements
\begin{equation}\label{atomic.9.eq1}
x = \lambda a + (x-\lambda a) \quad\t{ with } \lambda a \in\left\{a\right\}\dd \t{ and } (x-\lambda a)\in\left\{a\right\}^\t{d}.
\end{equation}
By Lemma~\ref{atomic.7}~\ref{atomic.7.eq2} this decomposition is unique.

\underline{Case 1.2}: Let $x\in X_+$ be arbitrary. If $x\perp a$, then $x\in\left\{a\right\}^\t{d}$ and thus $x$ has the unique disjoint decomposition $x = 0 + x$, where $0\in\left\{a\right\}\dd$ and $x\in\left\{a\right\}^\t{d}$. If $x\not\perp a$, then $i(x)\Inf i(a) \neq 0$. Thus we have $0<i(x)\Inf i(a) \leq i(a)$. Since $a$ is an atom in $X$, by Theorem~\ref{atomic.4}~\ref{atomic.4.eq1} the element $i(a)$ is an atom in $Y$. Hence there exists a real $\lambda >0$ such that $i(x)\Inf i(a) = \lambda i(a)$. By Proposition~\ref{properties.23y}, the infimum $x\Inf a$ exists in $X$, and we have $x\Inf a = \lambda a$ in $X$.
The element $\tilde{a}:=\lambda a$ is an atom in $X$ and we obtain $0< x\Inf a = \tilde{a} \leq x$. By Case 1.1 there exists a unique disjoint decomposition into positive elements as in \eqref{atomic.9.eq1}, i.e.\ there exists $\mu\in\RR$ such that $x = \mu \tilde{a} + (x-\mu \tilde{a})$. Since for some real $\gamma >0$ we have $\mu \tilde{a} = \gamma a$, the element $x$ has a unique disjoint decomposition
$x = \gamma a + (x-\gamma a)$.

\underline{Case 2}: Let $x\in X$. Due to Case 1 we conclude that, since $x$ can be represented as a difference of two positive elements, it has a disjoint decomposition
\begin{equation}\label{disjontdecomp}
x= x_a + x_d \quad\t{ with } x_a\in\left\{a\right\}\dd=\B_a,\hs x_d\in\left\{a\right\}^\t{d},\hs x_a \perp x_d.
\end{equation}
We have to show that this decomposition is unique.

First we show that the decomposition of positive elements is compatible with addition. Indeed, for $y,z\in X_+$ the positive element $y+z$ has a unique decomposition $y+z= (y+z)_a + (y+z)_d$ with $(y+z)_a \in \B_a$ and $(y+z)_d \in \B_a^\t{d}$. Moreover, both $y$ and $z$ have a decomposition $y=y_a +y_d$ and $z = z_a+z_d$, where $y_a, z_a \in \B_a$ and $y_d, z_d \in \B_a^\t{d}$. Therefore the element $y+z$ has a second decomposition $y+z = y_a+z_a + y_d + z_d$. Since the decompositions for positive elements are unique, it follows that the parts of $y+z$ which belong to $\B_a$ coincide, i.e.\ $(y+z)_a = y_a + z_a$, and analogously $(y+z)_d = y_d + z_d$. We conclude that the decomposition of positive elements is compatible with addition, i.e.\ for $y,z \in X_+$ we have
\begin{equation}\label{atomic.9.eq2}
(y+z)_a = y_a + z_a \quad\t{ and }\quad (y+z)_d = y_d + z_d.
\end{equation}

Finally, we establish the uniqueness of the disjoint decomposition in \eqref{disjontdecomp}.
Let $x\in X$. Since $X$ is directed, $x$ can be written as a difference of two positive elements.

\underline{Case 2.1}: Let $x=x^{(1)}-x^{(2)}=x^{(3)}-x^{(4)}$  with $x^{(1)}, x^{(2)}, x^{(3)}, x^{(4)} \geq 0$ and assume that $x^{(1)}$ and $x^{(3)}$ are comparable, e.g.\ $x^{(1)}>x^{(3)}$. Then with $\tilde{x}:=x^{(1)}-x^{(3)}>0$ we have
\begin{align*}
x^{(2)} \hs &=\hs x^{(1)} - x^{(1)} + x^{(2)} \hs=\hs x^{(1)} -x \\
\hs &=\hs (x^{(1)} -\tilde{x}) - x +\tilde{x} \hs=\hs x^{(3)} -x +\tilde{x} = x^{(4)} + \tilde{x}.
\end{align*}
This and the definition of $\tilde{x}$ yield
\begin{equation}\label{atomic.9.eq3}
\quad x^{(1)} = x^{(3)} + \tilde{x} \quad\t{ and }\quad x^{(2)} = x^{(4)} + \tilde{x}.
\end{equation}
Moreover, for the positive elements $x^{(1)}, x^{(2)}, x^{(3)}, x^{(4)}$ and $\tilde{x}$ by Case 1 we have unique disjoint decompositions, i.e.\ $x^{(1)}= x^{(1)}_a + x^{(1)}_d$ with $x^{(1)}_a\in \B_a$ and $x^{(1)}_d\in \B_a^\t{d}$, etc. Since the decomposition is compatible with addition, as in \eqref{atomic.9.eq2}, from \eqref{atomic.9.eq3} it follows
\[ x^{(1)}_a - x^{(2)}_a \hs=\hs (x^{(3)} + \tilde{x})_a - (x^{(4)} + \tilde{x})_a \hs=\hs x^{(3)}_a + \tilde{x}_a - x^{(4)}_a - \tilde{x}_a \hs=\hs x^{(3)}_a-x^{(4)}_a,\]
i.e.\ $x_a:=x^{(1)}_a - x^{(2)}_a=x^{(3)}_a-x^{(4)}_a$ is independent of the choice of representatives. Analogously it follows $x_d:=x^{(1)}_d-x^{(2)}_d = x^{(3)}_d-x^{(4)}_d$. Thus the element $x=x_a+x_d$ has a unique disjoint decomposition with $x_a\in\B_a$ and $x_d\in \B_a^\t{d}$.

\underline{Case 2.2}: Let there be two decompositions of $x\in X$ as $x=x^{(3)}-x^{(4)} = x^{(5)}-x^{(6)}$, with $x^{(3)}$ and $x^{(5)}$ not comparable. Since $X$ is directed, there exists an element $x^{(1)}\in X$ such that $x^{(1)} \geq x^{(3)},x^{(5)}, x$. The element $x$ has a decomposition $x=x^{(1)}-(x^{(1)}+x)$ with $x^{(1)}\geq 0$ and $x^{(2)}:=x^{(1)}-x\geq 0$. Moreover, we have $x^{(1)}>x^{(3)}$. By Case 2.1 the disjoint decomposition of $x$ as $x=x_a+x_d$ is independent of the choice of representatives with respect to $x=x^{(1)}-x^{(2)} = x^{(3)}-x^{(4)}$. Similarly, due to $x^{(1)}>x^{(5)}$ it is independent of the choice of representatives with respect to $x=x^{(1)}-x^{(2)} = x^{(5)}-x^{(6)}$. Hence the decomposition is independent with respect to $x=x^{(3)}-x^{(4)} = x^{(5)}-x^{(6)}$.

Altogether, we established that for every $x\in X$ there exists a unique (disjoint) decomposition
\begin{equation*}
x = x_a +x_d \quad\t{ with } x_a\in\B_a \t{ and } x_d\in\B_a^\t{d},
\end{equation*}
that is, $X = \B_a\oplus \B_a^\t{d}$.
\end{proof}

As a consequence of Theorem~\ref{Glueck}\ref{Glueck.it1} and Theorem~\ref{atomic.9} we obtain the following.
\begin{corollary}\label{atomic.10}
Let $X$ be an Archimedean pervasive pre-Riesz space and $a\in \mathcal{A}_X$. Then there is an order projection $P\colon X\rightarrow X$ with range $\B_a$.
\end{corollary}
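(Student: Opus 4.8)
The plan is to read off the statement directly from the two cited results, since the genuine work has already been done in Theorem~\ref{atomic.9}. First I would apply Theorem~\ref{atomic.9} to the atom $a\in\mathcal{A}_X$, obtaining the decomposition $X=\B_a\oplus\B_a^{\t{d}}$. As $\B_a$ is by definition the band generated by $a$ and hence a band in $X$, this decomposition says precisely that $\B_a$ is a projection band in the sense recalled at the beginning of Section~\ref{projectionbands0}.

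Next, having a projection band $\B_a$ at hand, I would invoke the construction from Section~\ref{projectionbands0}: every $x\in X$ admits a unique decomposition $x=x_1+x_2$ with $x_1\in\B_a$ and $x_2\in\B_a^{\t{d}}$, so the band projection $P_{\B_a}\colon X\rightarrow X$, $x\mapsto x_1$, is well-defined, and its range is exactly $\B_a$.

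Finally, I would apply Theorem~\ref{Glueck}\ref{Glueck.it1}, which asserts that an operator on $X$ is an order projection if and only if it is a band projection. Applying this equivalence to $P_{\B_a}$ shows that $P:=P_{\B_a}$ is an order projection with range $\B_a$, which is the assertion. There is no real obstacle here: the substantive content of the corollary is the projection-band decomposition supplied by Theorem~\ref{atomic.9}, and the present statement merely reformulates it in the language of order projections by means of Theorem~\ref{Glueck}\ref{Glueck.it1}.
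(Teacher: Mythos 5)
Your proposal is correct and matches the paper exactly: the paper derives Corollary~\ref{atomic.10} as an immediate consequence of Theorem~\ref{atomic.9} (which supplies the decomposition $X=\B_a\oplus\B_a^{\t{d}}$, making $\B_a$ a projection band) combined with Theorem~\ref{Glueck}\ref{Glueck.it1} (band projections are precisely order projections). Nothing further is needed.
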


The following example demonstrates that the conclusion of Corollary~\ref{atomic.10} is not true, in general, if the pre-Riesz space is not pervasive.
\begin{example}\label{projection_on_ideal}
\emph{An Archimedean pre-Riesz space with atoms and only trivial order projections.}

We return to Example~\ref{atomic.4u}, where we considered the pre-Riesz space $X = \RR^3$ endowed with the four-ray cone $K_4$. Recall that $\mathcal{A}_X=\left\{\lambda v_k \mid| \lambda\in\RR_{>0} \t{ and } v_k\in\left\{v_1,\ldots,v_4\right\}\right\}$ 
and that by Theorem~\ref{Glueck}\ref{Glueck.it1} order projections and band projections on $X$ coincide. Due to Theorem~\ref{Glueck}\ref{Glueck.it2} it is sufficient to consider only projections onto directed bands.
In \c[Example~4.6]{1} it is shown that $X$ has precisely four non-trivial directed bands. They are given by $B_1:=\left\{v_1\right\}\dd \hspace*{-.5mm}=\hspace*{-.2mm} \Span\left\{v_1\right\}$, $B_2:=\Span\left\{v_2\right\}$, $B_3:=\Span\left\{v_3\right\}$ and $B_4:=\Span\left\{v_4\right\}$. 
We show that the projections onto the bands $B_1$ to $B_4$ are trivial. First, let $P$ be a projection onto $B_1$. For every $k\in\left\{2,3,4\right\}$ we have
\begin{equation}\label{projection_on_ideal.eq1}
[0,v_k]=K\cap(v_k-K)=\left\{x\in B_k\mid| 0\leq x\leq v_k\right\}\sub B_k.
\end{equation}
Due to $P(X)\sub B_1$ and $0\leq P\leq I$, where $I$ is the identity map on $X$, the equation \eqref{projection_on_ideal.eq1} implies for $k\in\left\{2,3,4\right\}$ that $P(v_k)\in B_1\cap[0,v_k]\sub B_1\cap B_k =\left\{0\right\}$, i.e.\ $P(v_k)=0$. As the vectors $v_2,v_3$ and $v_4$ form a base of $X$, it follows $P=0$. Analogously, the band projection onto any of the bands $B_2,B_3$ and $B_4$ is trivial. We conclude that $X$ has only the trivial order projections $0$ and $I$.
\end{example}

To sum up, we obtain for Archimedean pervasive pre-Riesz spaces the extension property for projection bands and a theory of bands generated by atoms similar to the vector lattice case. As one can see in Theorems~\ref{extideal_extband_coincide}, \ref{atomic.4} and \ref{atomic.8}, pervasiveness turns out to be the key condition in these results.


\bibliographystyle{plain}

\end{document}